\numberwithin{equation}{section}
\newtheorem{remark}{Remark}[section]
\title{A novel Lagrange Multiplier approach with relaxation for gradient flows.
        \thanks{
We would like to acknowledge the assistance of volunteers in putting together this example manuscript and supplement. This work is supported by National Natural Science Foundation of China (Grant Nos: 12001336, 11901489, 12131014).}}
      \author{Zhengguang Liu
             \thanks{School of Mathematics and Statistics, Shandong Normal University, Jinan, China. Email: liuzhg@sdnu.edu.cn.}
                                       \and
             Xiaoli Li\textsuperscript{*}
             \thanks{Shandong University, Jinan, Shandong, 250100, China. Email: xiaomath@sdu.edu.cn}. }
\begin{document}
\UseRawInputEncoding
\maketitle

\begin{abstract}
In this paper, we propose a novel Lagrange Multiplier approach, named zero-factor (ZF) approach to solve a series of gradient flow problems. The numerical schemes based on the new algorithm are unconditionally energy stable with the original energy and do not require any extra assumption conditions. We also prove that the ZF schemes with specific zero factors lead to the popular SAV-type method. To reduce the computation cost and improve the accuracy and consistency, we propose a zero-factor approach with relaxation, which we named the relaxed zero-factor (RZF) method, to design unconditional energy stable schemes for gradient flows. The RZF schemes can be proved to be unconditionally energy stable with respect to a modified energy that is closer to the original energy, and provide a very simple calculation process. The variation of the introduced zero factor is highly consistent with the nonlinear free energy which implies that the introduced ZF method is a very efficient way to capture the sharp dissipation of nonlinear free energy. Several numerical examples are provided to demonstrate the improved efficiency and accuracy of the proposed method.
\end{abstract}

\begin{keywords}
Lagrange Multiplier approach, Zero-factor approach, Gradient flows, Relaxation, Energy stable, Numerical examples.
\end{keywords}

    \begin{AMS}
         65M12; 35K20; 35K35; 35K55; 65Z05
    \end{AMS}

\pagestyle{myheadings}
\thispagestyle{plain}
\markboth{ZHENGGUANG LIU AND XIAOLI LI} {RELAXED ZERO-FACTOR APPROACH FOR GRADIENT FLOWS}
  \section{Introduction}
Gradient flows are a kind of important models to simulate many physical problems such as the interface behavior of multi-phase materials, the interface problems of fluid mechanics, environmental science and material mechanics. In general, as the highly complex high-order nonlinear dissipative systems, it is a great challenge to construct effective and accurate numerical schemes with physical constraints such as energy dissipation and mass conservation. Many experts and scholars considered some unconditionally energy stable schemes. These numerical schemes preserve the energy dissipation law which does not depend on the time step. Some popular and widely used methods include convex splitting approach \cite{eyre1998unconditionally,shen2012second,shin2016first}, linear stabilized approach \cite{shen2010numerical,yang2017numerical}, exponential time differencing (ETD) approach \cite{du2019maximum,du2021maximum,WangEfficient}, invariant energy quadratization (IEQ) approach \cite{gong2020arbitrarily,yang2016linear,yang2018efficient,zhao2021revisit}, scalar auxiliary variable (SAV) approach \cite{xiaoli2019energy,shen2018scalar,ShenA},  Lagrange multiplier approach \cite{cheng2020new} and so on.

Gradient flow models are generally derived from the functional variation of free energy. In general, the free energy $E(\phi)$ contains the sum of an integral phase of a nonlinear functional and a quadratic term:
\begin{equation}\label{intro-e1}
E(\phi)=\frac12(\phi,\mathcal{L}\phi)+E_1(\phi)=\frac12(\phi,\mathcal{L}\phi)+\int_\Omega F(\phi)d\textbf{x},
\end{equation}
where $\mathcal{L}$ is a symmetric non-negative linear operator, and $E_1(\phi)=\int_\Omega F(\phi)d\textbf{x}$ is nonlinear free energy. $F(\textbf{x})$ is the energy density function. The gradient flow from the energetic variation of the above energy functional $E(\phi)$ in \eqref{intro-e1} can be obtained as follows:
\begin{equation}\label{intro-e2}
\displaystyle\frac{\partial \phi}{\partial t}=-\mathcal{G}\mu,\quad\mu=\displaystyle\mathcal{L}\phi+F'(\phi),
\end{equation}
where $\mu=\frac{\delta E}{\delta \phi}$ is the chemical potential. $\mathcal{G}$ is a positive operator. For example, $\mathcal{G}=I$ for the $L^2$ gradient flow and $\mathcal{G}=-\Delta$ for the $H^{-1}$ gradient flow.

It is not difficult to find that the above phase field system satisfies the following energy dissipation law:
\begin{equation*}
\frac{d}{dt}E=(\frac{\delta E}{\delta \phi},\frac{\partial\phi}{\partial t})=-(\mathcal{G}\mu,\mu)\leq0,
\end{equation*}
which is a very important property for gradient flows in physics and mathematics.

Recently, many SAV-type methods are developed to optimize the traditional SAV method. For example, in \cite{yang2020roadmap}, the authors introduced the generalized auxiliary variable method for devising energy stable schemes for general dissipative systems. An exponential SAV approach in \cite{liu2020exponential} is developed to modify the traditional method to construct energy stable schemes by introducing an exponential SAV. In \cite{huang2020highly}, the authors consider a new SAV approach to construct high-order energy stable schemes. In \cite{cheng2020new}, the authors introduce a new Lagrange multiplier approach which is unconditionally energy stable with the original energy. However, the new approach requires solving a nonlinear algebraic equation for the Lagrange multiplier which brings some additional costs and theoretical difficulties for its analysis. Recently, Jiang et al. \cite{jiang2022improving} present a relaxation technique to construct a relaxed SAV (RSAV) approach to improve the accuracy and consistency noticeably.

In this paper, inspired by the new Lagrange multiplier approach and RSAV approach, we propose a novel technique to construct the unconditional energy stable schemes for gradient flows by introducing a zero factor. Compared with the recently proposed SAV-type approach, the numerical schemes based on the new zero-factor (ZF) method dissipate the original energy and do not require the explicitly treated part of the free energy to be bounded from below. The core idea of the zero-factor approach is to introduce a zero factor to modify the solution $\overline{\phi}^{n+1}$ of the baseline semi-implicit method at each time step. The value of the introduced zero factor $\mathcal{P}(\eta)$ is controlled by energy stability. To reduce the computation cost and improve the accuracy and consistency, we propose a zero-factor approach with relaxation, which we named the relaxed zero-factor (RZF) method, to design unconditional energy stable schemes for gradient flows. The RZF approach almost preserves all the advantages of the new zero-factor approach. It is unconditionally energy stable with respect to a modified energy that is closer to the original energy, and provides a very simple calculation process. Our main contributions of this paper are:

(i). The new introduced RZF method can keep the original energy in most cases and provides a very simple calculation process;

(ii). We prove that the zero factor schemes with specific $\mathcal{P}(\eta)$ lead to the popular SAV-type and Lagrange multiplier methods;

(iii). The variation of the introduced zero factor is highly consistent with the nonlinear free energy which implies that the introduced zero factor is very efficient to capture the sharp dissipation of the nonlinear free energy.

The paper is organized as follows. In Sect.2, we introduce a zero factor to construct a new zero-factor approach to simulate a series of gradient flows. In Sect.3, by using a relaxation technique, we propose a relaxed ZF approach. Then the second-order Crank-Nicloson and BDF2 schemes based on RZF method are constructed. In Sect.4, we briefly illustrate that the RZF approach can be easily applied to simulate the gradient flow with several disparate nonlinear terms. Finally, in Sect.5, various 2D and 3D numerical simulations are demonstrated to verify the accuracy and efficiency of our proposed schemes.

\section{The Zero-Factor Approach}
Introduce a scalar auxiliary function $\eta(t)$ to construct a linear function $\mathcal{P}(\eta)$, and rewrite the gradient flow \eqref{intro-e2} with a zero factor $\mathcal{P}(\eta)$ as follows:
\begin{equation}\label{ZF-e1}
   \begin{array}{l}
\displaystyle\frac{\partial \phi}{\partial t}=-\mathcal{G}\mu,\\
\mu=\mathcal{L}\phi+F'(\phi)+\mathcal{P}(\eta)F'(\phi),\\
\displaystyle\frac{d}{dt}\int_\Omega F(\phi)d\textbf{x}=\displaystyle\int_\Omega F'(\phi)\phi_td\textbf{x}+\mathcal{P}(\eta)\int_\Omega F'(\phi)\phi_td\textbf{x}.
   \end{array}
  \end{equation}
Here the zero factor $\mathcal{P}(\eta)$ is a linear zero function which can be chosen flexibly, such as the following $\mathcal{P}_1$ and $\mathcal{P}_2$
\begin{equation}\label{ZF-e2}
\mathcal{P}_1(\eta)=k_1\eta,\quad\mathcal{P}_2(\eta)=k_2\eta_t,
\end{equation}
where $k_1$ and $k_2$ are any non-zero constants.

Set the initial condition for $\eta(t)$ to be $\eta(0)=0$ for $\mathcal{P}_1(\eta)$ or $\eta(0)=c_0$ for $\mathcal{P}_2(\eta)$ where $c_0$ is an arbitrary constant, then it is easy to see that the new system \eqref{ZF-e1} is equivalent to the original system \eqref{intro-e2}, i.e., $\mathcal{P}(\eta)=0$ in \eqref{ZF-e1}.

Taking the inner products of the first two equations in the above equivalent system \eqref{ZF-e1} with $\mu$ and $-\phi_t$, respectively, then summing up the results together with the third equation, we obtain the original energy dissipative law:
\begin{equation*}
\frac{d}{dt}E=-(\mathcal{G}\mu,\mu)\leq0,
\end{equation*}
It means that the linear functional $\mathcal{P}(\eta)$ here is to serve as a zero factor to enforce dissipation of the original energy.

\subsection{A second-order Crank-Nicloson ZF scheme}
Before giving a detailed introduction, we let $N>0$ be a positive integer and set
\begin{equation*}
\Delta t=T/N,\quad t^n=n\Delta t,\quad \text{for}\quad n\leq N.
\end{equation*}

In the following, we will consider a second-order Crank-Nicolson scheme for the system \eqref{ZF-e1}. Discretize the nonlinear functional $F'(\phi)$ explicitly and the other items implicitly in \eqref{ZF-e1}, and give the initial values $\phi^0=\phi_0(x)$, $\eta(0)=c_0$, then couple with Crank-Nicolson formula, a second-order energy stable schemes can be constructed as follows:
\begin{equation}\label{ZF-e3}
   \begin{array}{l}
\displaystyle\frac{\phi^{n+1}-\phi^n}{\Delta t}=-\mathcal{G}\mu^{n+\frac12},\\
\displaystyle\mu^{n+\frac12}=\frac12\mathcal{L}\phi^{n+1}+\frac12\mathcal{L}\phi^{n}+F'(\widehat{\phi}^{n+\frac12})+\mathcal{P}(\eta^{n+\frac12})F'(\widehat{\phi}^{n+\frac12}),\\
\displaystyle(F(\phi^{n+1}),1)-(F(\phi^{n}),1)=\displaystyle\left(F'(\widehat{\phi}^{n+\frac12}),\phi^{n+1}-\phi^n\right)+\mathcal{P}(\eta^{n+\frac12})\left(F'(\widehat{\phi}^{n+\frac12}),\phi^{n+1}-\phi^n\right),
   \end{array}
  \end{equation}
where $\widehat{\phi}^{n+\frac12}=\frac32\phi^n-\frac12\phi^{n-1}$.

Taking the inner products of first two equation in \eqref{ZF-e3} with $\mu^{n+\frac12}$ and $-\frac{\phi^{n+1}-\phi^n}{\Delta t}$ respectively,
and multiplying the third equation with $\Delta t$, then combining these equations, we obtain the above Crank-Nicolson scheme satisfies the following original energy dissipative law:
\begin{equation}\label{ZF-e4}
\mathcal{E}(\phi^{n+1})-\mathcal{E}(\phi^{n})=-\Delta t(\mathcal{G}\mu^{n+\frac12},\mu^{n+\frac12})\leq0,
\end{equation}
where $\mathcal{E}(\phi^{n})=\frac12(\mathcal{L}\phi^{n},\phi^n)+(F(\phi^{n}),1)$.

The Crank-Nicolson scheme \eqref{ZF-e3} is nonlinear for the variables $\phi^{n+1}$ and $\eta^{n+1}$. We now show how to solve it efficiently. Combining the first two equations in \eqref{ZF-e3}, we can obtain the following linear matrix equation
\begin{equation*}
\aligned
(I+\frac12\Delta t\mathcal{G}\mathcal{L})\phi^{n+1}=(I-\frac12\Delta t\mathcal{G}\mathcal{L})\phi^{n}-\Delta t\mathcal{G}F'(\widehat{\phi}^{n+\frac12})-\mathcal{P}(\eta^{n+\frac12})\Delta t\mathcal{G}F'(\widehat{\phi}^{n+\frac12}).
\endaligned
\end{equation*}

Noting that the coefficient matrix $A=(I+\frac12\Delta t\mathcal{G}\mathcal{L})$ is a symmetric positive matrix, then we obtain
\begin{equation}\label{ZF-e5}
\aligned
\phi^{n+1}
&=A^{-1}\left[(I-\frac12\Delta t\mathcal{G}\mathcal{L})\phi^{n}-\Delta t\mathcal{G}F'(\widehat{\phi}^{n+\frac12})\right]-\mathcal{P}(\eta^{n+\frac12})\Delta tA^{-1}\mathcal{G}F'(\widehat{\phi}^{n+\frac12})\\
&=\overline{\phi}^{n+1}+\mathcal{P}(\eta^{n+\frac12})q^{n+1},
\endaligned
\end{equation}
Here $\overline{\phi}^{n+1}$ and $q^{n+1}$ can be solved directly by $\phi^n$ and $\widehat{\phi}^{n+\frac12}$ as follows:
\begin{equation}\label{ZF-e6}
\aligned
\overline{\phi}^{n+1}=A^{-1}\left[(I-\frac12\Delta t\mathcal{G}\mathcal{L})\phi^{n}-\Delta t\mathcal{G}F'(\widehat{\phi}^{n+\frac12})\right],\quad q^{n+1}=-\Delta tA^{-1}\mathcal{G}F'(\widehat{\phi}^{n+\frac12}).
\endaligned
\end{equation}
Combining the equation \eqref{ZF-e5} with the third equation in \eqref{ZF-e3}, we have
\begin{equation}\label{ZF-e7}
\aligned
&\left(F(\overline{\phi}^{n+1}+\mathcal{P}(\eta^{n+\frac12})q^{n+1}),1\right)-\left(F(\phi^{n}),1\right)\\
&=\displaystyle
\left[1+\mathcal{P}(\eta^{n+\frac12})\right]\left({F'}(\widehat{\phi}^{n+\frac12}),p^{n+1}+\mathcal{P}(\eta^{n+\frac12})q^{n+1}-\phi^n\right).
\endaligned
\end{equation}
One can see that to solve above nonlinear numerical scheme \eqref{ZF-e7}, we need to solve $\eta^{n+1}$ by the Newton iteration as the initial condition. The computational complexity depends on $F(\phi)$. The computational cost is equal to the Lagrange Multiplier approach which was proposed by Shen et al. \cite{cheng2020new}.\\
\begin{remark}\label{ZF-le1}
In principle one can choose any linear function to be zero factor $\mathcal{P}(\eta)$ in equation \eqref{ZF-e1}. A special case is $\mathcal{P}(\eta)=\eta(t)-1$, then the zero factor method leads to the new Lagrange multiplier approach in \cite{cheng2020new}.
\end{remark}\\
\begin{remark}\label{ZF-le2}
From the equation \eqref{ZF-e5}, we notice that $\overline{\phi}^{n+1}$ is the solution of the baseline semi-implicit Crank-Nicolson scheme. Hence the core idea of the zero factor approach is to introduce a zero factor to modify the solution $\overline{\phi}^{n+1}$ at each time step. The value of the zero factor is controlled by energy stability.
\end{remark}
\subsection{A revisit of the SAV-type approach}
In this subsection, we will review the SAV-type approach and prove that the introduced scalar auxiliary variables can be seen as the specific zero factors. Furthermore, we can modify the SAV-type methods to construct new schemes which dissipate the original energy.

The key for the SAV approach is to introduce a scalar variable $r(t)=\sqrt{E_(\phi)+C}$ where $E_1(\phi)=(F(\phi),1)$ is the nonlinear free energy and rewrite the gradient flows \eqref{intro-e2} as the following equivalent system:
\begin{equation}\label{SAV-e1}
   \begin{array}{l}
\displaystyle\frac{\partial \phi}{\partial t}=-\mathcal{G}\mu,\\
\displaystyle\mu=\mathcal{L}\phi+\frac{r(t)}{\sqrt{E_1(\phi)+C}}F'(\phi),\\
\displaystyle\frac{dr}{dt}=\frac{1}{2\sqrt{E_1(\phi)+C}}({F'}(\phi),\phi_t).
   \end{array}
\end{equation}
A second-order Crank-Nicloson SAV scheme for above equivalent system is as follows:
\begin{equation}\label{SAV-e2}
   \begin{array}{l}
\displaystyle\frac{\phi^{n+1}-\phi^n}{\Delta t}=-\mathcal{G}\mu^{n+\frac12},\\
\displaystyle\mu^{n+\frac12}=\frac12\mathcal{L}\phi^{n+1}+\frac12\mathcal{L}\phi^{n}+\frac{r^{n+\frac12}}{\sqrt{E_1(\widehat{\phi}^{n+\frac12})+C}}F'(\widehat{\phi}^{n+\frac12}),\\
\displaystyle\frac{r^{n+1}-r^n}{\Delta t}=\frac{1}{2\sqrt{E_1(\widehat{\phi}^{n+\frac12})+C}}({F'}(\widehat{\phi}^{n+\frac12}),\frac{\phi^{n+1}-\phi^n}{\Delta t}).
   \end{array}
\end{equation}
Here $r^{n+\frac12}=(r^{n+1}+r^n)/2$.

Combining the first two equations in above second-order scheme, we can obtain:
\begin{equation}\label{SAV-e3}
\aligned
(I+\frac12\Delta t\mathcal{G}\mathcal{L})\phi^{n+1}
&=(I-\frac12\Delta t\mathcal{G}\mathcal{L})\phi^n-\Delta t\frac{r^{n+\frac12}}{\sqrt{E_1(\widehat{\phi}^{n+\frac12})+C}}\mathcal{G}F'(\widehat{\phi}^{n+\frac12})\\
&=\left[(I-\frac12\Delta t\mathcal{G}\mathcal{L})-\Delta t\mathcal{G}F'(\widehat{\phi}^{n+\frac12})\right]-\left(\frac{r^{n+\frac12}}{\sqrt{E_1(\widehat{\phi}^{n+\frac12})+C}}-1\right)\Delta t\mathcal{G}F'(\widehat{\phi}^{n+\frac12}).
\endaligned
\end{equation}
Using the same definitions of $\overline{\phi}^{n+1}$ and $q^{n+1}$ in \eqref{ZF-e6}, we can obtain $\phi^{n+1}$ as follows:
\begin{equation}\label{SAV-e4}
\aligned
\phi^{n+1}
&=A^{-1}\left[(I-\frac12\Delta t\mathcal{G}\mathcal{L})\phi^{n}-\Delta t\mathcal{G}F'(\widehat{\phi}^{n+\frac12})\right]-\left(\frac{r^{n+\frac12}}{\sqrt{E_1(\widehat{\phi}^{n+\frac12})+C}}-1\right)\Delta tA^{-1}\mathcal{G}F'(\widehat{\phi}^{n+\frac12})\\
&=\overline{\phi}^{n+1}+\left(\frac{r^{n+\frac12}}{\sqrt{E_1(\widehat{\phi}^{n+\frac12})+C}}-1\right)q^{n+1}.
\endaligned
\end{equation}
Compared above equation \eqref{SAV-e4} with \eqref{ZF-e6}, we can obviously obtain that the key for the SAV approach is to introduce a zero factor
\begin{equation}\label{SAV-e5}
\aligned
P(r)=\frac{r(t)}{\sqrt{E_1(\phi)+C}}-1.
\endaligned
\end{equation}
It means that the core idea of the SAV scheme \eqref{SAV-e2} is also to introduce a special zero factor to modify the solution $\overline{\phi}^{n+1}$ which is the solution of the baseline semi-implicit Crank-Nicolson scheme at each time step. The value of the zero factor $P(r)$ is controlled by energy stability.

Inspired by the introduced ZF method, we can obtain a new SAV approach which is unconditionally energy stable with the original energy by changing the third equation in the equivalent system \eqref{SAV-e1}:
\begin{equation}\label{SAV-e6}
   \begin{array}{l}
\displaystyle\frac{\partial \phi}{\partial t}=-\mathcal{G}\mu,\\
\displaystyle\mu=\mathcal{L}\phi+\frac{r(t)}{\sqrt{E_1(\phi)+C}}F'(\phi),\\
\displaystyle\frac{d}{dt}\int_\Omega F(\phi)d\textbf{x}=\displaystyle\frac{r(t)}{\sqrt{E_1(\phi)+C}}\int_\Omega F'(\phi)\phi_td\textbf{x}.
   \end{array}
\end{equation}
A second-order Crank-Nicloson SAV scheme for above equivalent system \eqref{SAV-e6} is as follows:
\begin{equation}\label{SAV-e7}
   \begin{array}{l}
\displaystyle\frac{\phi^{n+1}-\phi^n}{\Delta t}=-\mathcal{G}\mu^{n+\frac12},\\
\displaystyle\mu^{n+\frac12}=\frac12\mathcal{L}\phi^{n+1}+\frac12\mathcal{L}\phi^{n}+\frac{r^{n+\frac12}}{\sqrt{E_1(\widehat{\phi}^{n+\frac12})+C}}F'(\widehat{\phi}^{n+\frac12}),\\
\displaystyle(F(\phi^{n+1}),1)-(F(\phi^{n}),1)=\frac{r^{n+\frac12}}{\sqrt{E_1(\widehat{\phi}^{n+\frac12})+C}}({F'}(\widehat{\phi}^{n+\frac12}),\phi^{n+1}-\phi^n).
   \end{array}
\end{equation}
Taking the inner products of first two equation in \eqref{SAV-e7} with $\mu^{n+\frac12}$ and $-\frac{\phi^{n+1}-\phi^n}{\Delta t}$ respectively,
and multiplying the third equation with $\Delta t$, then combining these equations, we obtain the above Crank-Nicolson scheme satisfies the following original energy dissipative law:
\begin{equation}\label{SAV-e8}
\mathcal{E}(\phi^{n+1})-\mathcal{E}(\phi^{n})=-\Delta t(\mathcal{G}\mu^{n+\frac12},\mu^{n+\frac12})\leq0,
\end{equation}
where $\mathcal{E}(\phi^{n})=\frac12(\mathcal{L}\phi^{n},\phi^n)+(F(\phi^{n}),1)$.\\
\begin{remark}\label{SAV-le1}
For other SAV-type approaches, the core idea is also to introduce a special zero factor to modify the solution $\overline{\phi}^{n+1}$. For example, the zero factor $\mathcal{P}(r)=\frac{r(t)}{\exp(E_1(\phi))}-1$ for ESAV approach in \cite{liu2020exponential}.
\end{remark}
\section{The Relaxed Zero-Factor Approach}
From above analysis, we notice that the scheme based on the zero-factor approach dissipates the original energy but needs to solve a nonlinear algebraic equation which brings some additional costs and theoretical difficulties for its analysis. In general, compared with the baseline SAV scheme, the new algorithm brings some additional costs because it requires solving a nonlinear algebraic equation for $\eta^{n+1}$. To reduce the computation cost and improve the efficiency, inspired by the R-SAV approach described in \cite{jiang2022improving}, we consider a zero-factor approach with relaxation, which we named the relaxed zero-factor (RZF) method, to design unconditional energy stable schemes for gradient flows. It can be proved that the RZF approach not only determines $\eta^{n+1}$ explicitly, but also dissipates an almost original energy.

Firstly, we introduce a new scalar auxiliary function $R(t)=(F(\phi),1)$, and rewrite the equivalent gradient flow \eqref{ZF-e1} as follows:
\begin{equation}\label{RZF-e1}
   \begin{array}{l}
\displaystyle\frac{\partial \phi}{\partial t}=-\mathcal{G}\mu,\\
\mu=\mathcal{L}\phi+F'(\phi)+\mathcal{P}(\eta)F'(\phi),\\
\displaystyle\frac{dR}{dt}=\displaystyle\int_\Omega F'(\phi)\phi_td\textbf{x}+\mathcal{P}(\eta)\int_\Omega F'(\phi)\phi_td\textbf{x},\\
R(t)=(F(\phi),1).
   \end{array}
\end{equation}
Taking the inner products of the first two equations in the above equivalent system \eqref{RZF-e1} with $\mu$ and $-\phi_t$, respectively, then summing up the results together with the third and fourth equations, we obtain the original energy dissipative law:
\begin{equation}\label{RZF-e2}
\frac{d}{dt}E=\frac12(\phi,\mathcal{L}\phi)+\int_\Omega F(\phi)d\textbf{x}=\frac12(\phi,\mathcal{L}\phi)+R(t)=-(\mathcal{G}\mu,\mu)\leq0,
\end{equation}

For the equivalent system \eqref{RZF-e1}, a Crank-Nicolson scheme based on above zero-factor (ZF-CN) approach can be given as follows:
\begin{equation}\label{ZF-CN-e1}
   \begin{array}{l}
\displaystyle\frac{\phi^{n+1}-\phi^n}{\Delta t}=-\mathcal{G}\mu^{n+\frac12},\\
\displaystyle\mu^{n+\frac12}=\frac12\mathcal{L}\phi^{n+1}+\frac12\mathcal{L}\phi^{n}+F'(\widehat{\phi}^{n+\frac12})+\mathcal{P}(\eta^{n+\frac12})F'(\widehat{\phi}^{n+\frac12}),\\
\displaystyle R^{n+1}-R^n=\displaystyle\left[1+\mathcal{P}(\eta^{n+\frac12})\right]\left(F'(\widehat{\phi}^{n+\frac12}),\phi^{n+1}-\phi^n\right),\\
\displaystyle R^{n+1}=\left(F(\phi^{n+1}),1\right).
   \end{array}
\end{equation}
Similar as \eqref{ZF-e6}, introduce $\overline{\phi}^{n+1}$ and $q^{n+1}$ as follows:
\begin{equation}\label{ZF-CN-e2}
\aligned
\overline{\phi}^{n+1}=A^{-1}\left[(I-\frac12\Delta t\mathcal{G}\mathcal{L})\phi^{n}-\Delta t\mathcal{G}F'(\widehat{\phi}^{n+\frac12})\right],\quad q^{n+1}=-\Delta tA^{-1}\mathcal{G}F'(\widehat{\phi}^{n+\frac12}),
\endaligned
\end{equation}
where $A$ is the coefficient matrix to satisfy $A=(I+\frac12\Delta t\mathcal{G}\mathcal{L})$. The baseline ZF-CN scheme \eqref{ZF-CN-e1} can be rewrite as follows:
\begin{equation}\label{ZF-CN-e3}
   \begin{array}{l}
\displaystyle\frac{\overline{\phi}^{n+1}-\phi^n}{\Delta t}=-\mathcal{G}\mu^{n+\frac12},\\
\displaystyle\mu^{n+\frac12}=\frac12\mathcal{L}\overline{\phi}^{n+1}+\frac12\mathcal{L}\phi^{n}+F'(\widehat{\phi}^{n+\frac12}),\\
\displaystyle\phi^{n+1}=\overline{\phi}^{n+1}+\mathcal{P}(\eta^{n+\frac12})q^{n+1},\\
\displaystyle R^{n+1}-R^n=\displaystyle\left[1+\mathcal{P}(\eta^{n+\frac12})\right]\left(F'(\widehat{\phi}^{n+\frac12}),\phi^{n+1}-\phi^n\right),\\
\displaystyle R^{n+1}=\left(F(\phi^{n+1}),1\right).
   \end{array}
\end{equation}
It is not difficult to obtain that the above ZF-CN scheme \eqref{ZF-CN-e3} is nonlinear for the variables $\phi^{n+1}$ and $\eta^{n+1}$.
\subsection{A Second-order RZF-CN Scheme}
To reduce the computation cost and improve the efficiency, we consider the following second order Crank-Nicolson scheme based on the relaxed zero-factor approach (RZF-CN): set $R^0=(F(\phi(0,\textbf{x}),1)$, and compute $\phi^{n+1}$, $R^{n+1}$ via the following two steps:

\textbf{Step I}: Compute $\phi^{n+1}$ and $\widetilde{R}^{n+1}$ by the following semi-implicit Crank-Nicolson scheme:
\begin{equation}\label{RZF-CN-e1}
   \begin{array}{l}
\displaystyle\frac{\overline{\phi}^{n+1}-\phi^n}{\Delta t}=-\mathcal{G}\mu^{n+\frac12},\\
\displaystyle\mu^{n+\frac12}=\frac12\mathcal{L}\overline{\phi}^{n+1}+\frac12\mathcal{L}\phi^{n}+F'(\widehat{\phi}^{n+\frac12}),\\
\displaystyle\phi^{n+1}=\overline{\phi}^{n+1}+\mathcal{P}(\eta^{n+\frac12})q^{n+1},\\
\displaystyle \widetilde{R}^{n+1}-R^n=\displaystyle\left[1+\mathcal{P}(\eta^{n+\frac12})\right]\left(F'(\widehat{\phi}^{n+\frac12}),\phi^{n+1}-\phi^n\right),\\
\displaystyle \widetilde{R}^{n+1}=\left(F(\overline{\phi}^{n+1}),1\right).
   \end{array}
\end{equation}
where $\widehat{\phi}^{n+\frac12}=\frac32\phi^n-\frac12\phi^{n-1}$.

\textbf{Step II}: Update the scalar auxiliary variable $R^{n+1}$ via a relaxation step as
\begin{equation}\label{RZF-CN-e2}
R^{n+1}=\lambda_0\widetilde{R}^{n+1}+(1-\lambda_0)\left(F(\phi^{n+1}),1\right),\quad \lambda_0\in\mathcal{V}.
\end{equation}
Here is $\mathcal{V}$ a set defined by
\begin{equation}\label{RZF-CN-e3}
\mathcal{V}=\left\{\lambda|\lambda\in[0,1]~s.t.~R^{n+1}-\widetilde{R}^{n+1}\leq\Delta t\kappa^{n+1}\left(\mathcal{G}\mu^{n+\frac12},\mu^{n+\frac12}\right),\ R^{n+1}=\lambda\widetilde{R}^{n+1}+(1-\lambda)\left(F(\phi^{n+1}),1\right)\right\}.
\end{equation}
Here, $\kappa^{n+1}\in[0,1]$ will be given below.

We first show how to solve the scheme \eqref{RZF-CN-e1}. Substituting the third equation in \eqref{RZF-CN-e1} into the fourth equation to obtain
\begin{equation}\label{RZF-CN-e4}
\displaystyle\widetilde{R}^{n+1}-R^n=\displaystyle\left[1+\mathcal{P}(\eta^{n+\frac12})\right]\left(F'(\widehat{\phi}^{n+\frac12}),\overline{\phi}^{n+1}+\mathcal{P}(\eta^{n+\frac12})q^{n+1}-\phi^n\right),
\end{equation}
Note that $\mathcal{P}(\eta)$ is a linear functional of $\eta$, hence, the above equation is a quadratic equation with one unknown for $\eta^{n+\frac12}$. It means we can obtain determine $\eta^{n+\frac12}$ explicitly from \eqref{RZF-CN-e4}, namely,
\begin{equation}\label{RZF-CN-e5}
\displaystyle\eta^{n+\frac12}=\frac{-b\pm\sqrt{b^2-4ac}}{2a}.
\end{equation}
Here, if we set $\mathcal{P}(\eta)=\mathcal{P}_1(\eta)=k_1\eta$, then we have $\mathcal{P}(\eta^{n+\frac12})=k_1\eta^{n+\frac12}$. The coefficients $a$, $b$ and $c$ of above quadratic equation \eqref{RZF-CN-e5} will satisfy:
\begin{equation*}
\aligned
&a=k_1^2\left({F'}(\widehat{\phi}^{n+\frac12}),q^{n+1}\right),\\
&b=k_1\left({F'}(\widehat{\phi}^{n+\frac12}),\overline{\phi}^{n+1}-\phi^n\right)+\left({F'}(\widehat{\phi}^{n+\frac12}),q^{n+1}\right),\\
&c=-\widetilde{R}^{n+1}+R^n+\left({F'}(\widehat{\phi}^{n+\frac12}),\overline{\phi}^{n+1}-\phi^n\right).
\endaligned
\end{equation*}
If we set $\mathcal{P}(\eta)=\mathcal{P}_2(\eta)=k_2\eta_t$, then we have $\mathcal{P}(\eta^{n+\frac12})=k_2\frac{\eta^{n+1}-\eta^n}{\Delta t}$. The coefficients $a$, $b$ and $c$ of above quadratic equation \eqref{RZF-CN-e5} will satisfy:
\begin{equation*}
\aligned
&a=\frac{k_2^2}{\Delta t^2}\left({F'}(\widehat{\phi}^{n+\frac12}),q^{n+1}\right),\\
&b=\frac{k_2}{\Delta t}\left({F'}(\widehat{\phi}^{n+\frac12}),\overline{\phi}^{n+1}-\phi^n-\frac{k_2}{\Delta t}\eta^nq^{n+1}\right)+\frac{k_2}{\Delta t}(1-\frac{k_2}{\Delta t}\eta^n)\left({F'}(\widehat{\phi}^{n+\frac12}),q^{n+1}\right),\\
&c=-\widetilde{R}^{n+1}+R^n+(1-\frac{k_2}{\Delta t}\eta^n)\left({F'}(\widehat{\phi}^{n+\frac12}),\overline{\phi}^{n+1}-\phi^n-\frac{k_2}{\Delta t}\eta^nq^{n+1}\right).
\endaligned
\end{equation*}\\
\begin{remark}
The left side of the equation \eqref{RZF-CN-e4} is an approximation of the free energy difference between two adjacent time steps. If this value is less than the round-off error of numerical integration, it might cause the zero factor $\mathcal{P}(\eta^{n+\frac12})$ tends to be -1. To avoid this mistake, if $|\widetilde{R}^{n+1}-R^n|<1e(-15)$ or $\mathcal{P}(\eta^{n+\frac12})\rightarrow-1$, we can solve the following equation by the Newton iteration to update $\mathcal{P}(\eta^{n+\frac12})$:
\begin{equation*}
\displaystyle\mathcal{P}(\eta^{n+\frac12})\left(\widetilde{R}^{n+1}-R^n\right)=\displaystyle\mathcal{P}(\eta^{n+\frac12})\left[1+\mathcal{P}(\eta^{n+\frac12})\right]\left(F'(\widehat{\phi}^{n+\frac12}),\overline{\phi}^{n+1}+\mathcal{P}(\eta^{n+\frac12})q^{n+1}-\phi^n\right),
\end{equation*}
The similar technique can be seen in \cite{lin2019numerical}.
\end{remark}

Next, we will show that how to obtain the optimal choice for the relaxation parameter $\lambda_0$. The set $\mathcal{V}$ in \eqref{RZF-CN-e3} can be simplified as
\begin{equation}\label{RZF-CN-e6}
\mathcal{V}=\left\{\lambda|\lambda\in[0,1] ~s.t.~\left[\widetilde{R}^{n+1}-\left(F(\phi^{n+1}),1\right)\right]\lambda\leq\left[\widetilde{R}^{n+1}-\left(F(\phi^{n+1}),1\right)\right]+
\Delta t\kappa^{n+1}\left(\mathcal{G}\mu^{n+\frac12},\mu^{n+\frac12}\right)\right\}.
\end{equation}

Noting the fact $\Delta t\kappa^{n+1}(\mathcal{G}\overline{\mu}^{n+\frac12},\overline{\mu}^{n+\frac12})\geq0$, then we obtain $1\in\mathcal{V}$ which means the set $\mathcal{V}$ is non-empty. We can choose the optimal relaxation parameter $\lambda_0$ as follows: the optimal $\lambda_0$ can be chosen as a solution of the following optimization problem:
\begin{equation}\label{RZF-CN-e7}
\aligned
\lambda_0=\min\limits_{\lambda\in[0,1]}\lambda\quad s.t.~\left[\widetilde{R}^{n+1}-\left(F(\phi^{n+1}),1\right)\right]\lambda\leq\left[\widetilde{R}^{n+1}-\left(F(\phi^{n+1}),1\right)\right]+
\Delta t\kappa^{n+1}\left(\mathcal{G}\mu^{n+\frac12},\mu^{n+\frac12}\right).
\endaligned
\end{equation}
The next theorem summarizes the choice of $\lambda_0$ and $\kappa^{n+1}$:
\begin{theorem}\label{RZF-CN-th1}
If $\widetilde{R}^{n+1}-\left(F(\phi^{n+1}),1\right)\neq0$, setting $\alpha=\frac{
\Delta t\left(\mathcal{G}\mu^{n+\frac12},\mu^{n+\frac12}\right)}{\left|\widetilde{R}^{n+1}-\left(F(\phi^{n+1}),1\right)\right|}$, then we can choose the optimal relaxation parameter $\lambda_0$ and $\kappa^{n+1}$ as follows:
\begin{enumerate}
\item[1.] If $\widetilde{R}^{n+1}\geq\left(F(\phi^{n+1}),1\right)$, we set $\lambda_0=0$ and $\kappa^{n+1}=0$;
\item[2.] If $\widetilde{R}^{n+1}<\left(F(\phi^{n+1}),1\right)$ and $\alpha\geq1$, we set $\lambda_0=0$ and $\kappa^{n+1}=\frac1\alpha$;
\item[3.] If $\widetilde{R}^{n+1}<\left(F(\phi^{n+1}),1\right)$ and $\alpha\in[0,1)$, we set $\lambda_0=1-\alpha$ and $\kappa^{n+1}=1$.
\end{enumerate}
\end{theorem}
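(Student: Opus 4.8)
The plan is to treat the minimization \eqref{RZF-CN-e7} as an elementary linear program in the single scalar $\lambda$ and to read off both $\lambda_0$ and the accompanying $\kappa^{n+1}$ from the sign of the defect. To this end I would abbreviate $\delta := \widetilde{R}^{n+1}-\left(F(\phi^{n+1}),1\right)$ and $s := \Delta t\left(\mathcal{G}\mu^{n+\frac12},\mu^{n+\frac12}\right)\ge 0$, so that the constraint appearing in \eqref{RZF-CN-e6}--\eqref{RZF-CN-e7} reads $\delta\lambda \le \delta + \kappa^{n+1}s$ and, by the definition of $\alpha$, $s = \alpha|\delta|$ with $\alpha\ge 0$. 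The whole argument then reduces to dividing this affine inequality by $\delta\neq 0$ while tracking the sign flip, and afterwards choosing $\kappa^{n+1}$ as small as the feasibility of the minimizing $\lambda$ permits, since a smaller $\kappa^{n+1}$ keeps the modified energy closer to the original one, which is the whole point of the relaxation.

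First I would dispose of the case $\delta>0$, i.e.\ $\widetilde{R}^{n+1}\ge \left(F(\phi^{n+1}),1\right)$ together with $\delta\neq0$. Since $s\ge0$, the right-hand side $\delta+\kappa^{n+1}s$ is at least $\delta$ for every $\kappa^{n+1}\ge0$; dividing by $\delta>0$ gives $\lambda\le 1+\kappa^{n+1}s/\delta$, a bound that is automatically satisfied by every $\lambda\in[0,1]$ \emph{already with} $\kappa^{n+1}=0$. Hence the minimizer over $[0,1]$ is $\lambda_0=0$ and no dissipation slack is needed, which is exactly item~1.

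Next I would handle $\delta<0$, i.e.\ $\widetilde{R}^{n+1}<\left(F(\phi^{n+1}),1\right)$. Now dividing $\delta\lambda\le\delta+\kappa^{n+1}s$ by $\delta<0$ reverses the inequality, and using $s/\delta = \alpha|\delta|/\delta = -\alpha$ turns the constraint into the lower bound $\lambda\ge 1-\kappa^{n+1}\alpha$. Minimizing $\lambda$ therefore amounts to pushing this lower bound down as far as $\kappa^{n+1}\in[0,1]$ allows, the extreme choice $\kappa^{n+1}=1$ giving the smallest attainable bound $1-\alpha$. If $\alpha\ge1$, then $1-\alpha$ is nonpositive, so $\lambda_0=0$ is feasible; among all $\kappa^{n+1}$ that keep $1-\kappa^{n+1}\alpha\le0$ the smallest is $\kappa^{n+1}=1/\alpha\in(0,1]$, yielding item~2. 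If instead $\alpha\in[0,1)$, then $1-\alpha>0$, so the minimum of $\lambda$ over $[0,1]$ equals $1-\alpha$, attained with $\kappa^{n+1}=1$, which is item~3. Throughout one uses that $1\in\mathcal{V}$ (already observed after \eqref{RZF-CN-e6}) to guarantee $\mathcal{V}\neq\emptyset$ and hence solvability of \eqref{RZF-CN-e7}.

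The only genuinely delicate point is the bookkeeping around dividing by the possibly negative quantity $\delta$ and the \emph{secondary} optimization over $\kappa^{n+1}$: the primary objective \eqref{RZF-CN-e7} fixes $\lambda_0$, but $\kappa^{n+1}$ is then pinned down by demanding the least dissipation slack compatible with that $\lambda_0$, which is why one takes $\kappa^{n+1}=1/\alpha$ rather than $\kappa^{n+1}=1$ in the borderline regime $\alpha\ge1$. I expect no analytic difficulty beyond this sign-and-slack accounting, because after the substitution $s=\alpha|\delta|$ every constraint is affine in $\lambda$.
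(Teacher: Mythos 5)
Your argument is correct and follows essentially the same route as the paper's own proof: a case split on the sign of $\widetilde{R}^{n+1}-\left(F(\phi^{n+1}),1\right)$, division of the affine constraint by that quantity with the appropriate inequality reversal, and then reading off the minimizing $\lambda$ together with the accompanying $\kappa^{n+1}$. Your added remark explaining why one takes $\kappa^{n+1}=1/\alpha$ rather than $\kappa^{n+1}=1$ when $\alpha\ge 1$ (least dissipation slack compatible with $\lambda_0=0$) makes explicit a point the paper leaves implicit, but the substance is identical.
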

\begin{proof}
(1) if $\widetilde{R}^{n+1}=\left(F(\phi^{n+1}),1\right)$, any arbitrary parameter $\lambda$ between $0$ and $1$ will satisfy the inequality in \eqref{RZF-CN-e7}. Thus, we have $\lambda_0=\min\limits_{\lambda\in[0,1]}\lambda=0$.

(2) if $\widetilde{R}^{n+1}>\left(F(\phi^{n+1}),1\right)$, the inequality in \eqref{RZF-CN-e7} will be simplified as
\begin{equation*}
\aligned
\lambda\leq1+\kappa^{n+1}\alpha.
\endaligned
\end{equation*}
Set $\kappa^{n+1}=0$, then $\lambda\leq1$ is always true. Thus, we also have $\lambda_0=\min\limits_{\lambda\in[0,1]}\lambda=0$.

(3) if $\widetilde{R}^{n+1}<\left(F(\phi^{n+1}),1\right)$, the inequality in \eqref{RZF-CN-e7} will be simplified as
\begin{equation*}
\aligned
\lambda\geq1-\kappa^{n+1}\alpha.
\endaligned
\end{equation*}
Firstly, if $\alpha\geq1$, we have $\frac1\alpha\in(0,1]$. Then we set $\kappa^{n+1}=\frac1\alpha$ to obtain that $\lambda\geq1-\kappa^{n+1}\alpha=0$ is always true. It means $\lambda_0=\min\limits_{\lambda\in[0,1]}\lambda=0$. Secondly, if $\alpha\in[0,1)$, we have $(1-\kappa^{n+1}\alpha)\in(0,1]$ for any $\kappa^{n+1}$. Then we obtain $\lambda_0=\min\limits_{\lambda\in[0,1]}\lambda=1-\kappa^{n+1}\alpha$. By setting $\kappa^{n+1}=1$, we obtain the optimal solution $\lambda_0=1-\alpha$.
\end{proof}

Next, the following theorem will shown that the above RZF-CN scheme \eqref{RZF-CN-e1}-\eqref{RZF-CN-e2} is unconditional energy stable with a modified energy that is directly linked to the original free energy.\\
\begin{theorem}\label{RZF-CN-th2}
The second-order Crank-Nicolson scheme \eqref{RZF-CN-e1}-\eqref{RZF-CN-e2} based on the RZF approach with the above choice of $\lambda_0$ and $\kappa^{n+1}$ is unconditionally energy stable in the sense that
\begin{equation}\label{RZF-CN-e8}
\mathcal{\widetilde{E}}(\phi^{n+1})-\mathcal{\widetilde{E}}(\phi^{n})\leq-\Delta t(1-\kappa^{n+1})(\mathcal{G}\mu^{n+\frac12},\mu^{n+\frac12})\leq0,
\end{equation}
where $\mathcal{\widetilde{E}}(\phi^{n+1})=\frac12(\mathcal{L}\phi^{n+1},\phi^{n+1})+R^{n+1}$ and more importantly we have
\begin{equation*}
\aligned
\mathcal{E}(\phi^{n+1})\leq\mathcal{E}(\phi^{n}),
\endaligned
\end{equation*}
under the condition of $\widetilde{R}^{n+1}\geq\left(F(\phi^{n+1}),1\right)$ or $\widetilde{R}^{n+1}<\left(F(\phi^{n+1}),1\right)$ with $\alpha\geq1$. Here $\mathcal{E}(\phi^{n})$ is the original energy where $\mathcal{E}(\phi^{n+1})=\frac12(\mathcal{L}\phi^{n+1},\phi^{n+1})+(F(\phi^{n+1}),1)$. If $\widetilde{R}^{n+1}<\left(F(\phi^{n+1}),1\right)$ and $\alpha\in[0,1)$, we could have
\begin{equation*}
\aligned
\mathcal{\widetilde{E}}(\phi^{n+1})\leq\mathcal{E}(\phi^{n+1}).
\endaligned
\end{equation*}
\end{theorem}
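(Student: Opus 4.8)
The plan is to treat the modified-energy dissipation law \eqref{RZF-CN-e8} as the central estimate and then read off the two comparison inequalities from it together with the relaxation update \eqref{RZF-CN-e2} and the explicit values of $\lambda_0,\kappa^{n+1}$ supplied by Theorem~\ref{RZF-CN-th1}. First I would establish a pre-relaxation energy identity for the quantity $\frac12(\mathcal{L}\phi^{n+1},\phi^{n+1})+\widetilde{R}^{n+1}$. As already noted in \eqref{ZF-e5}, the first three lines of \eqref{RZF-CN-e1} are algebraically equivalent to the single implicit update $\frac{\phi^{n+1}-\phi^n}{\Delta t}=-\mathcal{G}\mu^{n+\frac12}$ with the complete chemical potential $\mu^{n+\frac12}=\frac12\mathcal{L}\phi^{n+1}+\frac12\mathcal{L}\phi^n+\left[1+\mathcal{P}(\eta^{n+\frac12})\right]F'(\widehat{\phi}^{n+\frac12})$. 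Taking the inner product of this update with $\mu^{n+\frac12}$, using the symmetry of $\mathcal{L}$ to telescope the quadratic part as $\left(\phi^{n+1}-\phi^n,\frac12\mathcal{L}(\phi^{n+1}+\phi^n)\right)=\frac12(\mathcal{L}\phi^{n+1},\phi^{n+1})-\frac12(\mathcal{L}\phi^n,\phi^n)$, and replacing $\left[1+\mathcal{P}(\eta^{n+\frac12})\right]\left(F'(\widehat{\phi}^{n+\frac12}),\phi^{n+1}-\phi^n\right)$ by $\widetilde{R}^{n+1}-R^n$ through the fourth line of \eqref{RZF-CN-e1}, I obtain the exact identity
\begin{equation*}
\frac12(\mathcal{L}\phi^{n+1},\phi^{n+1})+\widetilde{R}^{n+1}-\left[\frac12(\mathcal{L}\phi^{n},\phi^{n})+R^{n}\right]=-\Delta t(\mathcal{G}\mu^{n+\frac12},\mu^{n+\frac12}),
\end{equation*}
which is the relaxed counterpart of \eqref{ZF-e4}.

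Next I would insert the relaxation. Adding $R^{n+1}-\widetilde{R}^{n+1}$ to both sides and recalling $\mathcal{\widetilde{E}}(\phi^{n+1})=\frac12(\mathcal{L}\phi^{n+1},\phi^{n+1})+R^{n+1}$ gives
\begin{equation*}
\mathcal{\widetilde{E}}(\phi^{n+1})-\mathcal{\widetilde{E}}(\phi^{n})=-\Delta t(\mathcal{G}\mu^{n+\frac12},\mu^{n+\frac12})+\left(R^{n+1}-\widetilde{R}^{n+1}\right).
\end{equation*}
Because $R^{n+1}$ is selected from the admissible set $\mathcal{V}$ of \eqref{RZF-CN-e3}, its defining constraint $R^{n+1}-\widetilde{R}^{n+1}\leq\Delta t\kappa^{n+1}(\mathcal{G}\mu^{n+\frac12},\mu^{n+\frac12})$ applies, and substituting it yields \eqref{RZF-CN-e8}; the final inequality $\leq0$ holds since $\kappa^{n+1}\in[0,1]$ forces $1-\kappa^{n+1}\geq0$ while $(\mathcal{G}\mu^{n+\frac12},\mu^{n+\frac12})\geq0$. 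Nonemptiness of $\mathcal{V}$ (hence solvability of Step II) and the concrete values of $\lambda_0,\kappa^{n+1}$ are exactly those provided by Theorem~\ref{RZF-CN-th1}.

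I would then deduce the two comparison inequalities from the relaxation formula \eqref{RZF-CN-e2}, rewritten as $R^{n+1}-\left(F(\phi^{n+1}),1\right)=\lambda_0\left[\widetilde{R}^{n+1}-\left(F(\phi^{n+1}),1\right)\right]$. In case 3 ($\widetilde{R}^{n+1}<\left(F(\phi^{n+1}),1\right)$, $\alpha\in[0,1)$) Theorem~\ref{RZF-CN-th1} sets $\lambda_0=1-\alpha\geq0$ and the bracket is negative, so $R^{n+1}\leq\left(F(\phi^{n+1}),1\right)$, which is precisely $\mathcal{\widetilde{E}}(\phi^{n+1})\leq\mathcal{E}(\phi^{n+1})$. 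In cases 1 and 2 one has $\lambda_0=0$, whence $R^{n+1}=\left(F(\phi^{n+1}),1\right)$ and therefore $\mathcal{\widetilde{E}}(\phi^{n+1})=\mathcal{E}(\phi^{n+1})$; combining this with \eqref{RZF-CN-e8} and the same inequality one index lower, $R^{n}\leq\left(F(\phi^{n}),1\right)$ (valid in every case by the argument just given, and at startup because $R^0=\left(F(\phi^0),1\right)$), produces the chain $\mathcal{E}(\phi^{n+1})=\mathcal{\widetilde{E}}(\phi^{n+1})\leq\mathcal{\widetilde{E}}(\phi^{n})\leq\mathcal{E}(\phi^{n})$.

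The step I expect to be the main obstacle is the first one: although \eqref{RZF-CN-e1} is written in the split ``baseline plus correction'' form involving the auxiliary $\overline{\phi}^{n+1}$ and the baseline potential, one must recognize that the corrected iterate $\phi^{n+1}=\overline{\phi}^{n+1}+\mathcal{P}(\eta^{n+\frac12})q^{n+1}$ in fact satisfies the full implicit relation with the complete chemical potential, so that the $\mathcal{L}$-energy telescopes in $\phi^{n+1}$ rather than in $\overline{\phi}^{n+1}$ and the prefactor $1+\mathcal{P}(\eta^{n+\frac12})$ matches exactly the coefficient in the fourth line of \eqref{RZF-CN-e1}. Keeping $\mu^{n+\frac12}$ consistently equal to this complete potential in the energy identity, in the constraint defining $\mathcal{V}$, and in $\alpha$ is essential; once this is settled, the remainder is routine sign bookkeeping driven entirely by the definitions of $\mathcal{V}$ and of $\lambda_0,\kappa^{n+1}$.
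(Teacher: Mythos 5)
Your proposal is correct and follows essentially the same route as the paper's proof: the pre-relaxation identity $\frac12(\mathcal{L}\phi^{n+1},\phi^{n+1})+\widetilde{R}^{n+1}-\frac12(\mathcal{L}\phi^{n},\phi^{n})-R^{n}=-\Delta t(\mathcal{G}\mu^{n+\frac12},\mu^{n+\frac12})$ obtained from the full implicit form of the scheme, then the $\mathcal{V}$-constraint to pass to the relaxed energy, then the identity $R^{n+1}-(F(\phi^{n+1}),1)=\lambda_0[\widetilde{R}^{n+1}-(F(\phi^{n+1}),1)]$ combined with the case analysis of $\lambda_0$ to get the comparison inequalities and the chain $\mathcal{E}(\phi^{n+1})=\mathcal{\widetilde{E}}(\phi^{n+1})\leq\mathcal{\widetilde{E}}(\phi^{n})\leq\mathcal{E}(\phi^{n})$. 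Your explicit remark that the induction needs $R^{n}\leq(F(\phi^{n}),1)$ at the previous level and at startup is a point the paper states only implicitly, but it is the same argument.
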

\begin{proof}
The first three equations in the \textbf{Step I} of the RZF-CN scheme \eqref{RZF-CN-e1} can be rewrite as follows:
\begin{equation}\label{RZF-CN-e9}
   \begin{array}{l}
\displaystyle\frac{\phi^{n+1}-\phi^n}{\Delta t}=-\mathcal{G}\mu^{n+\frac12},\\
\displaystyle\mu^{n+\frac12}=\frac12\mathcal{L}\phi^{n+1}+\frac12\mathcal{L}\phi^{n}+\left[1+\mathcal{P}(\eta^{n+\frac12})\right]F'(\widehat{\phi}^{n+\frac12}),\\
   \end{array}
\end{equation}
Taking the inner products of \eqref{RZF-CN-e9} with $\mu^{n+\frac12}$ and $-\frac{\phi^{n+1}-\phi^n}{\Delta t}$ respectively,
and multiplying the fourth equation with $\Delta t$ in \eqref{RZF-CN-e1}, then combining these equations, we obtain immediately
\begin{equation}\label{RZF-CN-e10}
\aligned
\displaystyle \left[\frac12(\mathcal{L}\phi^{n+1},\phi^{n+1})+\widetilde{R}^{n+1}\right]-\left[\frac12(\mathcal{L}\phi^{n},\phi^{n})+R^{n}\right]=-\Delta t(\mathcal{G}\mu^{n+\frac12},\mu^{n+\frac12})\leq0.
\endaligned
\end{equation}
From the constraint condition in \eqref{RZF-CN-e3}, we could obtain
\begin{equation}\label{RZF-CN-e11}
R^{n+1}-\widetilde{R}^{n+1}\leq\Delta t\kappa^{n+1}\left(\mathcal{G}\mu^{n+\frac12},\mu^{n+\frac12}\right).
\end{equation}
Substituting the inequality \eqref{RZF-CN-e11} into \eqref{RZF-CN-e10} and noting $\kappa^{n+1}\in[0,1]$, we could have
\begin{equation}\label{RZF-CN-e12}
\aligned
\mathcal{\widetilde{E}}(\phi^{n+1})-\mathcal{\widetilde{E}}(\phi^{n})
&=\left[\frac12(\mathcal{L}\phi^{n+1},\phi^{n+1})+R^{n+1}\right]-\left[\frac12(\mathcal{L}\phi^{n},\phi^{n})+R^{n}\right]\\
&\leq\left[\frac12(\mathcal{L}\phi^{n+1},\phi^{n+1})+\widetilde{R}^{n+1}\right]-\left[\frac12(\mathcal{L}\phi^{n},\phi^{n})+R^{n}\right]+\Delta t\kappa^{n+1}(\mathcal{G}\mu^{n+\frac12},\mu^{n+\frac12})\\
&=-\Delta t(1-\kappa^{n+1})(\mathcal{G}\mu^{n+\frac12},\mu^{n+\frac12})\leq0.
\endaligned
\end{equation}
Noting that the original energy $\mathcal{E}(\phi^{n+1})=\frac12(\mathcal{L}\phi^{n+1},\phi^{n+1})+(F(\phi^{n+1}),1)$ and using the \textbf{Step II} of the RZF-CN scheme \eqref{RZF-CN-e2}, we could get
\begin{equation}\label{RZF-CN-e13}
\aligned
\mathcal{\widetilde{E}}(\phi^{n+1})-\mathcal{E}(\phi^{n+1})
&=R^{n+1}-(F(\phi^{n+1}),1)\\
&=\lambda_0\widetilde{R}^{n+1}+(1-\lambda_0)\left(F(\phi^{n+1}),1\right)-(F(\phi^{n+1}),1)\\
&=\lambda_0\left[\widetilde{R}^{n+1}-(F(\phi^{n+1}),1)\right].
\endaligned
\end{equation}
From the choice of $\lambda_0$ in Theorem \ref{RZF-CN-th1}, if $\widetilde{R}^{n+1}\geq\left(F(\phi^{n+1}),1\right)$ or $\widetilde{R}^{n+1}<\left(F(\phi^{n+1}),1\right)$ and $\alpha\geq1$, we have $\lambda_0=0$. It means $\mathcal{\widetilde{E}}(\phi^{n+1})-\mathcal{E}(\phi^{n+1})=\lambda_0\left[\widetilde{R}^{n+1}-(F(\phi^{n+1}),1)\right]=0$. If $\widetilde{R}^{n+1}<\left(F(\phi^{n+1}),1\right)$ and $\alpha\in[0,1)$, we have $\lambda_0\left[\widetilde{R}^{n+1}-(F(\phi^{n+1}),1)\right]\leq0$, then the following inequality will hold:
\begin{equation*}
\aligned
\mathcal{\widetilde{E}}(\phi^{n+1})\leq\mathcal{E}(\phi^{n+1}).
\endaligned
\end{equation*}
From above analysis, we can obtain that
\begin{equation*}
\aligned
\mathcal{\widetilde{E}}(\phi^{n})\leq\mathcal{E}(\phi^{n}),\quad \forall n\geq0.
\endaligned
\end{equation*}
Combining above inequality with \eqref{RZF-CN-e9}, when $\widetilde{R}^{n+1}\geq\left(F(\phi^{n+1}),1\right)$ or $\widetilde{R}^{n+1}<\left(F(\phi^{n+1}),1\right)$ and $\alpha\geq1$, we could have
\begin{equation}\label{RZF-CN-e14}
\aligned
\mathcal{E}(\phi^{n+1})=\mathcal{\widetilde{E}}(\phi^{n+1})\leq\mathcal{\widetilde{E}}(\phi^{n})\leq\mathcal{E}(\phi^{n}).
\endaligned
\end{equation}
\end{proof}\\
\begin{remark}\label{RZF-CN-re1}
From Theorem \ref{RZF-CN-th2}, we observe that in most cases, we have $\mathcal{E}(\phi^{n+1})\leq\mathcal{E}(\phi^{n})$ which means the RZF-CN scheme \eqref{RZF-CN-e1}-\eqref{RZF-CN-e2} dissipates the original energy. Only if $\widetilde{R}^{n+1}<\left(F(\phi^{n+1}),1\right)$ and $\alpha\in[0,1)$, we can not obtain the original energy dissipative law. Hence, by a relaxation technique, the original energy is proved to be dissipative in most situations, which is a significant improvement over the modified ZF-CN scheme \eqref{RZF-CN-e1}. More importantly, the new proposed RZF-CN scheme \eqref{RZF-CN-e1}-\eqref{RZF-CN-e2} keeps the advantage of the scheme \eqref{RZF-CN-e1} in calculation.
\end{remark}
%
%
\subsection{A Second-order RZF-BDF2 Scheme}
In this subsection, we consider a second-order RZF scheme based on 2-step backward difference formula (BDF2). For gradient flow models, it is usually better to use BDF schemes. The second-order RZF-BDF2 scheme for the equivalent system \eqref{RZF-e1} is as follows: given $R^0=(F(\phi(0,\textbf{x}),1)$, $R^{n-1}$, $R^n$ $\phi^{n-1}$, $\phi^n$, we can update $\phi^{n+1}$ via the following two steps:

\textbf{Step I}: Compute $\phi^{n+1}$ and $\widetilde{R}^{n+1}$ by the following second-order semi-implicit BDF2 scheme:
\begin{equation}\label{RZF-BDF-e1}
   \begin{array}{l}
\displaystyle\frac{3\phi^{n+1}-4\phi^n+\phi^{n-1}}{2\Delta t}=-\mathcal{G}\mu^{n+1},\\
\displaystyle\mu^{n+1}=\mathcal{L}\phi^{n+1}+F'(\widehat{\phi}^{n+1})+\mathcal{P}(\eta^{n+1})F'(\widehat{\phi}^{n+1}),\\
\displaystyle3\widetilde{R}^{n+1}-4R^n+R^{n-1}=\displaystyle\left[1+\mathcal{P}(\eta^{n+1})\right]\left(F'(\widehat{\phi}^{n+1}),3\phi^{n+1}-4\phi^n+\phi^{n-1}\right),\\
\displaystyle\widetilde{R}^{n+1}=\left(F(\overline{\phi}^{n+1}),1\right),
   \end{array}
\end{equation}
where $\widehat{\phi}^{n+1}=2\phi^n-\phi^{n-1}$.

\textbf{Step II}: Update the scalar auxiliary variable $R^{n+1}$ via a relaxation step as
\begin{equation}\label{RZF-BDF-e2}
R^{n+1}=\lambda_0\widetilde{R}^{n+1}+(1-\lambda_0)\left(F(\phi^{n+1}),1\right),\quad \lambda_0\in\mathcal{V}.
\end{equation}
Here is $\mathcal{V}$ a set defined by
\begin{equation}\label{RZF-BDF-e3}
\aligned
\mathcal{V}=\left\{\lambda|\lambda\in[0,1]~s.t.~\right. &R^{n+1}-\widetilde{R}^{n+1}\leq\Delta t\kappa^{n+1}\left(\mathcal{G}\mu^{n+1},\mu^{n+1}\right)\\
&+\frac14\kappa^{n+1}\left(\mathcal{L}(\phi^{n+1}-2\phi^n+\phi^{n-1}),\phi^{n+1}-2\phi^n+\phi^{n-1}\right),\\
&\left.R^{n+1}=\lambda\widetilde{R}^{n+1}+(1-\lambda)\left(F(\phi^{n+1}),1\right)\right\}.
\endaligned
\end{equation}
Here, $\kappa^{n+1}\in[0,\frac23]$ will be given below.

Firstly, we show how to solve the scheme \eqref{RZF-BDF-e1} efficiently. Combining the first two equations in equation \eqref{RZF-BDF-e1} for the RZF-BDF2 scheme, we can obtain the following linear matrix equation
\begin{equation*}
\aligned
\displaystyle(3I+2\Delta t\mathcal{G}\mathcal{L})\phi^{n+1}=4I\phi^n-I\phi^{n-1}-2\Delta t\mathcal{G}F'(\widehat{\phi}^{n+1})-2\Delta t\mathcal{P}(\eta^{n+1})\mathcal{G}F'(\widehat{\phi}^{n+1}).
\endaligned
\end{equation*}

The coefficient matrix $A=(3I+2\Delta t\mathcal{G}\mathcal{L})$ is a symmetric positive matrix, then we have
\begin{equation}\label{RZF-BDF-e4}
\aligned
\phi^{n+1}
&=A^{-1}(4I\phi^n-I\phi^{n-1})-2\Delta tA^{-1}\mathcal{G}F'(\widehat{\phi}^{n+1})-2\Delta t\mathcal{P}(\eta^{n+1})A^{-1}\mathcal{G}F'(\widehat{\phi}^{n+1})\\
&=\overline{\phi}^{n+1}+\mathcal{P}(\eta^{n+1})q^{n+1}.
\endaligned
\end{equation}
Here $\overline{\phi}^{n+1}$ and $q^{n+1}$ can be determined as follows:
\begin{equation}\label{RZF-BDF-e5}
\aligned
\overline{\phi}^{n+1}=A^{-1}(4I\phi^n-I\phi^{n-1}-2\Delta t\mathcal{G}F'(\widehat{\phi}^{n+1})),\quad q^{n+1}=-2\Delta tA^{-1}\mathcal{G}F'(\widehat{\phi}^{n+1}).
\endaligned
\end{equation}
Noting that $\phi^{n+1}=\overline{\phi}^{n+1}+\mathcal{P}(\eta^{n+1})q^{n+1}$, then we can compute $\eta^{n+1}$ by the third and the fourth equations in \eqref{RZF-BDF-e1}:
\begin{equation}\label{RZF-BDF-e6}
\displaystyle3\widetilde{R}^{n+1}-4R^n+R^{n-1}=\displaystyle\left[1+\mathcal{P}(\eta^{n+1})\right]\left(F'(\widehat{\phi}^{n+1}),3\overline{\phi}^{n+1}+3\mathcal{P}(\eta^{n+1})q^{n+1}-4\phi^n+\phi^{n-1}\right),
\end{equation}
Similar as the RZF-CN scheme, the above equation is a also quadratic equation with one unknown for $\eta^{n+1}$. It means we can obtain determine $\eta^{n+1}$ explicitly from \eqref{RZF-BDF-e6}, namely,
\begin{equation}\label{RZF-BDF-e7}
\displaystyle\eta^{n+1}=\frac{-b\pm\sqrt{b^2-4ac}}{2a}.
\end{equation}
Here, if we set $\mathcal{P}(\eta)=\mathcal{P}_1(\eta)=k_1\eta$, then we have $\mathcal{P}(\eta^{n+1})=k_1\eta^{n+1}$. The coefficients $a$, $b$ and $c$ of above quadratic equation \eqref{RZF-BDF-e7} will satisfy:
\begin{equation*}
\aligned
&a=3k_1^2\left({F'}(\widehat{\phi}^{n+1}),q^{n+1}\right),\\
&b=k_1\left({F'}(\widehat{\phi}^{n+1}),3\widehat{\phi}^{n+1}-4\phi^n+\phi^{n-1}\right)+3k_1\left({F'}(\widehat{\phi}^{n+1}),q^{n+1}\right),\\
&c=-\left(3\widetilde{R}^{n+1}-4R^n+R^{n-1}\right)+\left({F'}(\widehat{\phi}^{n+1}),3\widehat{\phi}^{n+1}-4\phi^n+\phi^{n-1}\right).
\endaligned
\end{equation*}
If we set $\mathcal{P}(\eta)=\mathcal{P}_2(\eta)=k_2\eta_t$, then we have $\mathcal{P}(\eta^{n+1})=k_2\frac{3\eta^{n+1}-4\eta^n+\eta^{n-1}}{2\Delta t}$. The coefficients $a$, $b$ and $c$ of above quadratic equation \eqref{RZF-BDF-e7} will satisfy:
\begin{equation*}
\aligned
&a=\frac{27k_2^2}{4\Delta t^2}\left({F'}(\widehat{\phi}^{n+1}),q^{n+1}\right),\\
&b=\frac{3k_2}{2\Delta t}\left({F'}(\widehat{\phi}^{n+1}),3\overline{\phi}^{n+1}-4\phi^n+\phi^{n-1}-\frac{12\eta^n-3\eta^{n-1}}{2\Delta t}k_2q^{n+1}\right)\\
&\quad+\frac{9k_2}{2\Delta t}\left(1-\frac{4\eta^n-\eta^{n-1}}{2\Delta t}k_2\right)\left({F'}(\widehat{\phi}^{n+1}),q^{n+1}\right),\\
&c=\left(1-\frac{4\eta^n-\eta^{n-1}}{2\Delta t}k_2\right)\left({F'}(\widehat{\phi}^{n+1}),3\overline{\phi}^{n+1}-4\phi^n+\phi^{n-1}-\frac{12\eta^n-3\eta^{n-1}}{2\Delta t}k_2q^{n+1}\right)\\
&\quad-\left(3\widetilde{R}^{n+1}-4R^n+R^{n-1}\right).
\endaligned
\end{equation*}

Next, we will show that how to obtain the optimal choice for the relaxation parameter $\lambda_0$. The set $\mathcal{V}$ in \eqref{RZF-BDF-e3} can be simplified as
\begin{equation}\label{RZF-BDF-e8}
\aligned
\mathcal{V}=\left\{\lambda|\lambda\in[0,1]~s.t.~\right. &\left(\widetilde{R}^{n+1}-(F(\phi^{n+1}),1)\right)\lambda\leq\left(\widetilde{R}^{n+1}-(F(\phi^{n+1}),1)\right)+\Delta t\kappa^{n+1}\left(\mathcal{G}\mu^{n+1},\mu^{n+1}\right)\\
&\left.+\frac14\kappa^{n+1}\left(\mathcal{L}(\phi^{n+1}-2\phi^n+\phi^{n-1}),\phi^{n+1}-2\phi^n+\phi^{n-1}\right)\right\}.
\endaligned
\end{equation}

It is to obtain that $1\in\mathcal{V}$ which means the set $\mathcal{V}$ is non-empty because of the fact $\Delta t\kappa^{n+1}(\mathcal{G}\mu^{n+1},\mu^{n+1})\geq0$ and $\frac14\kappa^{n+1}\left(\mathcal{L}(\phi^{n+1}-2\phi^n+\phi^{n-1}),\phi^{n+1}-2\phi^n+\phi^{n-1}\right)\geq0$. The optimal parameter $\lambda_0$ can be chosen as a solution of the following optimization problem:
\begin{equation}\label{RZF-BDF-e9}
\aligned
\lambda_0=\min\limits_{\lambda\in[0,1]}\lambda~s.t.~ &\left(\widetilde{R}^{n+1}-(F(\phi^{n+1}),1)\right)\lambda\leq\left(\widetilde{R}^{n+1}-(F(\phi^{n+1}),1)\right)+\Delta t\kappa^{n+1}\left(\mathcal{G}\mu^{n+1},\mu^{n+1}\right)\\
&+\frac14\kappa^{n+1}\left(\mathcal{L}(\phi^{n+1}-2\phi^n+\phi^{n-1}),\phi^{n+1}-2\phi^n+\phi^{n-1}\right).
\endaligned
\end{equation}
The next theorem summarizes the choice of $\lambda_0$ and $\kappa^{n+1}$:\\
\begin{theorem}\label{RZF-BDF-th1}
Define $\beta=\frac{
\Delta t\left(\mathcal{G}\mu^{n+\frac12},\mu^{n+\frac12}\right)+\frac14\left(\mathcal{L}(\phi^{n+1}-2\phi^n+\phi^{n-1}),\phi^{n+1}-2\phi^n+\phi^{n-1}\right)}{\left|\widetilde{R}^{n+1}-\left(F(\phi^{n+1}),1\right)\right|}$ under the condition of $\widetilde{R}^{n+1}-\left(F(\phi^{n+1}),1\right)\neq0$, then we can choose the optimal relaxation parameter $\lambda_0$ and $\kappa^{n+1}$ as follows:
\begin{enumerate}
\item[1.] If $\widetilde{R}^{n+1}\geq\left(F(\phi^{n+1}),1\right)$, we set $\lambda_0=0$ and $\kappa^{n+1}=0$;
\item[2.] If $\widetilde{R}^{n+1}<\left(F(\phi^{n+1}),1\right)$ and $\beta\geq\frac32$, we set $\lambda_0=0$ and $\kappa^{n+1}=\frac1\beta$;
\item[3.] If $\widetilde{R}^{n+1}<\left(F(\phi^{n+1}),1\right)$ and $\beta\in[0,\frac32)$, we set $\lambda_0=1-\frac23\beta$ and $\kappa^{n+1}=\frac23$.
\end{enumerate}
\end{theorem}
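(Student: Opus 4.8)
The plan is to treat the optimization problem \eqref{RZF-BDF-e9} as a one-dimensional linear program in $\lambda$ and solve it by cases on the sign of $D:=\widetilde{R}^{n+1}-\left(F(\phi^{n+1}),1\right)$, exactly paralleling the argument already carried out for the Crank--Nicolson case in Theorem \ref{RZF-CN-th1}. First I would abbreviate the nonnegative quantity
\[
S:=\Delta t\left(\mathcal{G}\mu^{n+1},\mu^{n+1}\right)+\tfrac14\left(\mathcal{L}(\phi^{n+1}-2\phi^n+\phi^{n-1}),\phi^{n+1}-2\phi^n+\phi^{n-1}\right)\ge 0,
\]
where $S\ge0$ because $\mathcal{G}$ is positive and $\mathcal{L}$ symmetric non-negative (this is the same nonnegativity already used to show $\mathcal V\neq\emptyset$). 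Then the constraint in \eqref{RZF-BDF-e9} reads $D\lambda\le D+\kappa^{n+1}S$, and whenever $D\neq0$ we have $\beta=S/|D|$. Note $\lambda=1$ always satisfies the constraint, so the feasible set is nonempty and $\lambda_0$ is well defined.

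Next I would dispose of the case $D\ge0$ (item~1). Choosing $\kappa^{n+1}=0$ collapses the constraint to $D\lambda\le D$: if $D>0$ this is $\lambda\le1$, valid on all of $[0,1]$, and if $D=0$ it is vacuous. In either subcase the minimiser over $[0,1]$ is $\lambda_0=0$, matching the claim.

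For items~2 and 3 I assume $D<0$. Dividing the constraint by $D$ reverses the inequality, and using $S/D=-\beta$ gives the lower bound $\lambda\ge 1-\kappa^{n+1}\beta$. Since the objective is to minimise $\lambda$, I would push this bound as far down as the admissible range $\kappa^{n+1}\in[0,\tfrac23]$ permits, i.e. maximise $\kappa^{n+1}\beta$. If $\beta\ge\tfrac32$ then $1/\beta\le\tfrac23$, so $\kappa^{n+1}=1/\beta$ is feasible and forces $1-\kappa^{n+1}\beta=0$, whence $\lambda_0=0$ (item~2). If instead $\beta\in[0,\tfrac32)$, the largest admissible choice is $\kappa^{n+1}=\tfrac23$, and then $1-\tfrac23\beta>0$, so the bound is active and cannot be relaxed to $0$; the minimiser is the active bound itself, $\lambda_0=1-\tfrac23\beta\in(0,1]$ (item~3).

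The computation is elementary; the only points needing care are the sign reversal when dividing by $D<0$ and the bookkeeping that the prescribed $\kappa^{n+1}$ never leaves $[0,\tfrac23]$. In particular, the equivalence $1/\beta\le\tfrac23\iff\beta\ge\tfrac32$ is precisely what fixes the threshold $\tfrac32$ here, in contrast to the threshold $1$ and cap $\kappa^{n+1}\le1$ appearing in the Crank--Nicolson Theorem \ref{RZF-CN-th1}. The BDF2 cap $\kappa^{n+1}\le\tfrac23$ is itself dictated by the energy identity to be invoked in the subsequent stability theorem, so I would simply take it as a prescribed admissible range in this proof.
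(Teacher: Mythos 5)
Your proposal is correct and follows essentially the same route as the paper's own proof: a case split on the sign of $\widetilde{R}^{n+1}-\left(F(\phi^{n+1}),1\right)$, dividing the constraint by that quantity (with the sign reversal when it is negative), and then maximising $\kappa^{n+1}\beta$ over the admissible range $\kappa^{n+1}\in[0,\tfrac23]$ to drive the lower bound $1-\kappa^{n+1}\beta$ to its minimum. The only cosmetic differences are that you merge the paper's subcases $D=0$ and $D>0$ into one, and you write the dissipation term with $\mu^{n+1}$ (consistent with the BDF2 constraint \eqref{RZF-BDF-e9}) where the theorem statement has a typographical $\mu^{n+\frac12}$.
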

\begin{proof}
(1) if $\widetilde{R}^{n+1}=\left(F(\phi^{n+1}),1\right)$, the inequality \eqref{RZF-BDF-e9} is always true for any $\lambda\in[0,1]$. Thus, we have the optimal relaxation parameter $\lambda_0=\min\limits_{\lambda\in[0,1]}\lambda=0$.

(2) if $\widetilde{R}^{n+1}>\left(F(\phi^{n+1}),1\right)$, dividing by $\widetilde{R}^{n+1}-\left(F(\phi^{n+1}),1\right)$ for both sides of the inequality \eqref{RZF-BDF-e9}, we have
\begin{equation*}
\aligned
\lambda\leq1+\kappa^{n+1}\beta.
\endaligned
\end{equation*}
Setting $\kappa^{n+1}=0$, then we have $\lambda\leq1$ which means all parameters in $[0,1]$ are satisfy above inequality. Hence, we have $\lambda_0=\min\limits_{\lambda\in[0,1]}\lambda=0$.

(3) if $\widetilde{R}^{n+1}<\left(F(\phi^{n+1}),1\right)$, the optimization problem \eqref{RZF-BDF-e9} will be simplified as:
\begin{equation*}
\aligned
\lambda_0=\min\limits_{\lambda\in[0,1]}\lambda~s.t.~\lambda\geq1-\kappa^{n+1}\beta.
\endaligned
\end{equation*}
Noting that $\kappa^{n+1}\in[0,\frac23]$, then if $\alpha\geq\frac32$, we have $\kappa^{n+1}\beta\in[0,1]$. Then we set $\kappa^{n+1}=\frac1\beta$ to obtain that $\lambda\geq1-\kappa^{n+1}\beta=0$ is always true. It means $\lambda_0=\min\limits_{\lambda\in[0,1]}\lambda=0$. Secondly, if $\beta\in[0,\frac32)$, we have $\kappa^{n+1}\beta<1$ to let $(1-\kappa^{n+1}\beta)\in(0,1]$ for any $\kappa^{n+1}\in[0,\frac23]$. Then we obtain $\lambda_0=\min\limits_{\lambda\in[0,1]}\lambda=1-\beta\max{\kappa^{n+1}}$. Noting that $\max{\kappa^{n+1}}=\frac23$, we obtain the optimal solution $\lambda_0=1-\frac23\beta$.
\end{proof}\\
\begin{theorem}\label{RZF-BDF-th2}
The second-order RZF-BDF2 scheme \eqref{RZF-BDF-e1}-\eqref{RZF-BDF-e2} with the above optimal choice of $\lambda_0$ and $\kappa^{n+1}$ is unconditionally energy stable in the sense that
\begin{equation}\label{RZF-BDF-e10}
\mathcal{\widetilde{E}}(\phi^{n+1})-\mathcal{\widetilde{E}}(\phi^{n})\leq-(1-\frac32\kappa^{n+1})\left[\Delta t(\mathcal{G}\mu^{n+1},\mu^{n+1})+\frac14\left(\mathcal{L}(\phi^{n+1}-2\phi^n+\phi^{n-1}),\phi^{n+1}-2\phi^n+\phi^{n-1}\right)\right]\leq0,
\end{equation}
where $\mathcal{\widetilde{E}}(\phi^{n+1})=\frac14(\mathcal{L}\phi^{n+1},\phi^{n+1})+\frac14\left(\mathcal{L}(2\phi^{n+1}-\phi^n),2\phi^{n+1}-\phi^n\right)+\frac32R^{n+1}-\frac12R^n$.
\end{theorem}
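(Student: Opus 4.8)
The plan is to mirror the proof of Theorem \ref{RZF-CN-th2}, replacing the Crank--Nicolson energy identity by its BDF2 analogue. Writing $\|\psi\|_{\mathcal{L}}^2:=(\mathcal{L}\psi,\psi)$ for brevity, I would first combine the first two equations of \textbf{Step I} in \eqref{RZF-BDF-e1} to fold the zero factor into the nonlinear term, namely $\mu^{n+1}=\mathcal{L}\phi^{n+1}+\left[1+\mathcal{P}(\eta^{n+1})\right]F'(\widehat{\phi}^{n+1})$. Taking the inner product of the first equation with $2\Delta t\,\mu^{n+1}$ gives
\[
\left(3\phi^{n+1}-4\phi^n+\phi^{n-1},\mu^{n+1}\right)=-2\Delta t\left(\mathcal{G}\mu^{n+1},\mu^{n+1}\right).
\]
Expanding $\mu^{n+1}$ splits the left-hand side into an $\mathcal{L}$-quadratic contribution and the nonlinear inner product $\left[1+\mathcal{P}(\eta^{n+1})\right]\left(F'(\widehat{\phi}^{n+1}),3\phi^{n+1}-4\phi^n+\phi^{n-1}\right)$, which by the third equation of \eqref{RZF-BDF-e1} equals $3\widetilde{R}^{n+1}-4R^n+R^{n-1}$ exactly.

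The core algebraic ingredient is the BDF2 polarization identity for the symmetric non-negative operator $\mathcal{L}$,
\[
2\left(3a-4b+c,\mathcal{L}a\right)=\|a\|_{\mathcal{L}}^2+\|2a-b\|_{\mathcal{L}}^2-\|b\|_{\mathcal{L}}^2-\|2b-c\|_{\mathcal{L}}^2+\|a-2b+c\|_{\mathcal{L}}^2,
\]
applied with $a=\phi^{n+1}$, $b=\phi^n$, $c=\phi^{n-1}$. After dividing by $4$ this produces the telescoping difference of $\frac14\|\phi^{n+1}\|_{\mathcal{L}}^2+\frac14\|2\phi^{n+1}-\phi^n\|_{\mathcal{L}}^2$ together with the nonnegative residual $\frac14\|\phi^{n+1}-2\phi^n+\phi^{n-1}\|_{\mathcal{L}}^2$. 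In parallel I would telescope the scalar block via the identity $\frac12(3R^{n+1}-4R^n+R^{n-1})=\left(\frac32R^{n+1}-\frac12R^n\right)-\left(\frac32R^n-\frac12R^{n-1}\right)$, which is precisely the difference of the $R$-part of $\mathcal{\widetilde{E}}$; since \textbf{Step I} carries $\widetilde{R}^{n+1}$ rather than $R^{n+1}$, this introduces the single correction term $\frac32\left(\widetilde{R}^{n+1}-R^{n+1}\right)$.

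Assembling these two telescopings into the inner-product identity above yields the exact balance
\[
\mathcal{\widetilde{E}}(\phi^{n+1})-\mathcal{\widetilde{E}}(\phi^{n})=-\Delta t\left(\mathcal{G}\mu^{n+1},\mu^{n+1}\right)-\frac14\|\phi^{n+1}-2\phi^n+\phi^{n-1}\|_{\mathcal{L}}^2+\frac32\left(R^{n+1}-\widetilde{R}^{n+1}\right),
\]
with $\mathcal{\widetilde{E}}(\phi^{n+1})=\frac14\|\phi^{n+1}\|_{\mathcal{L}}^2+\frac14\|2\phi^{n+1}-\phi^n\|_{\mathcal{L}}^2+\frac32R^{n+1}-\frac12R^n$ exactly the modified energy in the statement. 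Finally I would insert the relaxation constraint defining $\mathcal{V}$ in \eqref{RZF-BDF-e3}, which bounds $R^{n+1}-\widetilde{R}^{n+1}$ by $\kappa^{n+1}$ times the bracket $\Delta t(\mathcal{G}\mu^{n+1},\mu^{n+1})+\frac14\|\phi^{n+1}-2\phi^n+\phi^{n-1}\|_{\mathcal{L}}^2$, and collect terms to obtain $-\left(1-\frac32\kappa^{n+1}\right)$ times that same bracket, i.e. \eqref{RZF-BDF-e10}; the admissible range $\kappa^{n+1}\in[0,\frac23]$ then forces $1-\frac32\kappa^{n+1}\geq0$ and delivers the final nonpositivity.

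The step I expect to be most delicate is the bookkeeping of coefficients. The weight $\frac32$ multiplying $R^{n+1}-\widetilde{R}^{n+1}$ is dictated by the BDF2 combination $\frac32R^{n+1}-\frac12R^n$ appearing in $\mathcal{\widetilde{E}}$, and it is precisely this factor that forces both the curvature term $\frac14\|\phi^{n+1}-2\phi^n+\phi^{n-1}\|_{\mathcal{L}}^2$ inside the constraint \eqref{RZF-BDF-e3} and the restricted range $\kappa^{n+1}\in[0,\frac23]$ rather than $[0,1]$ as in the Crank--Nicolson case. Verifying that the polarization identity reproduces exactly the weights in $\mathcal{\widetilde{E}}$, and that the nonnegative residual it generates matches the curvature term in $\mathcal{V}$, is the computation that must be carried out with care.
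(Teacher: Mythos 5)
Your proposal is correct and follows essentially the same route as the paper's proof: the same BDF2 polarization identity for $\mathcal{L}$, the same telescoping $\frac12(3\widetilde{R}^{n+1}-4R^n+R^{n-1})=(\frac32\widetilde{R}^{n+1}-\frac12R^n)-(\frac32R^n-\frac12R^{n-1})$, and the same insertion of the relaxation constraint multiplied by $\frac32$ to absorb $R^{n+1}-\widetilde{R}^{n+1}$. The coefficient bookkeeping you flag as delicate checks out exactly as in the paper.
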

\begin{proof}
Firstly, taking the inner product of the first equation in the RZF-BDF2 scheme \eqref{RZF-BDF-e1} with $\Delta t\mu^{n+1}$, we could get
\begin{equation}\label{RZF-BDF-e11}
\aligned
\displaystyle \frac12\left(3\phi^{n+1}-4\phi^n+\phi^{n-1},\mu^{n+1}\right)=-\Delta t(\mathcal{G}\mu^{n+1},\mu^{n+1}).
\endaligned
\end{equation}
Secondly, by taking the inner products of the second equation in \eqref{RZF-BDF-e1} with $\frac12\left(3\phi^{n+1}-4\phi^n+\phi^{n-1}\right)$ and using the identity:
\begin{equation*}
\aligned
\displaystyle 2(a^{k+1},3a^{k+1}-4a^k+a^{k-1})=|a^{k+1}|^2+|2a^{k+1}-a^{k}|^2+|a^{k+1}-2a^k+a^{k-1}|^2-|a^{k}|^2-|2a^{k}-a^{k-1}|^2,
\endaligned
\end{equation*}
one obtains
\begin{equation}\label{RZF-BDF-e12}
\aligned
&\displaystyle \frac12\left(3\phi^{n+1}-4\phi^n+\phi^{n-1},\mu^{n+1}\right)\\
&\displaystyle=\frac12\left(\mathcal{L}\phi^{n+1},3\phi^{n+1}-4\phi^n+\phi^{n-1}\right)+\frac12\left[1+\mathcal{P}(\eta^{n+1})\right]\left({F'}(\widehat{\phi}^{n+1}),3\phi^{n+1}-4\phi^n+\phi^{n-1}\right)\\
&\displaystyle=\frac14(\mathcal{L}\phi^{n+1},\phi^{n+1})+\frac14\left(\mathcal{L}(2\phi^{n+1}-\phi^n),2\phi^{n+1}-\phi^n\right)\\
&\quad\displaystyle-\frac14(\mathcal{L}\phi^{n},\phi^{n})-\frac14\left(\mathcal{L}(2\phi^{n}-\phi^{n-1}),2\phi^{n}-\phi^{n-1}\right)\\
&\quad\displaystyle+\frac14\left(\mathcal{L}(\phi^{n+1}-2\phi^n+\phi^{n-1}),\phi^{n+1}-2\phi^n+\phi^{n-1}\right)\\
&\quad\displaystyle+\frac12\left[1+\mathcal{P}(\eta^{n+1})\right]\left({F'}(\widehat{\phi}^{n+1}),3\phi^{n+1}-4\phi^n+\phi^{n-1}\right).
\endaligned
\end{equation}
We rewrite the third equation in \eqref{RZF-BDF-e1} as follows:
\begin{equation}\label{RZF-BDF-e13}
\aligned
\displaystyle\left(\frac32\widetilde{R}^{n+1}-\frac12R^{n}\right)-\left(\frac32R^{n}-\frac12R^{n-1}\right)=\frac12\left[1+\mathcal{P}(\eta^{n+1})\right]\left({F'}(\widehat{\phi}^{n+1}),3\phi^{n+1}-4\phi^n+\phi^{n-1}\right).
\endaligned
\end{equation}
Substituting above equation \eqref{RZF-BDF-e13} into \eqref{RZF-BDF-e12} and combining it with \eqref{RZF-BDF-e11}, we could have
\begin{equation}\label{RZF-BDF-e14}
\aligned
&\frac14(\mathcal{L}\phi^{n+1},\phi^{n+1})+\frac14\left(\mathcal{L}(2\phi^{n+1}-\phi^n),2\phi^{n+1}-\phi^n\right)+\frac32\widetilde{R}^{n+1}-\frac12R^n\\
&-\frac14(\mathcal{L}\phi^{n},\phi^{n})-\frac14\left(\mathcal{L}(2\phi^{n}-\phi^{n-1}),2\phi^{n}-\phi^{n-1}\right)-\frac32R^{n}+\frac12R^{n-1}\\
&=-\left[\Delta t(\mathcal{G}\mu^{n+1},\mu^{n+1})+\frac14\left(\mathcal{L}(\phi^{n+1}-2\phi^n+\phi^{n-1}),\phi^{n+1}-2\phi^n+\phi^{n-1}\right)\right]\leq0
\endaligned
\end{equation}
From the constraint condition in \eqref{RZF-BDF-e3}, we could obtain
\begin{equation}\label{RZF-BDF-e15}
\aligned
\displaystyle \frac32R^{n+1}-\frac32\widetilde{R}^{n+1}\leq\frac32\kappa^{n+1}\left[\Delta t\left(\mathcal{G}\mu^{n+1},\mu^{n+1}\right)
+\frac14\left(\mathcal{L}(\phi^{n+1}-2\phi^n+\phi^{n-1}),\phi^{n+1}-2\phi^n+\phi^{n-1}\right)\right],
\endaligned
\end{equation}
Adding the above two equations together and noting that $\kappa^{n+1}\in[0,\frac23]$, we could have
\begin{equation}\label{RZF-BDF-e16}
\mathcal{\widetilde{E}}(\phi^{n+1})-\mathcal{\widetilde{E}}(\phi^{n})\leq-(1-\frac32\kappa^{n+1})\left[\Delta t(\mathcal{G}\mu^{n+1},\mu^{n+1})+\frac14\left(\mathcal{L}(\phi^{n+1}-2\phi^n+\phi^{n-1}),\phi^{n+1}-2\phi^n+\phi^{n-1}\right)\right]\leq0,
\end{equation}
which completes the proof.
\end{proof}
\section{The Relaxed Multiple Zero-Factor Approach}
The nonlinear free energy of many complex gradient flows contains disparate terms such that schemes with a single zero-factor may require excessively small time steps to obtain correct simulations. In this section, Inspired by the multiple SAV (MSAV) approach in \cite{cheng2018multiple}, we will give a relaxed multiple zero-factor (RMZF) approach to simulate the gradient flow with two disparate nonlinear terms:
\begin{equation}\label{RMZF-e1}
   \begin{array}{l}
\displaystyle\frac{\partial \phi}{\partial t}=-\mathcal{G}\mu,\\
\mu=\mathcal{L}\phi+F_1'(\phi)+F_2'(\phi),\\
   \end{array}
  \end{equation}
where $\mathcal{L}$ is a linear operator, $F_1'(\phi)$ and $F_2'(\phi)$ are nonlinear potential functions, $\mathcal{G}$ is
a positive or semi-positive linear operator. The above system satisfies the following energy dissipation law:
\begin{equation}\label{RMZF-e2}
\aligned
\displaystyle\frac{dE}{dt}=-(\mathcal{G}\mu,\mu)\leq0,
\endaligned
\end{equation}
where $\mu=\frac{\delta E}{\delta \phi}$ and the free energy $E(\phi)$ is
\begin{equation}\label{RMZF-e3}
E(\phi)=\frac12(\phi,\mathcal{L}\phi)+\int_\Omega F_1(\phi)d\textbf{x}+\int_\Omega F_2(\phi)d\textbf{x}.
\end{equation}
Introducing two linear zero factors $\mathcal{P}(\eta)$, $\mathcal{S}(\eta)$ of a scalar auxiliary function $\eta(t)$ and two SAVs $R_1=\int_\Omega F_1(\phi)d\textbf{x}$, $R_2=\int_\Omega F_2(\phi)d\textbf{x}$, we can rewrite the system \eqref{RMZF-e1} as the following
\begin{equation}\label{RMZF-e4}
   \begin{array}{l}
\displaystyle\frac{\partial \phi}{\partial t}=-\mathcal{G}\mu,\\
\mu=\mathcal{L}\phi+F_1'(\phi)+F_2'(\phi)+\mathcal{P}(\eta)F_1'(\phi)+\mathcal{S}(\eta)F_2'(\phi),\\
\displaystyle\frac{\partial R_1}{\partial t}=\left[1+\mathcal{P}(\eta)\right]F_1'(\phi),\\
\displaystyle\frac{\partial R_2}{\partial t}=\left[1+\mathcal{S}(\eta)\right]F_2'(\phi),\\
R_1=(F_1(\phi),1),\\
R_2=(F_2(\phi),1).
   \end{array}
\end{equation}
Here $\mathcal{P}(\eta)$ and $\mathcal{S}(\eta)$ are two linear zero factors. We can choose the following linear functionals with initial conditions:
\begin{equation}\label{RMZF-e5}
\aligned
&\displaystyle \mathcal{P}_1(\eta)=k_1\eta \text{~with~} \eta(0)=0,\quad \mathcal{P}_2(\eta)=k_2\eta_t \text{~with~} \eta(0)=c_1,\\
&\displaystyle \mathcal{S}_1(\eta)=k_3\eta \text{~with~} \eta(0)=0,\quad \mathcal{S}_2(\eta)=k_4\eta_t \text{~with~} \eta(0)=c_2,
\endaligned
\end{equation}
where $k_1$, $k_2$, $k_3$, $k_4$ are arbitrary non-zero constants and $c_1$, $c_2$ are any real numbers.

A second order Crank-Nicolson scheme based on the relaxed multiple zero-factor approach (RMZF-CN): set $R_1^0=(F_1(\phi(0,\textbf{x}),1)$, $R_2^0=(F_2(\phi(0,\textbf{x}),1)$ and compute $\phi^{n+1}$, $R^{n+1}$ via the following two steps:

\textbf{Step I}: Compute $\phi^{n+1}$, $\widetilde{R}_1^{n+1}$ and $\widetilde{R}_2^{n+1}$ by the following semi-implicit Crank-Nicolson scheme:
\begin{equation}\label{RMZF-CN-e1}
   \begin{array}{lrl}
&\displaystyle\frac{\overline{\phi}^{n+1}-\phi^n}{\Delta t}&=-\mathcal{G}\mu^{n+\frac12},\\
&\displaystyle\mu^{n+\frac12}&=\left[\frac12\mathcal{L}\overline{\phi}^{n+1}+\frac12\mathcal{L}\phi^{n}+F_1'(\widehat{\phi}^{n+\frac12})++F_2'(\widehat{\phi}^{n+\frac12})\right],\\
&\displaystyle\phi^{n+1}&=\overline{\phi}^{n+1}+\mathcal{P}(\eta^{n+\frac12})q_1^{n+1}+\mathcal{S}(\eta^{n+\frac12})q_2^{n+1},\\
&\displaystyle (\widetilde{R}_1^{n+1}-R_1^n)+(\widetilde{R}_2^{n+1}-R_2^n)&=\displaystyle\left[1+\mathcal{P}(\eta^{n+\frac12})\right]\left(F_1'(\widehat{\phi}^{n+\frac12}),\phi^{n+1}-\phi^n\right)\\
&&\quad+\left[1+\mathcal{S}(\eta^{n+\frac12})\right]\left(F_2'(\widehat{\phi}^{n+\frac12}),\phi^{n+1}-\phi^n\right),\\
\displaystyle &\widetilde{R}_1^{n+1}&=\left(F_1(\overline{\phi}^{n+1}),1\right),\quad\widetilde{R}_2^{n+1}=\left(F_2(\overline{\phi}^{n+1}),1\right).
   \end{array}
\end{equation}
where $\widehat{\phi}^{n+\frac12}=\frac32\phi^n-\frac12\phi^{n-1}$, and
\begin{equation*}
\aligned
q_1^{n+1}=-\Delta tA^{-1}\mathcal{G}F_1'(\widehat{\phi}^{n+\frac12}),\quad q_2^{n+1}=-\Delta tA^{-1}\mathcal{G}F_2'(\widehat{\phi}^{n+\frac12}).
\endaligned
\end{equation*}
Here the coefficient matrix $A=(I+\frac12\Delta t\mathcal{G}\mathcal{L})$.

\textbf{Step II}: Update the scalar auxiliary variable $R_1^{n+1}$ and $R_2^{n+1}$ via a relaxation step as
\begin{equation}\label{RMZF-CN-e2}
R_1^{n+1}=\lambda_0\widetilde{R}_1^{n+1}+(1-\lambda_0)\left(F_1(\phi^{n+1}),1\right),\quad R_2^{n+1}=\lambda_0\widetilde{R}_2^{n+1}+(1-\lambda_0)\left(F_2(\phi^{n+1}),1\right),\quad \lambda_0\in\mathcal{V}.
\end{equation}
Here is $\mathcal{V}$ a set defined by
\begin{equation}\label{RMZF-CN-e3}
\aligned
\mathcal{V}=\left\{\lambda|\lambda\in[0,1]~s.t.~\right.
&(R_1^{n+1}+R_2^{n+1})-(\widetilde{R}_1^{n+1}+\widetilde{R}_2^{n+1})\leq\Delta t\kappa^{n+1}\left(\mathcal{G}\mu^{n+\frac12},\mu^{n+\frac12}\right),\\
&\left.R_1^{n+1}=\lambda\widetilde{R}_1^{n+1}+(1-\lambda)\left(F_1(\phi^{n+1}),1\right),~ R_2^{n+1}=\lambda\widetilde{R}_2^{n+1}+(1-\lambda)\left(F_2(\phi^{n+1}),1\right)\right\}.
\endaligned
\end{equation}
Here, $\kappa^{n+1}\in[0,1]$ will be given below.

Substituting the third equation in \eqref{RMZF-CN-e1} into the fourth equation to obtain
\begin{equation}\label{RMZF-CN-e4}
\aligned
\displaystyle(\widetilde{R}_1^{n+1}-R_1^n)+(\widetilde{R}_2^{n+1}-R_2^n)=
&\displaystyle\left[1+\mathcal{P}(\eta^{n+\frac12})\right]\left(F_1'(\widehat{\phi}^{n+\frac12}),\overline{\phi}^{n+1}+\mathcal{P}(\eta^{n+\frac12})q_1^{n+1}+\mathcal{S}(\eta^{n+\frac12})q_2^{n+1}-\phi^n\right)\\
&+\displaystyle\left[1+\mathcal{S}(\eta^{n+\frac12})\right]\left(F_2'(\widehat{\phi}^{n+\frac12}),\overline{\phi}^{n+1}+\mathcal{P}(\eta^{n+\frac12})q_1^{n+1}+\mathcal{S}(\eta^{n+\frac12})q_2^{n+1}-\phi^n\right).
\endaligned
\end{equation}
Noting that both $\mathcal{P}(\eta)$ and $\mathcal{S}(\eta)$ are all linear functionals of $\eta$, we immediately obtain that the above equation is a quadratic equation with one unknown for $\eta^{n+\frac12}$. It means we can obtain determine $\eta^{n+\frac12}$ explicitly from \eqref{RMZF-CN-e4}, namely,
\begin{equation}\label{RMZF-CN-e5}
\displaystyle\eta^{n+\frac12}=\frac{-b\pm\sqrt{b^2-4ac}}{2a}.
\end{equation}
Here, The values of the coefficients $a$, $b$ and $c$ depend on the choice of the zero factors $\mathcal{P}(\eta)$ and $\mathcal{S}(\eta)$. For example, if we set $\mathcal{P}(\eta)=\mathcal{P}_1(\eta)=k_1\eta$, and $\mathcal{S}(\eta)=\mathcal{S}_2(\eta)=k_4\eta_t$, then we have $\mathcal{P}(\eta^{n+\frac12})=k_1\eta^{n+\frac12}$ and $\mathcal{S}(\eta^{n+\frac12})=k_4\frac{\eta^{n+1}-\eta^n}{\Delta t}$. The coefficients $a$, $b$ and $c$ can be given as follows:
\begin{equation*}
\aligned
&a=\left(k_1{F'}_1(\widehat{\phi}^{n+\frac12})+\frac{k_4}{\Delta t}{F'}_2(\widehat{\phi}^{n+\frac12}),k_1q_1^{n+1}+\frac{k_4}{\Delta t}q_2^{n+1}\right),\\
&b=\left(k_1{F'}_1(\widehat{\phi}^{n+\frac12})+\frac{k_4}{\Delta t}{F'}_2(\widehat{\phi}^{n+\frac12}),\overline{\phi}^{n+1}-\phi^n-\frac{k_4}{\Delta t}\eta^nq_2^{n+1}\right)\\
&\qquad+\left({F'}_1(\widehat{\phi}^{n+\frac12})+(1-\frac{k_4}{\Delta t}\eta^n){F'}_2(\widehat{\phi}^{n+\frac12}),k_1q_1^{n+1}+\frac{k_4}{\Delta t}q_2^{n+1}\right),\\
&c=-(\widetilde{R}_1^{n+1}-R_1^n)-(\widetilde{R}_2^{n+1}-R_2^n)+\left({F'}_1(\widehat{\phi}^{n+\frac12})+(1-\frac{k_4}{\Delta t}\eta^n){F'}_2(\widehat{\phi}^{n+\frac12}),\overline{\phi}^{n+1}-\phi^n-\frac{k_4}{\Delta t}\eta^nq_2^{n+1}\right).
\endaligned
\end{equation*}

Next, we will show that how to obtain the optimal choice for the relaxation parameter $\lambda_0$. Setting $\widetilde{R}^{n+1}=\widetilde{R}_1^{n+1}+\widetilde{R}_2^{n+1}$, $R^{n+1}=R_1^{n+1}+R_2^{n+1}$ and $E_1(\phi^{n+1})=(F_(\phi^{n+1})+F_2(\phi^{n+1}),1)$, we observe that the optimal relaxation parameter $\lambda_0$ is the solution of the following optimization problem:
\begin{equation}\label{RMZF-CN-e7}
\aligned
\lambda_0=\min\limits_{\lambda\in[0,1]}\lambda\quad s.t.~\left[\widetilde{R}^{n+1}-E_1(\phi^{n+1})\right]\lambda\leq\left[\widetilde{R}^{n+1}-E_1(\phi^{n+1})\right]+
\Delta t\kappa^{n+1}\left(\mathcal{G}\mu^{n+\frac12},\mu^{n+\frac12}\right).
\endaligned
\end{equation}
The next theorem summarizes the choice of $\lambda_0$ and $\kappa^{n+1}$:\\
\begin{theorem}\label{RMZF-CN-th1}
If $\widetilde{R}^{n+1}-E_1(\phi^{n+1})\neq0$, setting $\alpha=\frac{
\Delta t\left(\mathcal{G}\mu^{n+\frac12},\mu^{n+\frac12}\right)}{\left|\widetilde{R}^{n+1}-E_1(\phi^{n+1})\right|}$, then we can choose the optimal relaxation parameter $\lambda_0$ and $\kappa^{n+1}$ as follows:
\begin{enumerate}
\item[1.] If $\widetilde{R}^{n+1}\geq E_1(\phi^{n+1})$, we set $\lambda_0=0$ and $\kappa^{n+1}=0$;
\item[2.] If $\widetilde{R}^{n+1}<E_1(\phi^{n+1})$ and $\alpha\geq1$, we set $\lambda_0=0$ and $\kappa^{n+1}=\frac1\alpha$;
\item[3.] If $\widetilde{R}^{n+1}<E_1(\phi^{n+1})$ and $\alpha\in[0,1)$, we set $\lambda_0=1-\alpha$ and $\kappa^{n+1}=1$.
\end{enumerate}
\end{theorem}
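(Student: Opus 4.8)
The plan is to observe that, once the two relaxation identities in \eqref{RMZF-CN-e2} are summed, the multiple zero-factor constraint \eqref{RMZF-CN-e3} collapses onto exactly the single-variable optimization already resolved in Theorem \ref{RZF-CN-th1}. Writing $\widetilde{R}^{n+1}=\widetilde{R}_1^{n+1}+\widetilde{R}_2^{n+1}$, $R^{n+1}=R_1^{n+1}+R_2^{n+1}$ and $E_1(\phi^{n+1})=(F_1(\phi^{n+1})+F_2(\phi^{n+1}),1)$, both components in \eqref{RMZF-CN-e2} share the \emph{same} relaxation parameter $\lambda$, so adding them shows that the admissible set \eqref{RMZF-CN-e3} is governed by the single scalar inequality appearing in \eqref{RMZF-CN-e7}. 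Hence it suffices to solve the one-dimensional linear program in $\lambda$, and the distinction between one and two nonlinear terms disappears at this stage.

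First I would introduce the shorthand $D=\widetilde{R}^{n+1}-E_1(\phi^{n+1})$ for the energy defect and $P=\Delta t(\mathcal{G}\mu^{n+\frac12},\mu^{n+\frac12})\ge 0$ for the nonnegative dissipation, so that the constraint in \eqref{RMZF-CN-e7} reads simply $D\lambda\le D+\kappa^{n+1}P$, with $\alpha=P/|D|$ whenever $D\neq 0$. Since the objective is to minimise $\lambda$ over $[0,1]$, the strategy in each case is to select $\kappa^{n+1}\in[0,1]$ so that the feasible interval for $\lambda$ extends as far to the left as possible.

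Then I would split on the sign of $D$, mirroring the three cases in the proof of Theorem \ref{RZF-CN-th1}. If $D=0$ the constraint becomes $0\le\kappa^{n+1}P$, true for every $\lambda\in[0,1]$, so $\lambda_0=0$ with $\kappa^{n+1}=0$. If $D>0$, dividing by $D$ gives $\lambda\le 1+\kappa^{n+1}\alpha$; taking $\kappa^{n+1}=0$ makes $\lambda\le 1$ hold automatically on $[0,1]$, so again $\lambda_0=0$. If $D<0$, dividing by the negative quantity $D$ reverses the inequality to $\lambda\ge 1-\kappa^{n+1}\alpha$, and the minimiser over $[0,1]$ is exactly $\lambda_0=\max\{0,\,1-\kappa^{n+1}\alpha\}$.

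The only step demanding genuine care is this last case, where $\kappa^{n+1}$ must itself be chosen, subject to $\kappa^{n+1}\in[0,1]$, to drive $\lambda_0$ as low as possible while keeping the energy estimate valid. When $\alpha\ge 1$ the choice $\kappa^{n+1}=1/\alpha\in(0,1]$ is admissible and forces the lower bound $1-\kappa^{n+1}\alpha$ down to $0$, yielding $\lambda_0=0$; when $\alpha\in[0,1)$ no admissible $\kappa^{n+1}$ can reach $0$, so $1-\kappa^{n+1}\alpha$ is minimised by taking $\kappa^{n+1}$ as large as the constraint allows, namely $\kappa^{n+1}=1$, giving $\lambda_0=1-\alpha$. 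This recovers the three stated alternatives, and because the reduction above is exact, the remainder of the argument is verbatim that of Theorem \ref{RZF-CN-th1}.
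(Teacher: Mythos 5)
Your proposal is correct and follows essentially the same route as the paper: the paper itself performs the reduction to the summed quantities $\widetilde{R}^{n+1}$, $R^{n+1}$, $E_1(\phi^{n+1})$ when it sets up the optimization problem \eqref{RMZF-CN-e7}, and then (tacitly) relies on the case analysis of Theorem \ref{RZF-CN-th1}, which is exactly the three-way split on the sign of $D=\widetilde{R}^{n+1}-E_1(\phi^{n+1})$ and the choice of $\kappa^{n+1}$ that you carry out. Your write-up merely makes explicit the summation step that justifies the collapse to a single scalar constraint, which the paper leaves implicit.
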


\begin{theorem}\label{RMZF-CN-th2}
The second-order Crank-Nicolson scheme \eqref{RMZF-CN-e1}-\eqref{RMZF-CN-e2} based on the RMZF approach with the above choice of $\lambda_0$ and $\kappa^{n+1}$ is unconditionally energy stable in the sense that
\begin{equation}\label{RMZF-CN-e8}
\mathcal{\widetilde{E}}(\phi^{n+1})-\mathcal{\widetilde{E}}(\phi^{n})\leq-\Delta t(1-\kappa^{n+1})(\mathcal{G}\mu^{n+\frac12},\mu^{n+\frac12})\leq0,
\end{equation}
where $\mathcal{\widetilde{E}}(\phi^{n+1})=\frac12(\mathcal{L}\phi^{n+1},\phi^{n+1})+R_1^{n+1}+R_2^{n+1}$ and we further have
\begin{equation*}
\aligned
\mathcal{E}(\phi^{n+1})\leq\mathcal{E}(\phi^{n}),
\endaligned
\end{equation*}
under the condition of $\widetilde{R}^{n+1}\geq E_1(\phi^{n+1})$ or $\widetilde{R}^{n+1}<E_1(\phi^{n+1})$ with $\alpha\geq1$. Here $\mathcal{E}(\phi^{n})$ is the original energy where $\mathcal{E}(\phi^{n+1})=\frac12(\mathcal{L}\phi^{n+1},\phi^{n+1})+(F_1(\phi^{n+1}),1)+(F_2(\phi^{n+1}),1)$. If $\widetilde{R}^{n+1}<E_1(\phi^{n+1})$ and $\alpha\in[0,1)$, we could have
\begin{equation*}
\aligned
\mathcal{\widetilde{E}}(\phi^{n+1})\leq\mathcal{E}(\phi^{n+1}).
\endaligned
\end{equation*}
\end{theorem}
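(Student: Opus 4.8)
The plan is to follow the proof of Theorem~\ref{RZF-CN-th2} essentially verbatim, with the single scalar auxiliary variable $R$ replaced by the sum $R_1+R_2$ and the single nonlinear contribution replaced by the two linear factors $\mathcal{P}(\eta^{n+\frac12})$ and $\mathcal{S}(\eta^{n+\frac12})$. First I would eliminate $\overline{\phi}^{n+1}$ from the first three equations of \eqref{RMZF-CN-e1}. Using the third equation $\phi^{n+1}=\overline{\phi}^{n+1}+\mathcal{P}(\eta^{n+\frac12})q_1^{n+1}+\mathcal{S}(\eta^{n+\frac12})q_2^{n+1}$ together with $A q_i^{n+1}=-\Delta t\mathcal{G}F_i'(\widehat{\phi}^{n+\frac12})$ and the definition of $\overline{\phi}^{n+1}$, and applying $A=(I+\frac12\Delta t\mathcal{G}\mathcal{L})$ on both sides, I would show that the first three equations are equivalent to the compact scheme
\begin{equation*}
\aligned
&\frac{\phi^{n+1}-\phi^n}{\Delta t}=-\mathcal{G}\mu^{n+\frac12},\\
&\mu^{n+\frac12}=\frac12\mathcal{L}\phi^{n+1}+\frac12\mathcal{L}\phi^{n}+\left[1+\mathcal{P}(\eta^{n+\frac12})\right]F_1'(\widehat{\phi}^{n+\frac12})+\left[1+\mathcal{S}(\eta^{n+\frac12})\right]F_2'(\widehat{\phi}^{n+\frac12}),
\endaligned
\end{equation*}
which is the two-factor analogue of \eqref{RZF-CN-e9}.

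Next I would derive the exact identity for the modified energy. Taking the inner product of the first compact equation with $\mu^{n+\frac12}$ and of the second with $-\frac{\phi^{n+1}-\phi^n}{\Delta t}$, the cross terms $\pm(\mu^{n+\frac12},\phi^{n+1}-\phi^n)$ cancel, the quadratic term yields $\frac12\left[(\mathcal{L}\phi^{n+1},\phi^{n+1})-(\mathcal{L}\phi^{n},\phi^{n})\right]$ by symmetry of $\mathcal{L}$, and the nonlinear term yields precisely $\left[1+\mathcal{P}(\eta^{n+\frac12})\right](F_1'(\widehat{\phi}^{n+\frac12}),\phi^{n+1}-\phi^n)+\left[1+\mathcal{S}(\eta^{n+\frac12})\right](F_2'(\widehat{\phi}^{n+\frac12}),\phi^{n+1}-\phi^n)$. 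The crux is that the fourth equation of \eqref{RMZF-CN-e1} was designed so that this last expression equals $(\widetilde{R}_1^{n+1}-R_1^n)+(\widetilde{R}_2^{n+1}-R_2^n)$; substituting it (after multiplying through by $\Delta t$) gives
\begin{equation*}
\left[\frac12(\mathcal{L}\phi^{n+1},\phi^{n+1})+\widetilde{R}_1^{n+1}+\widetilde{R}_2^{n+1}\right]-\left[\frac12(\mathcal{L}\phi^{n},\phi^{n})+R_1^n+R_2^n\right]=-\Delta t(\mathcal{G}\mu^{n+\frac12},\mu^{n+\frac12})\leq0,
\end{equation*}
the exact analogue of \eqref{RZF-CN-e10}.

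Then I would invoke the constraint defining $\mathcal{V}$ in \eqref{RMZF-CN-e3}, namely $(R_1^{n+1}+R_2^{n+1})-(\widetilde{R}_1^{n+1}+\widetilde{R}_2^{n+1})\leq\Delta t\kappa^{n+1}(\mathcal{G}\mu^{n+\frac12},\mu^{n+\frac12})$, and add it to the identity above. Since $\kappa^{n+1}\in[0,1]$, this produces $\mathcal{\widetilde{E}}(\phi^{n+1})-\mathcal{\widetilde{E}}(\phi^{n})\leq-\Delta t(1-\kappa^{n+1})(\mathcal{G}\mu^{n+\frac12},\mu^{n+\frac12})\leq0$ with $\mathcal{\widetilde{E}}(\phi^{n})=\frac12(\mathcal{L}\phi^{n},\phi^{n})+R_1^n+R_2^n$, which is \eqref{RMZF-CN-e8}. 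For the comparison with the original energy I would sum the two relations in Step~II \eqref{RMZF-CN-e2} to get $R_1^{n+1}+R_2^{n+1}=\lambda_0\widetilde{R}^{n+1}+(1-\lambda_0)E_1(\phi^{n+1})$, so that $\mathcal{\widetilde{E}}(\phi^{n+1})-\mathcal{E}(\phi^{n+1})=\lambda_0\left[\widetilde{R}^{n+1}-E_1(\phi^{n+1})\right]$. Feeding in the choice of $\lambda_0$ from Theorem~\ref{RMZF-CN-th1} gives $\lambda_0=0$ (hence equality $\mathcal{\widetilde{E}}=\mathcal{E}$) in the first two cases, and $\lambda_0\geq0$ with $\widetilde{R}^{n+1}-E_1(\phi^{n+1})<0$ (hence $\mathcal{\widetilde{E}}(\phi^{n+1})\leq\mathcal{E}(\phi^{n+1})$) in the third; chaining $\mathcal{E}(\phi^{n+1})=\mathcal{\widetilde{E}}(\phi^{n+1})\leq\mathcal{\widetilde{E}}(\phi^{n})\leq\mathcal{E}(\phi^{n})$ delivers the original-energy dissipation exactly as in \eqref{RZF-CN-e14}.

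I do not expect a genuine obstacle; the only delicate point is the bookkeeping of two auxiliary variables and two zero factors that share a common $\eta$. Everything hinges on the observation that the two factors enter the $R$-update additively, so that after summing $R_1$ and $R_2$ the argument collapses to the single-factor case of Theorem~\ref{RZF-CN-th2}. I would emphasize that the telescoping works \emph{regardless} of how $\mathcal{P}$ and $\mathcal{S}$ are individually chosen, because only the combination $\left[1+\mathcal{P}\right](F_1',\,\cdot\,)+\left[1+\mathcal{S}\right](F_2',\,\cdot\,)$ appears in the energy estimate, and that is by construction the right-hand side of the fourth equation in \eqref{RMZF-CN-e1}.
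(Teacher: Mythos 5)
Your proposal is correct and is exactly the argument the paper intends: the paper states Theorem \ref{RMZF-CN-th2} without proof precisely because it is the routine extension of the proof of Theorem \ref{RZF-CN-th2}, obtained by collapsing $R_1+R_2$ into a single variable and absorbing the two zero factors into the combined nonlinear term $\left[1+\mathcal{P}\right]F_1'+\left[1+\mathcal{S}\right]F_2'$, which is by construction the right-hand side of the $R$-update. Your compact reformulation, the telescoping energy identity, the use of the constraint defining $\mathcal{V}$, and the case analysis on $\lambda_0$ all match the single-factor proof step for step.
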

\section{Examples and discussion}
In this section, we implement the proposed Crank-Nicolson scheme based on the relaxed zero-factor approach (RZF-CN) and BDF2 scheme based on the zero-factor approach with relaxation (RZF-BDF2) and apply them to several classical gradient flow models include the Allen-Cahn model, the Cahn-Hilliard model, and the phase-field crystal model. In all considered examples, we consider the periodic boundary conditions and use a Fourier spectral method in space.

\subsection{Allen-Cahn model}
Consider the following Lyapunov energy functional:
\begin{equation}\label{section5_energy1}
E(\phi)=\int_{\Omega}(\frac{1}{2}|\nabla \phi|^2+\frac{1}{4\epsilon^2}(\phi^2-1)^2d\textbf{x},
\end{equation}
Given $\mathcal{G}=I$, then the gradient flow model in \eqref{intro-e2} reduces to the corresponding Allen-Cahn equation
\begin{equation}\label{section5_AC_model}
\displaystyle\frac{\partial \phi}{\partial t}=M\left(\Delta \phi-\frac{1}{\epsilon^2}(\phi^3-\phi)\right),\quad(\textbf{x},t)\in\Omega\times [0,T],
\end{equation}

For above Allen-Cahn model, given the following initial condition:
\begin{equation*}
\aligned
\phi(x,y,0)=0.001\cos(x)\cos(y),
\endaligned
\end{equation*}
in domain $\Omega=[0,2\pi]^2$. We use $128^2$ Fourier-spectral modes in space and set model parameters $T=1$, $\epsilon=0.4$, $M=1$. We consider Crank-Nicolson SAV scheme, the proposed second-order RZF-CN and RZF-BDF2 schemes to obtain the numerical error and convergence rates for above example. We set the zero factor $\mathcal{P}(\eta)=\eta_t$ in the RZF method. The results are shown in Table \ref{tab:tab1} which indicate that all convergence rates are consistent with the theoretical results. We also observe that the RZF-CN method is more accurate than the baseline SAV-CN method. The energy curves are plotted for both SAV-CN and RZF-CN schemes with $\Delta t=0.01$ in Figure \ref{fig:fig1}. It can be observed from this figure that the computed energy for both schemes decays with time. Figure \ref{fig:fig1} also indicates that our proposed RZF method is a very efficient way for preserving the consistency between the modified energy and the original energy. In the following, we also give the detailed results of the energy error and the introduced zero factor $\mathcal{P}(\eta)$. From Figure \ref{fig:fig2}, we observe that the RZF-CN scheme provides less error between $\mathcal{\widetilde{E}}$ and $E(\phi)$ than the RZF-BDF2 schemes. The values of the zero factor $\mathcal{P}(\eta)$ are very close to zero for both RZF-CN and RZF-BDF2 schemes.
\begin{table}[h!b!p!]
\small
\centering
\caption{\small The $L^\infty$ errors, convergence rates at $T=1$ for the second-order RZF-CN and RZF-BDF2 schemes.}\label{tab:tab1}
\begin{tabular}{|c|c|c|c|c|c|c|c|c|}
\hline
&\multicolumn{2}{c|}{SAV-CN}&\multicolumn{2}{c|}{RZF-CN}&\multicolumn{2}{c|}{RZF-BDF2}\\
\cline{1-7}
$\Delta t$&Error&Rate&Error&Rate&Error&Rate\\
\cline{1-7}
$5\times10^{-2}$        &2.1972e-2   &---   &1.2748e-2   &---   &3.0129e-2   &---   \\
$2.5\times10^{-2}$      &6.3229e-3   &1.7970&3.5123e-3   &1.8597&9.7308e-3   &1.6305\\
$1.25\times10^{-2}$     &1.6715e-3   &1.9194&9.1399e-4   &1.9421&2.7363e-3   &1.8303\\
$6.25\times10^{-3}$     &4.2834e-4   &1.9643&2.3249e-4   &1.9750&7.2166e-4   &1.9228\\
$3.125\times10^{-3}$    &1.0859e-4   &1.9798&5.8549e-5   &1.9894&1.8486e-4   &1.9649\\
\hline
\end{tabular}
\end{table}
\begin{figure}[htp]
\centering
\subfigure[The energy evolution for SAV-CN and RZF-CN schemes]{
\includegraphics[width=8cm,height=8cm]{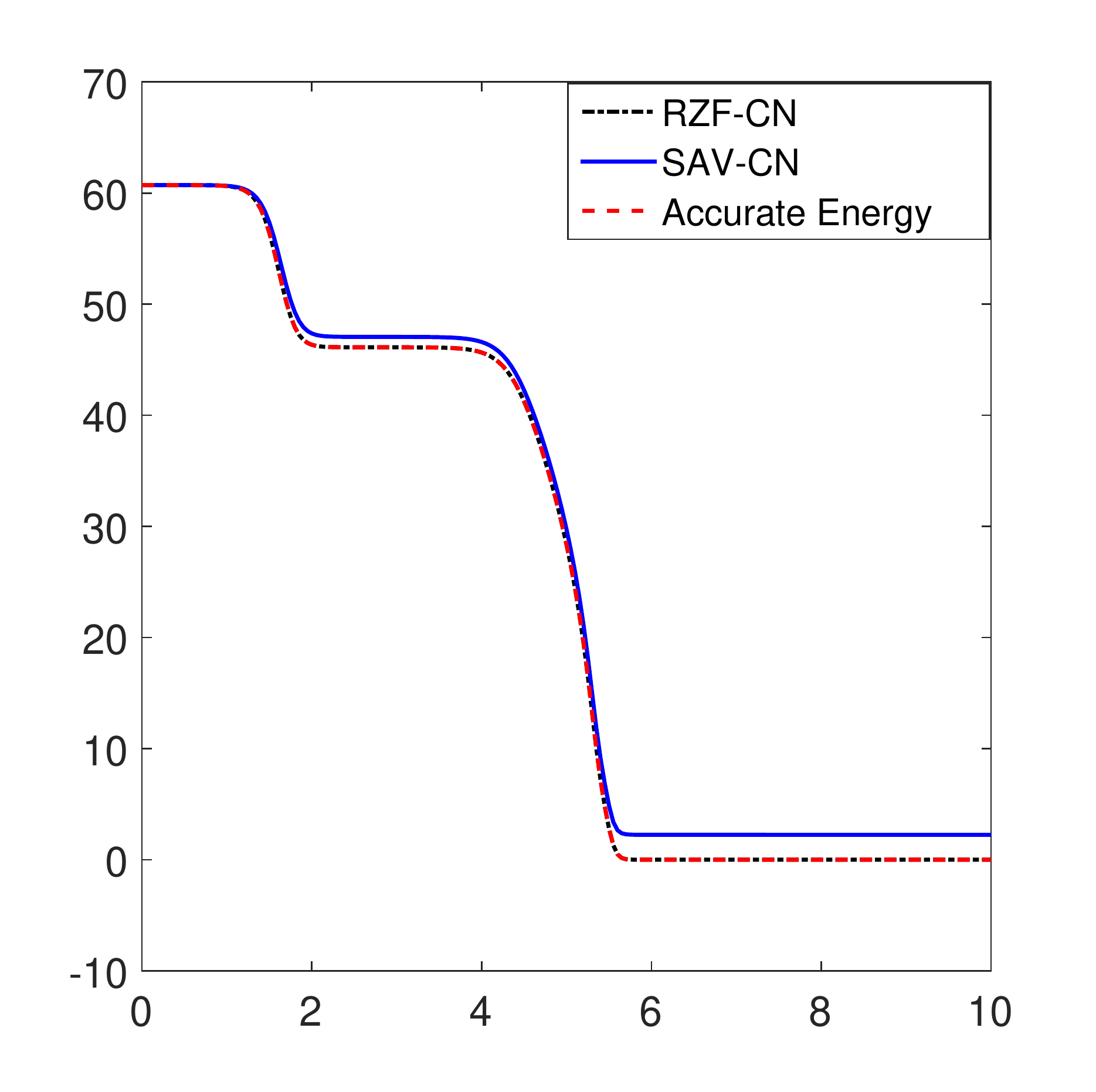}}
\subfigure[The energy error for SAV-CN and RZF-CN schemes]{
\includegraphics[width=8cm,height=8cm]{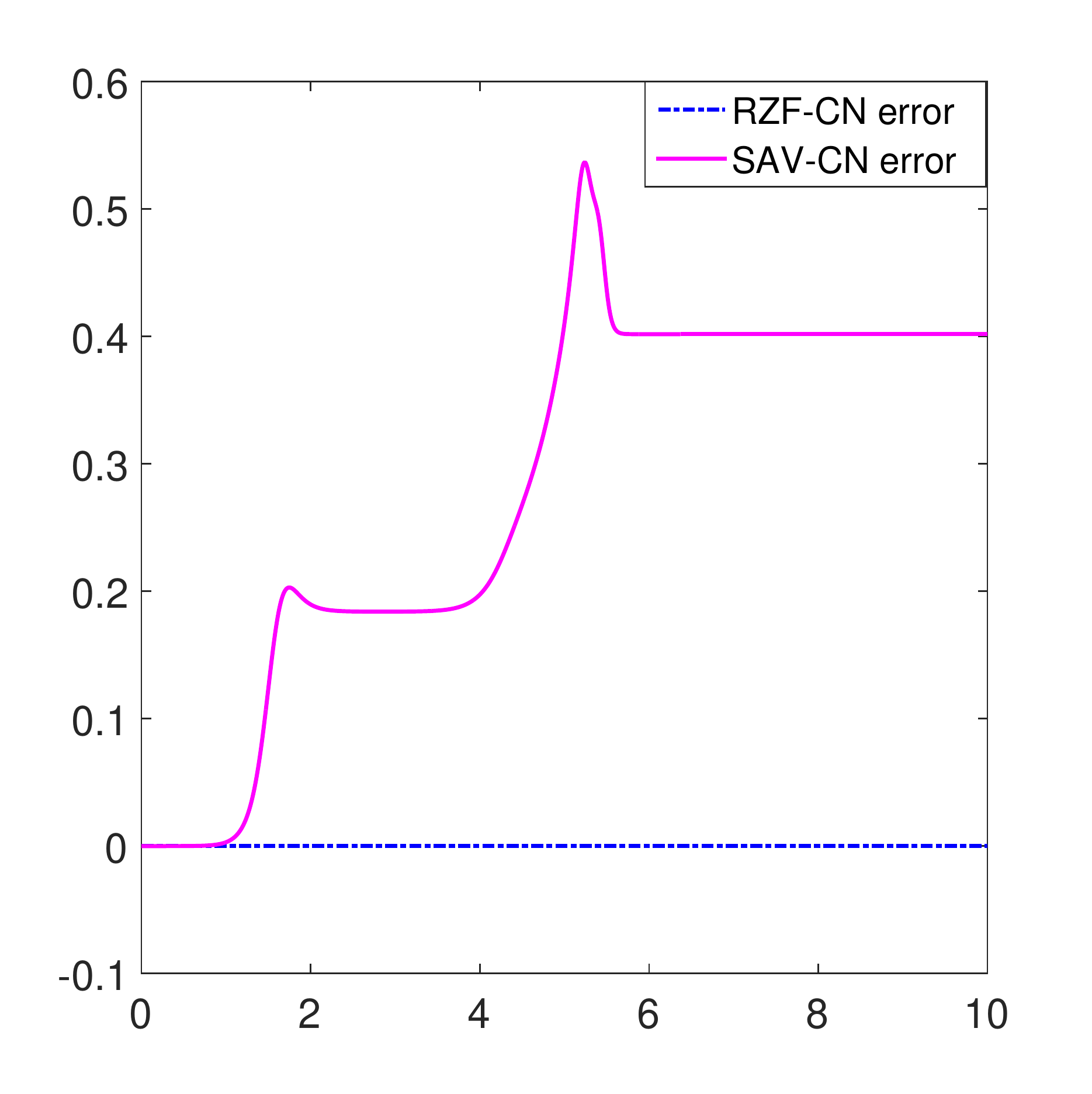}}
\caption{A comparison of the SAV-CN, and RZF-CN methods in solving the Allen-Cahn equation. (a) the numerical energies
using the SAV-CN and the RZF-CN schemes with $\Delta t=0.01$. (b) Numerical results of $\mathcal{\widetilde{E}}(\phi)-E(\phi)$ using the SAV-CN and the RZF-CN schemes with $\Delta t=0.01$.}\label{fig:fig1}
\end{figure}
\begin{figure}[htp]
\centering
\subfigure[MZF-CN and RZF-BDF2: numerical results of $\mathcal{\widetilde{E}}(\phi)-E(\phi)$]{
\includegraphics[width=8cm,height=8cm]{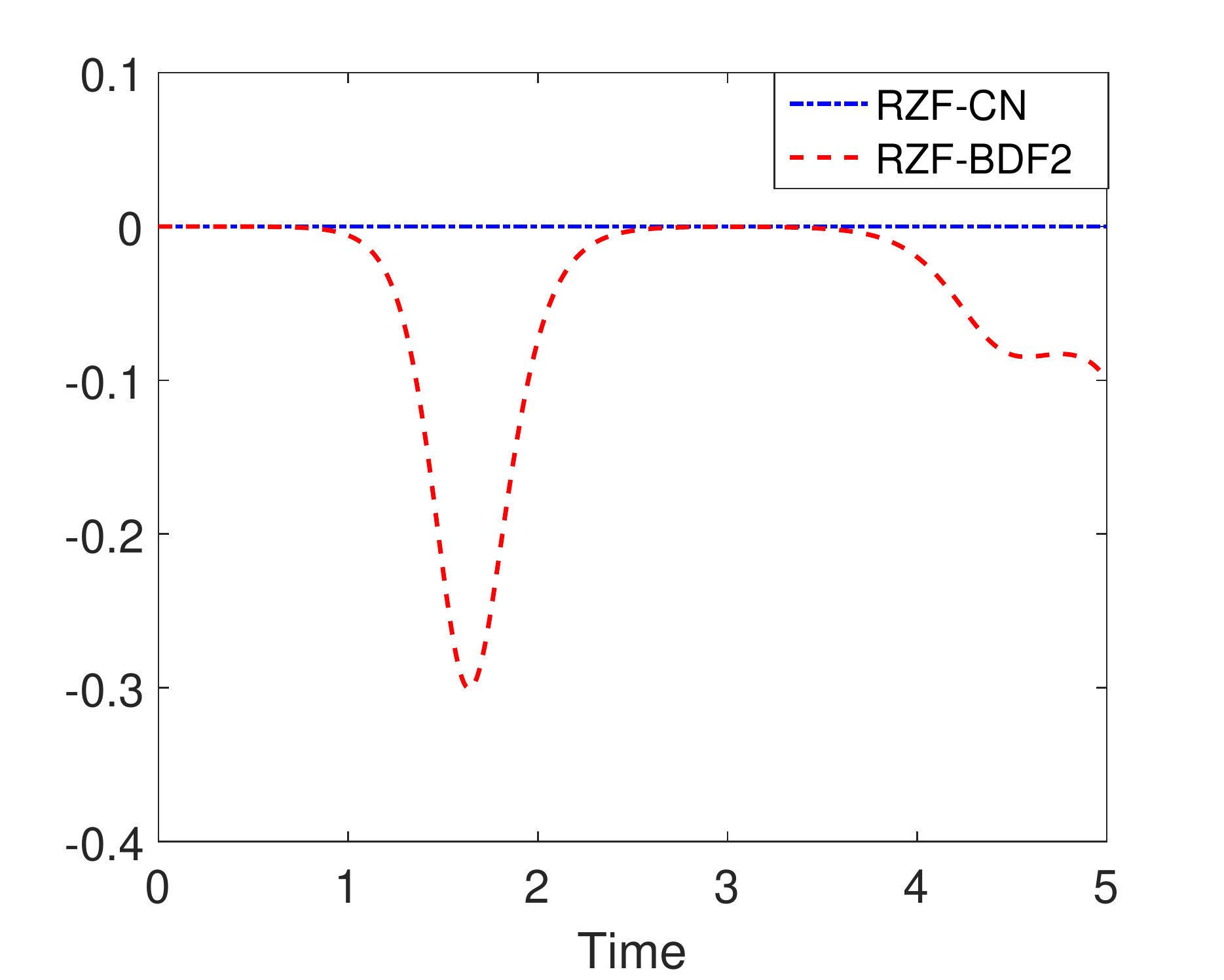}}
\subfigure[Numerical results of the zero factor $\mathcal{P}(\eta)$]{
\includegraphics[width=8cm,height=8cm]{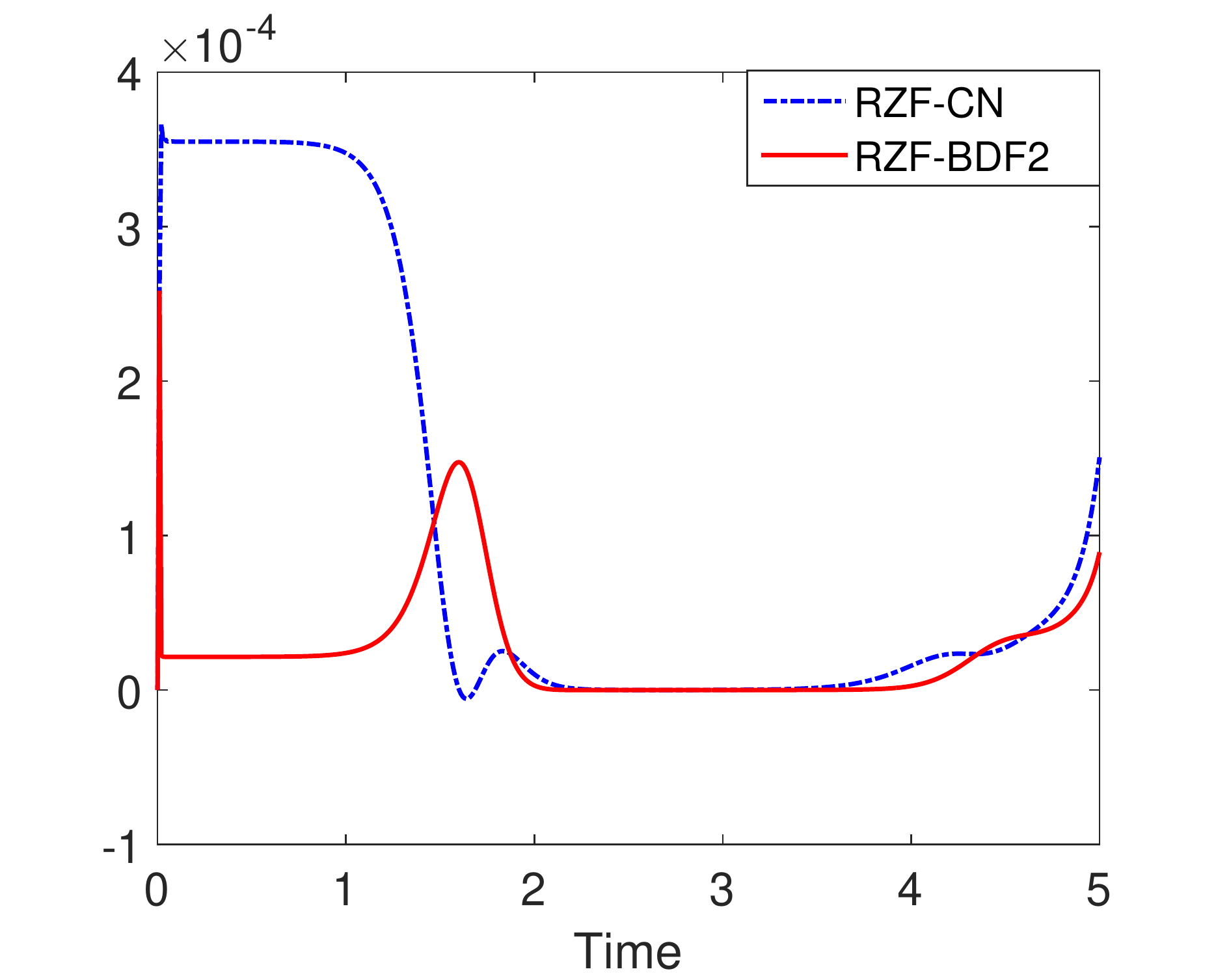}}
\caption{A comparison of SAV-CN and RZF-CN methods in solving the Allen-Cahn equation. (a) Numerical results of $\mathcal{\widetilde{E}}(\phi)-E(\phi)$ using the MZF-CN and the MZF-BDF2 schemes with $\Delta t=0.01$. (b) Numerical results of the zero factor $\mathcal{P}(\eta)$ using the RZF-CN and RZF-BDF2 schemes with $\Delta t=0.01$.}\label{fig:fig2}
\end{figure}

We perform the numerical test to the Allen-Cahn equation \eqref{section5_AC_model} with the following initial conditions in 2D and 3D,
\begin{eqnarray}
&&\phi(x,y,0)=\tanh\left(\frac{1.7+1.2\cos(6\theta)-\sqrt{x^2+y^2}}{\sqrt{2}\epsilon}\right),\label{AC-initial-e1}\\
&&\phi(x,y,z,0)=\tanh\left(\frac{\sqrt{(x-0.5)^2+(y-0.5)^2+(z-0.5)^2}-R_0}{\sqrt{2}\epsilon}\right),\label{AC-initial-e2}
\end{eqnarray}

For the initial condition \eqref{AC-initial-e1}, we set $\theta=tan^{-1}(y/x)$ for $(x,y)\in[-\pi,\pi]\times[-\pi,\pi]$ and $\epsilon=0.05$.
We choose $256\times256$ Fourier modes to discretize the space and use the time step $\Delta t=0.001$.  Figure \ref{fig:fig3-1} shows the numerical test results at $t=0$, $0.2$, $0.4$ and $1$ to the Allen-Cahn model using the RZF-CN method. These results are consistent with
those in \cite{yoon2020fourier}. In Figure \ref{fig:fig3-2}, we present a comparison of energy (a), the nonlinear free energy $(F(\phi),1)$ and $R^{n+1}$ (b) and the zero factor $\mathcal{P}(\eta)=\eta$ (c) of RZF-CN scheme. It is obvious to see that the proposed RZF method dissipates the almost original energy. From (b) and (c) in Figure \ref{fig:fig3-2}, we observe that the values of the zero factor are essentially zero except at a few time steps for this example. The trend of the value of zero factor $\mathcal{P}(\eta)$ is highly consistent with the variation of the nonlinear free energy which implies that the introduced zero factor $\mathcal{P}(\eta)$ is essential and very efficient to capture the sharp dissipation of the nonlinear free energy.

For the initial condition \eqref{AC-initial-e2}, we set $\Omega=[0,1]^3$ with $R_0=0.3$, $\epsilon=0.02$, $T=3.5$, $M=0.01$ and give $\Delta t=0.01$. We discretize the space by the Fourier spectral method with $128\times128\times128$ modes. The snapshots of zero level set to the numerical solutions using the RZF-CN method are shown in Figure \ref{fig:fig3-3}. The simulation results depict the motion by mean curvature property and non-conservation of mass very well.
\begin{figure}[htp]
\centering
\includegraphics[width=4cm,height=3.5cm]{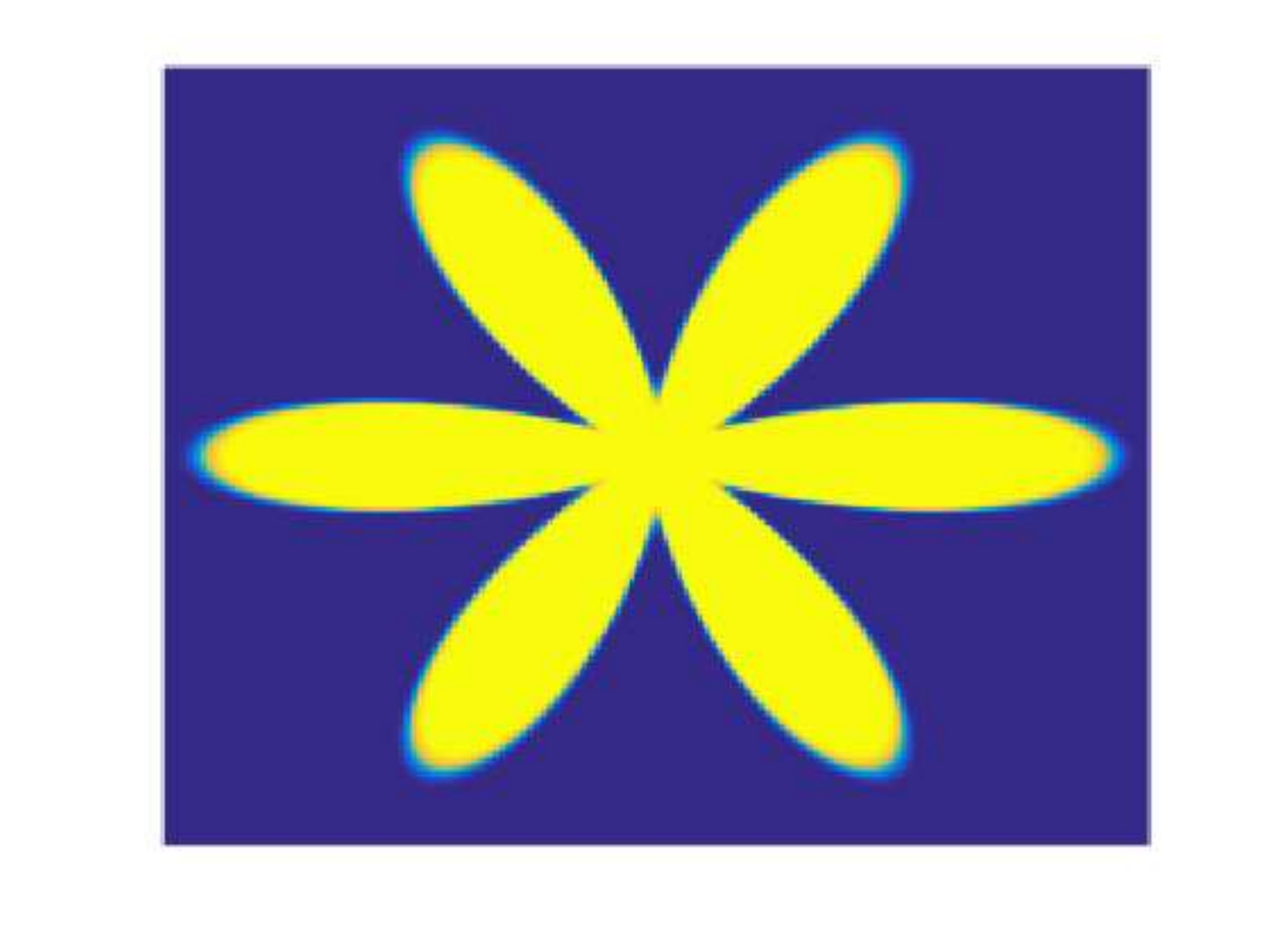}
\includegraphics[width=4cm,height=3.5cm]{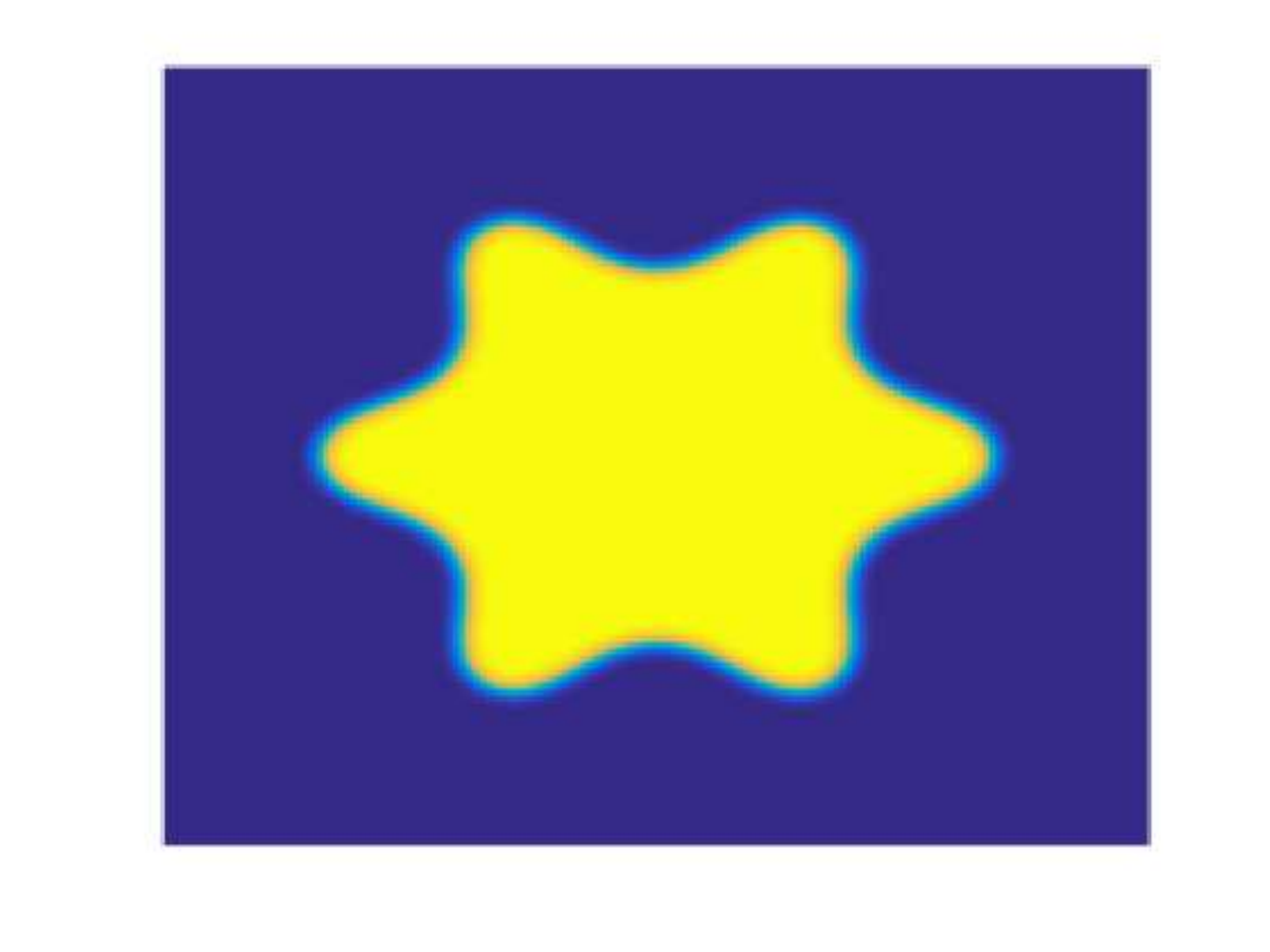}
\includegraphics[width=4cm,height=3.5cm]{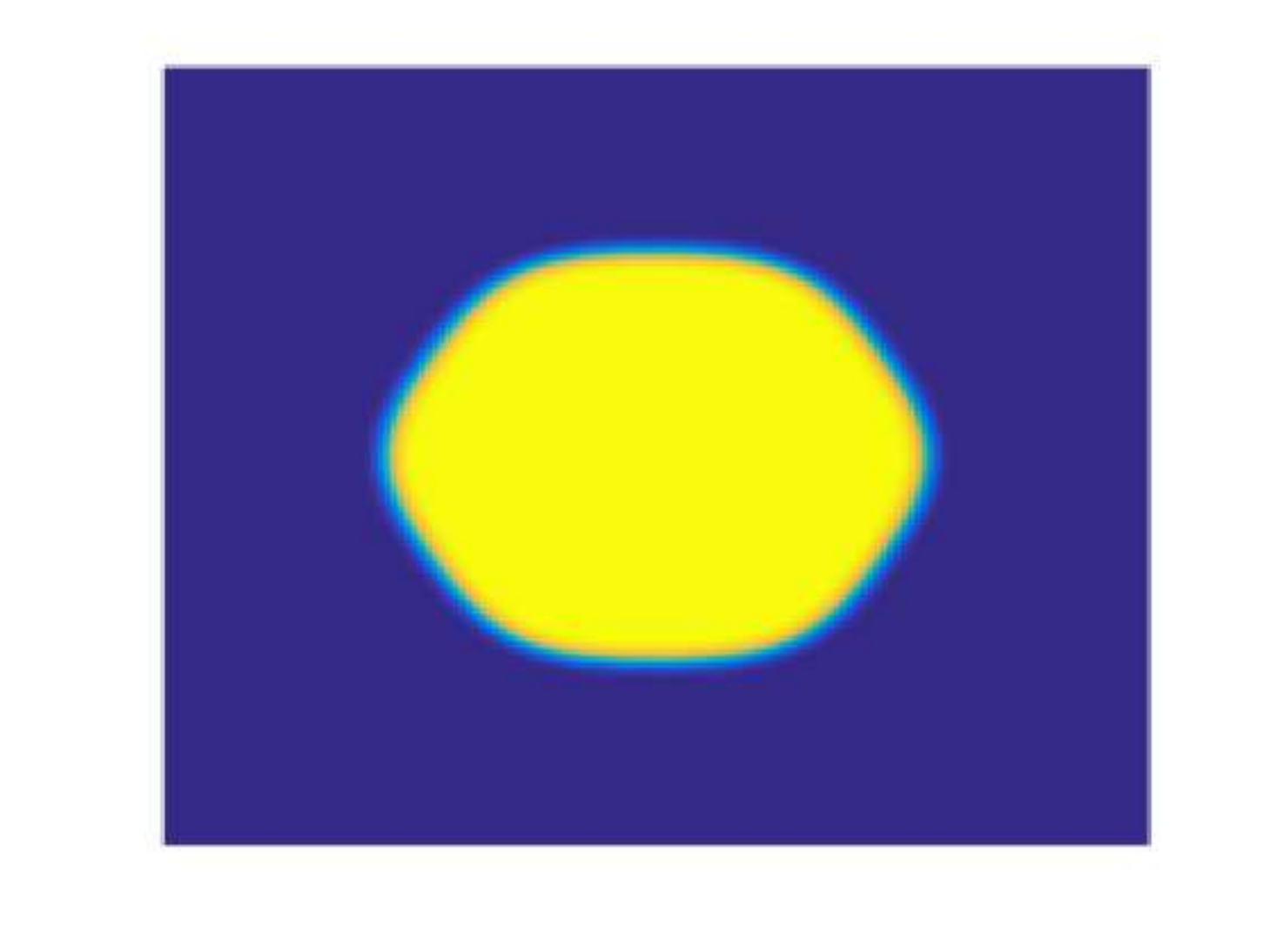}
\includegraphics[width=4cm,height=3.5cm]{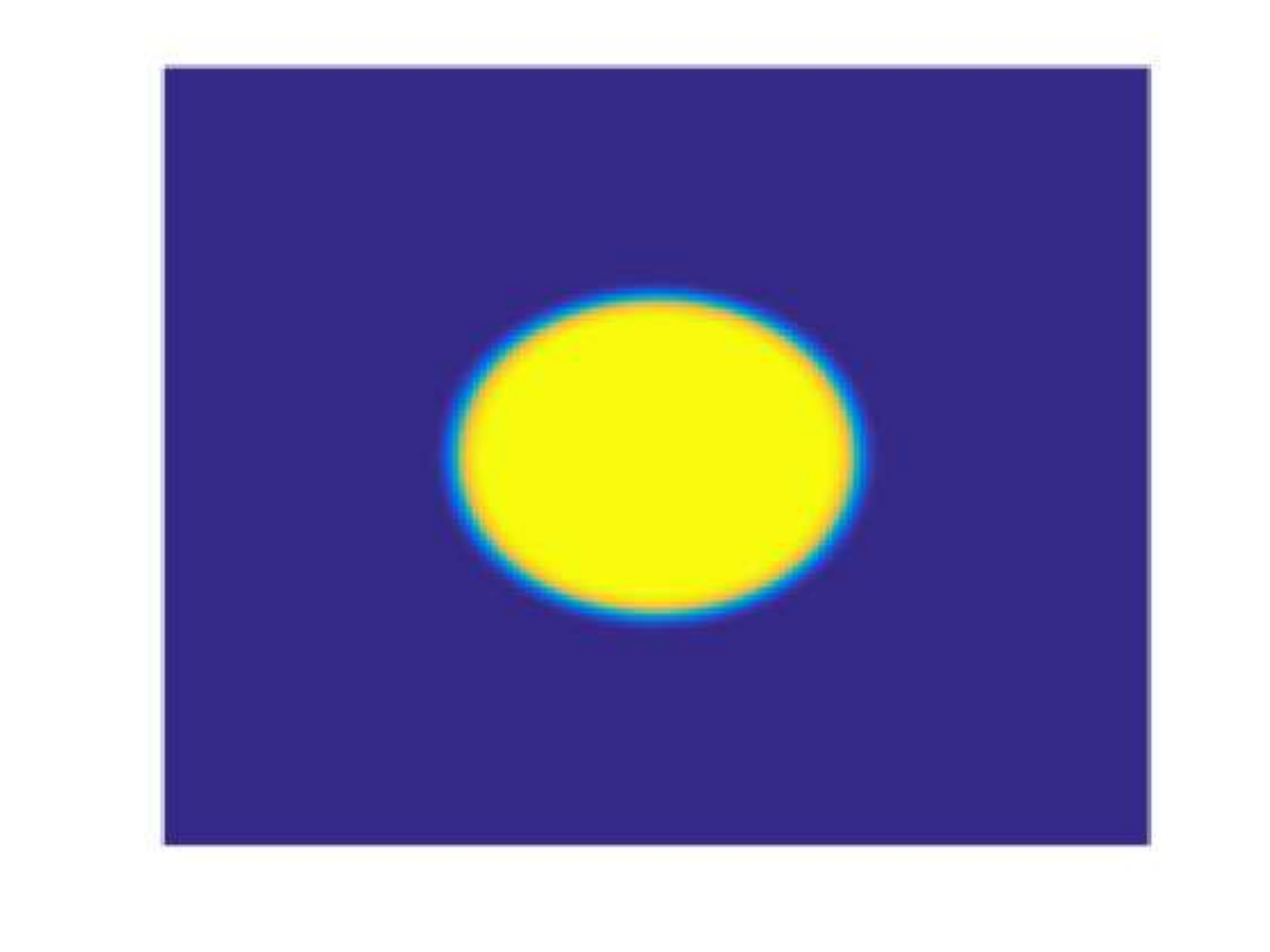}
\caption{Snapshots of zero level set to the numerical solutions of Allen-Cahn model with the initial condition \eqref{AC-initial-e1} at t=0, 0.2, 0.4, 1.}\label{fig:fig3-1}
\end{figure}
\begin{figure}[htp]
\centering
\subfigure[Energy of $\mathcal{\widetilde{E}}(\phi)$ and $E(\phi)$]{
\includegraphics[width=5cm,height=5cm]{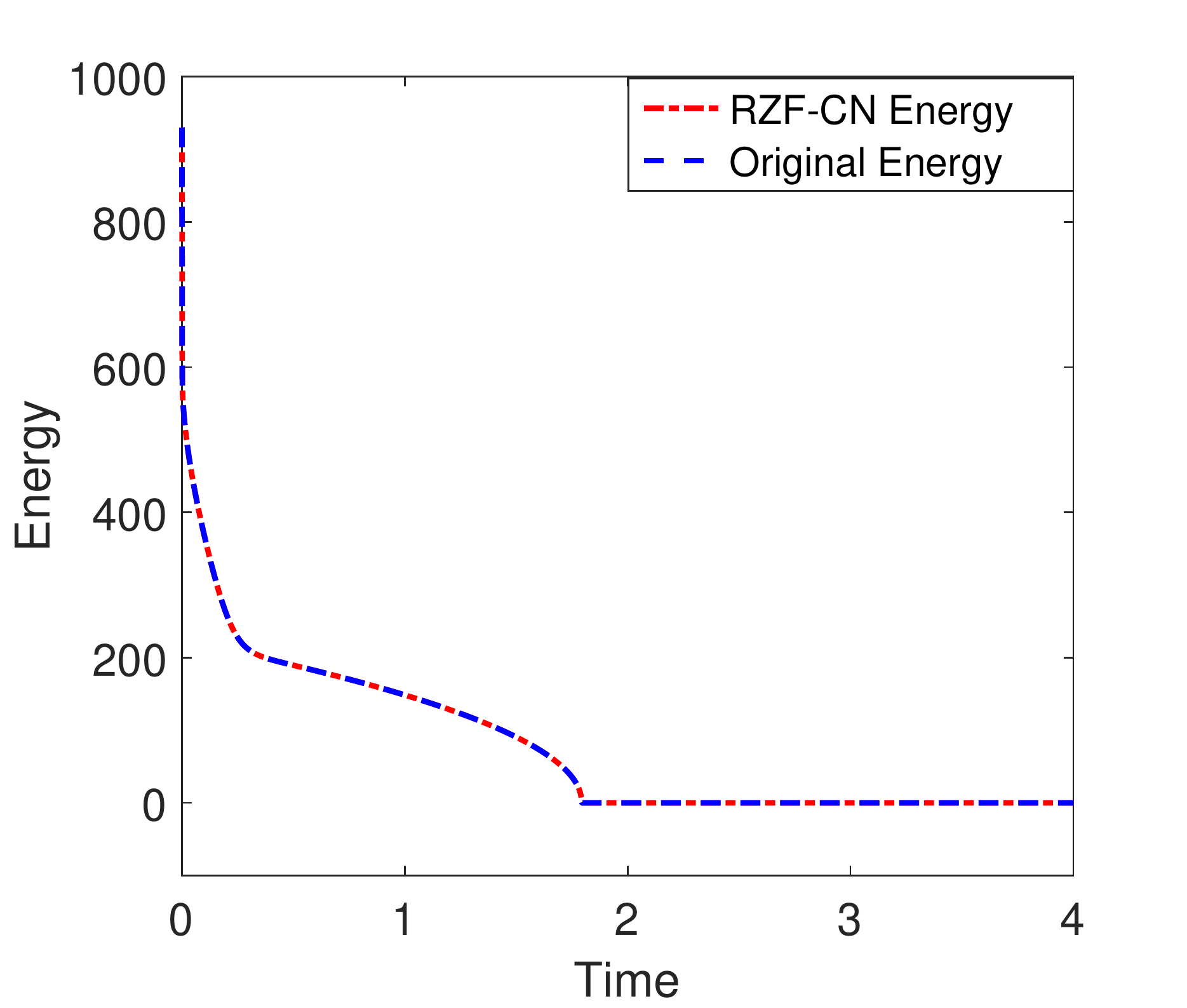}}
\subfigure[$R^{n+1}$ and $\left((F(\phi^{n+1}),1\right)$]{
\includegraphics[width=5cm,height=5cm]{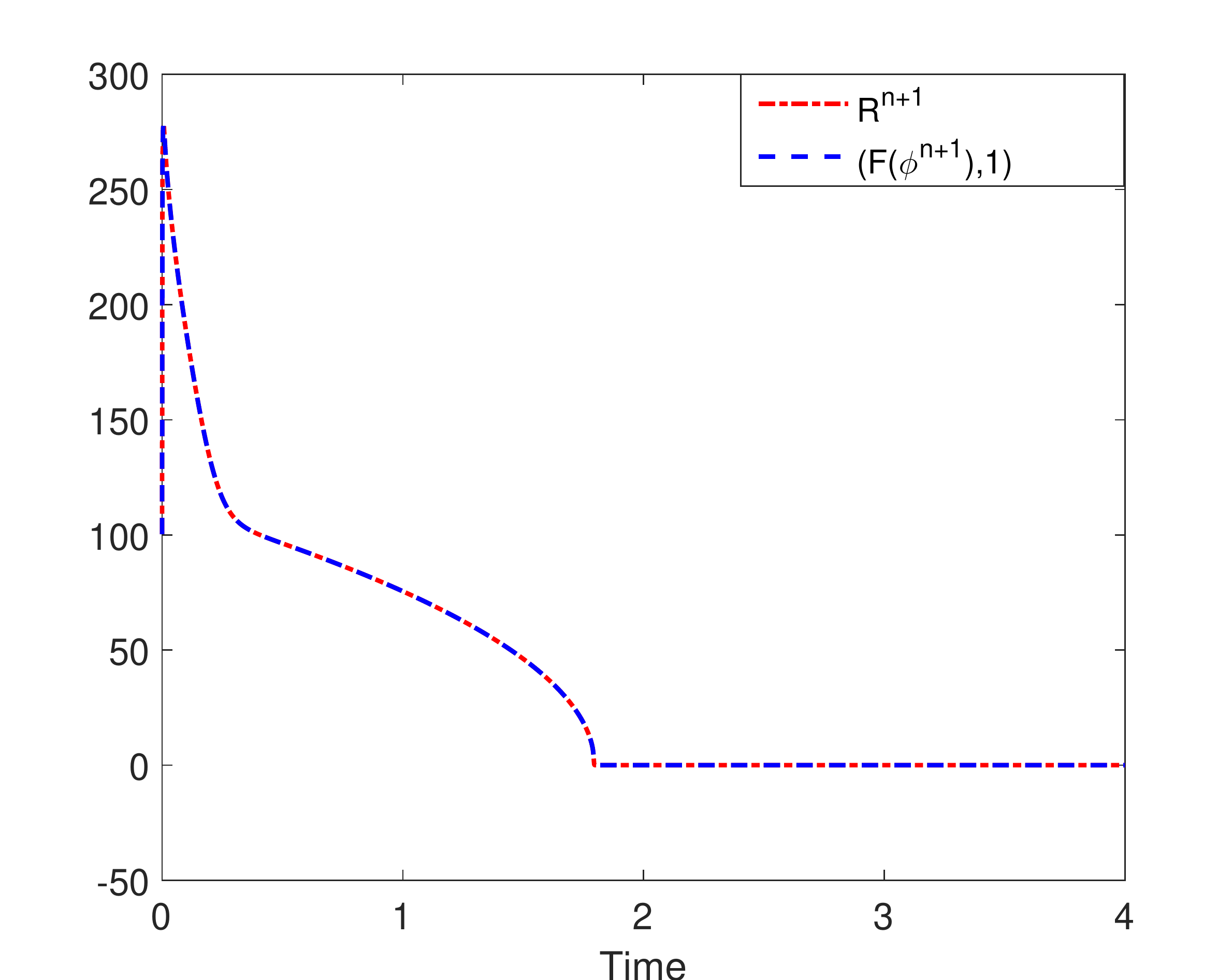}}
\subfigure[Numerical results of $\eta$]{
\includegraphics[width=5cm,height=5cm]{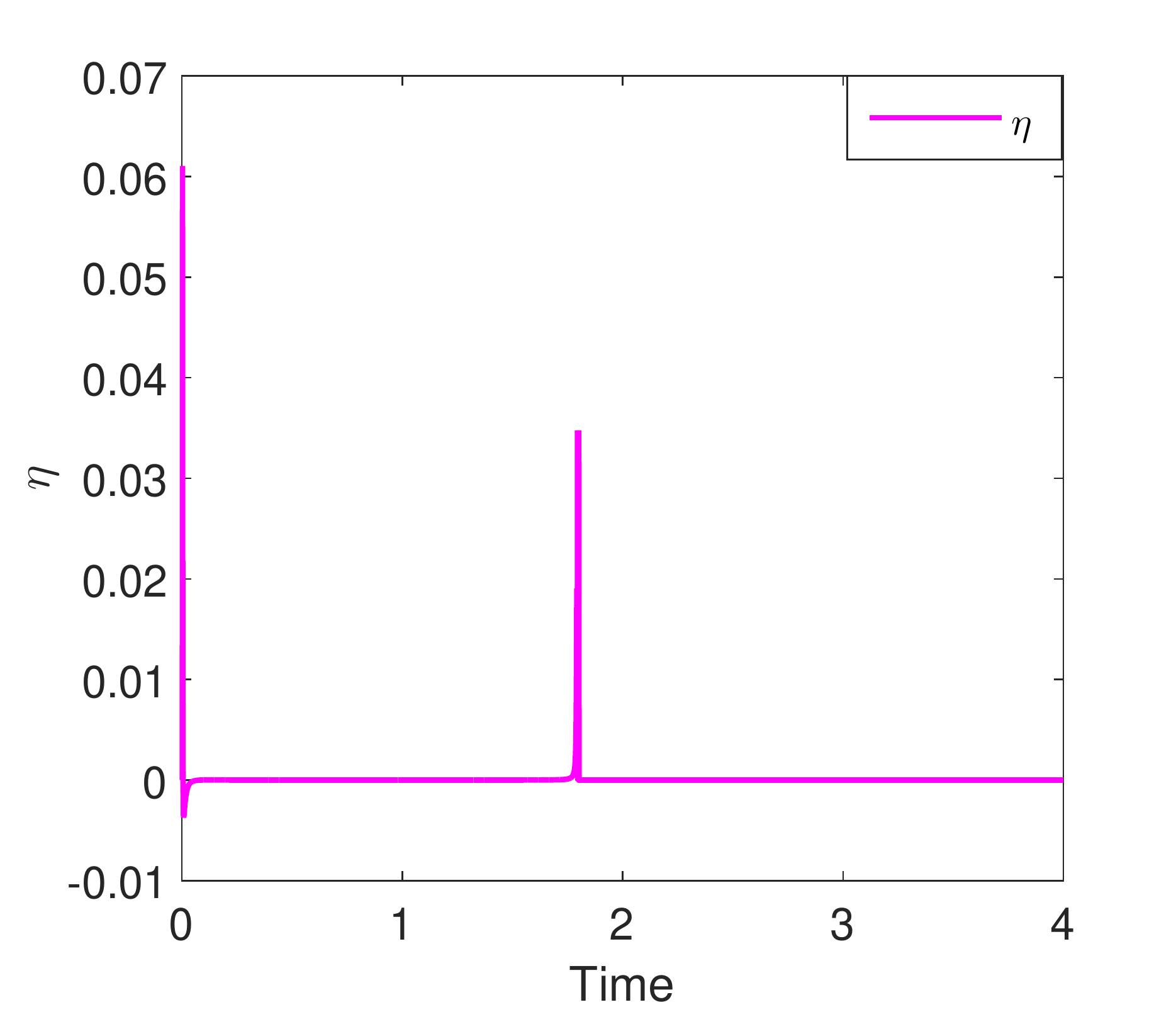}}
\caption{(a) the comparisons of numerical energies between $\mathcal{\widetilde{E}}(\phi)$ and the original energy $E(\phi)$, (b) the numerical results of $R^{n+1}$ and $\left((F(\phi^{n+1}),1\right)$ and (c) the evolution of $\eta(t)$ of Allen-Cahn model with the initial condition \eqref{AC-initial-e1}.}\label{fig:fig3-2}
\end{figure}
\begin{figure}[htp]
\centering
\includegraphics[width=4cm,height=3.5cm]{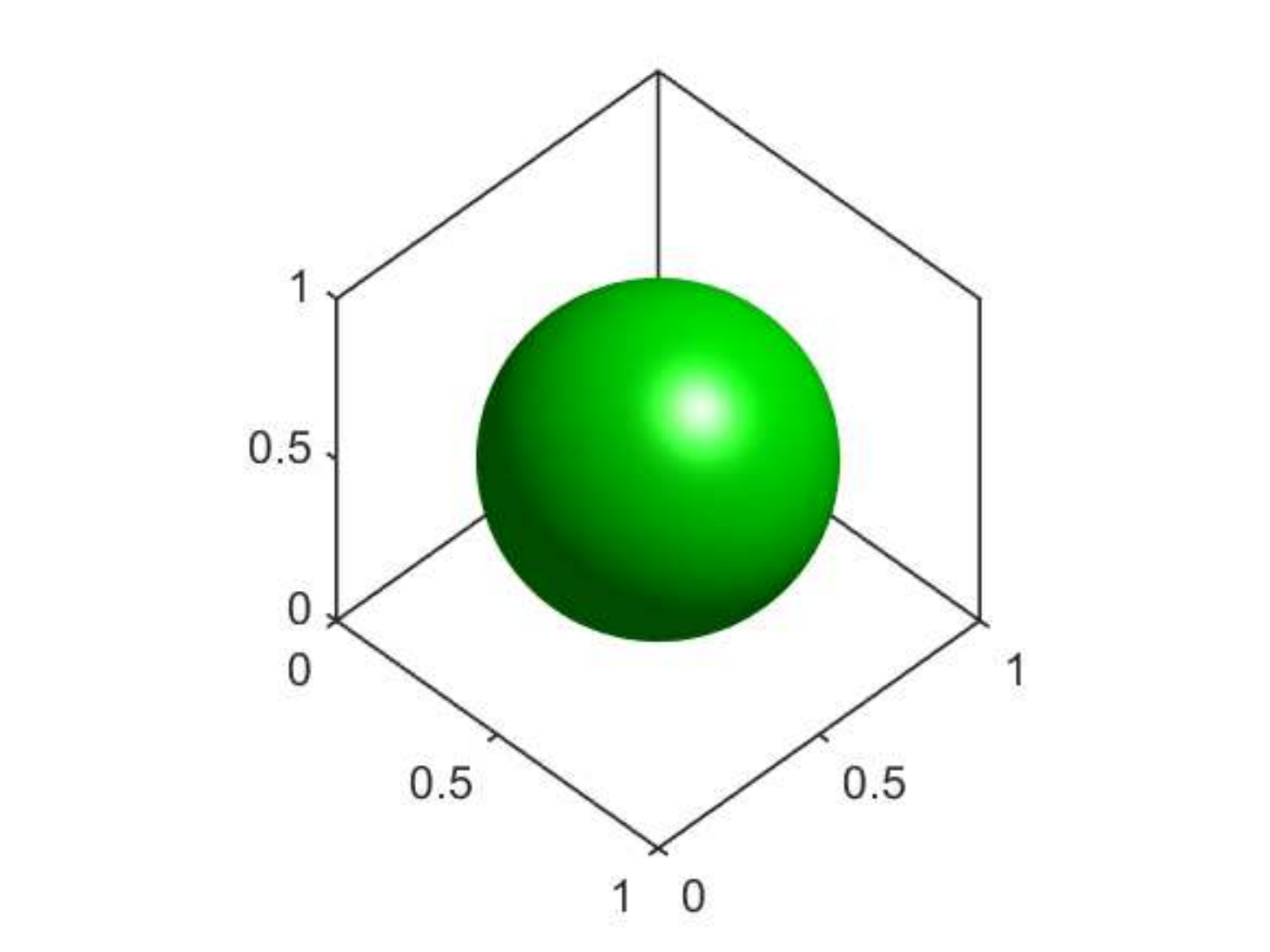}
\includegraphics[width=4cm,height=3.5cm]{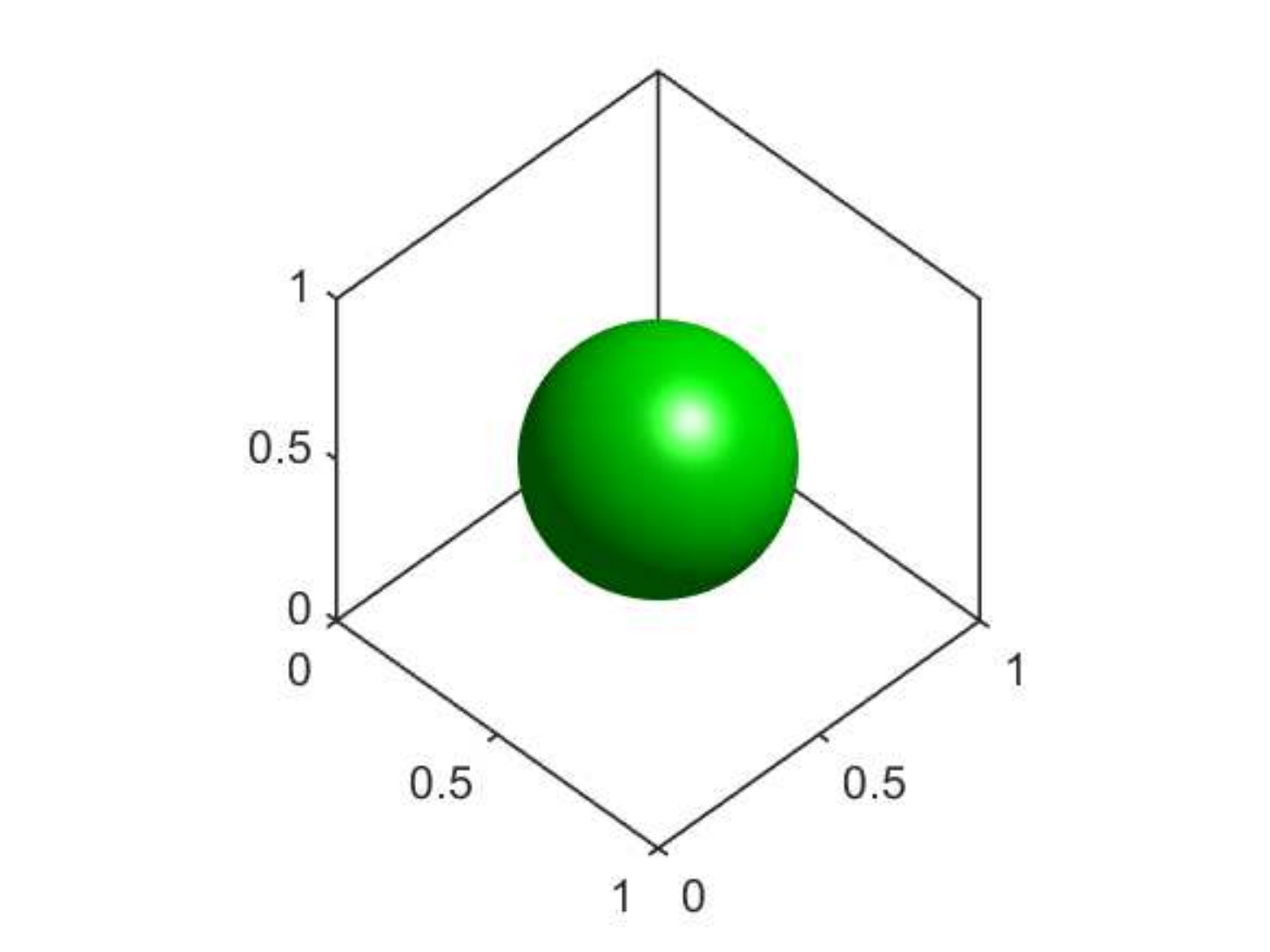}
\includegraphics[width=4cm,height=3.5cm]{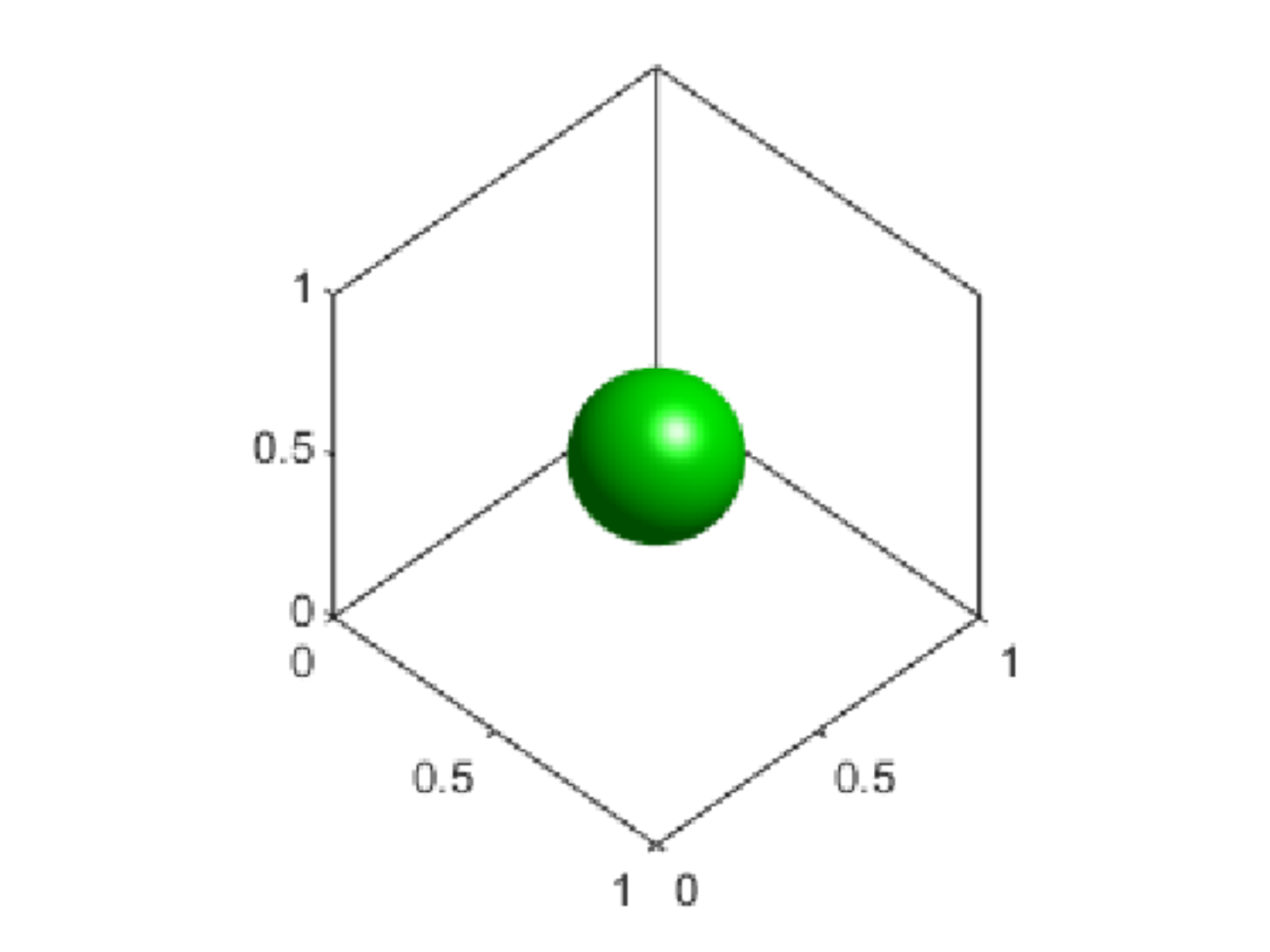}
\includegraphics[width=4cm,height=3.5cm]{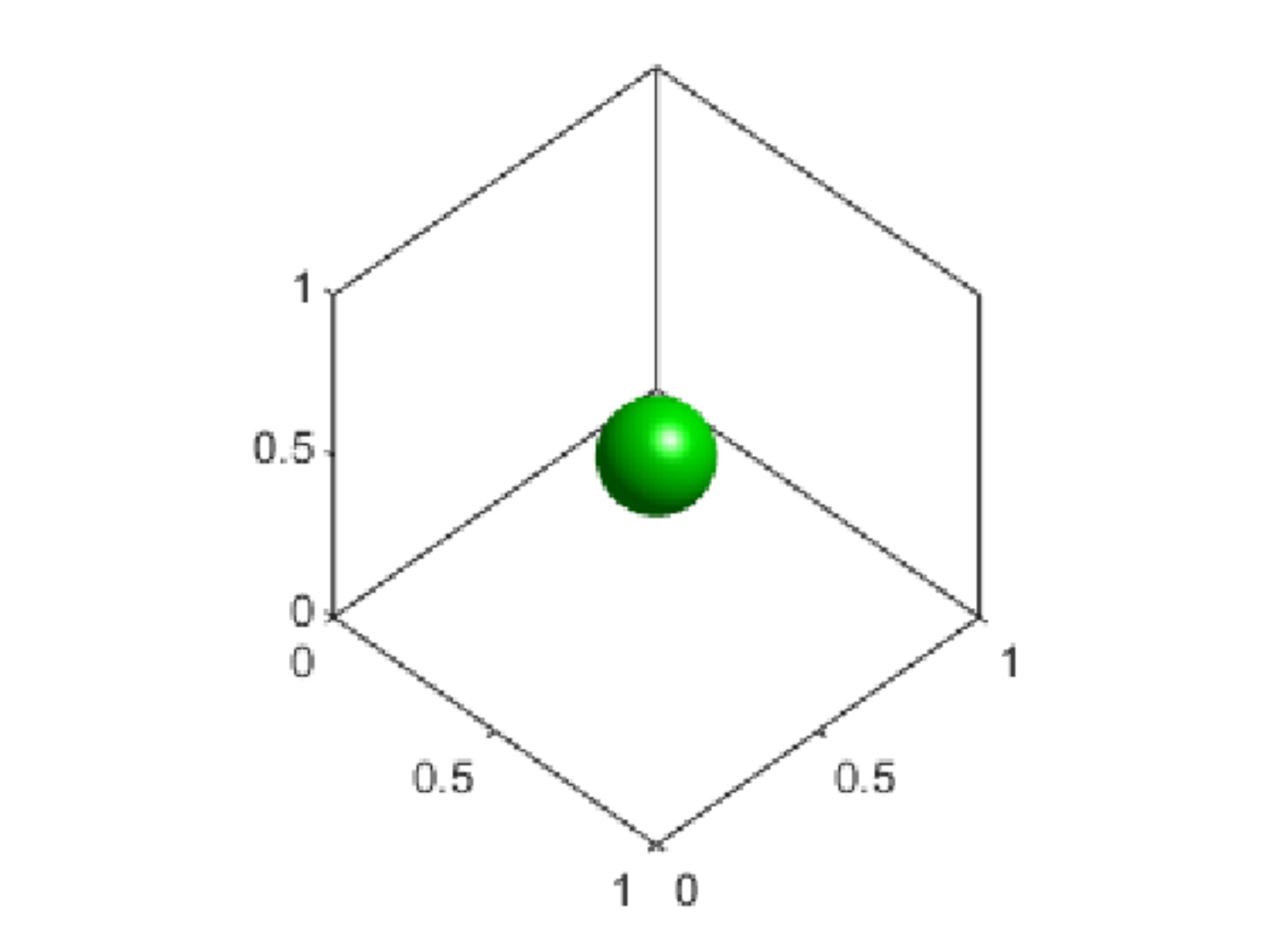}
\caption{Snapshots of zero level set to the numerical solutions of Allen-Cahn model with the initial condition \eqref{AC-initial-e2} at t=0, 2, 3, 3.5.}\label{fig:fig3-3}
\end{figure}
\subsection{Cahn-Hilliard model}
Consider the following well-known Cahn-Hilliard model:
\begin{equation}\label{section5_CH_model}
\displaystyle\frac{\partial \phi}{\partial t}=M\Delta\left(-\epsilon^2\Delta\phi+\phi^3-\phi\right),\quad(\textbf{x},t)\in\Omega\times [0,T],
\end{equation}
with the free energy
\begin{equation}\label{section5_CH-e1}
E(\phi)=\int_\Omega\frac{\epsilon^2}{2}|\nabla\phi|^2+\frac14(\phi^2-1)^2d\textbf{x},
\end{equation}
To give a more efficient simulation, we specify  $F(\phi)=\frac{1}{4}(\phi^2-1-\beta)^2$. Then the above Cahn-Hilliard equation is obtained as
\begin{equation}\label{section5_CH_e2}
\displaystyle\frac{\partial \phi}{\partial t}=M\Delta\left(-\epsilon^2\Delta\phi+\beta\phi+F'(\phi)\right),\quad F'(\phi)=\phi(\phi^2-1-\beta),\quad(\textbf{x},t)\in\Omega\times [0,T].
\end{equation}

Next, we investigated the numerical tests by the proposed RZF method to the Cahn-Hilliard model with the following initial conditions:
\begin{eqnarray}
&&\phi(x,y,0)=0.01\cos(2\pi x)\cos(2\pi y),\label{CH-initial-e1}\\
&&\phi(x,y,z,0)=1-\sum\limits_{i=1}^2\tanh\left(\frac{\sqrt{(x-x_i)^2+(y-y_i)^2+(z-z_i)^2}-r}{\sqrt{2}\epsilon}\right),\label{CH-initial-e2}\\
&&\phi(x,y,z,0)=0.05\text{rand}(x,y,z),\label{CH-initial-e3}
\end{eqnarray}
where "rand" implies a random number generating function ranged from -1 to 1.

We set the initial condition as in \eqref{CH-initial-e1} to check the convergence rates of our proposed schemes. we adopt uniform meshes $N_x=N_y=128$, and set $T=1$, $\epsilon=0.4$ $M=0.5$, $\beta=2$ and $\mathcal{P}(\eta)=\eta$. Figure \ref{fig:fig4} shows the results of the errors and convergence rates for the RZF-CN and RZF-BDF2 schemes. Numerical results demonstrate the accuracy and efficiency of our proposed scheme.
\begin{figure}[htp]
\centering
\subfigure[Convergence test of the variable $\phi$]{
\includegraphics[width=8cm,height=8cm]{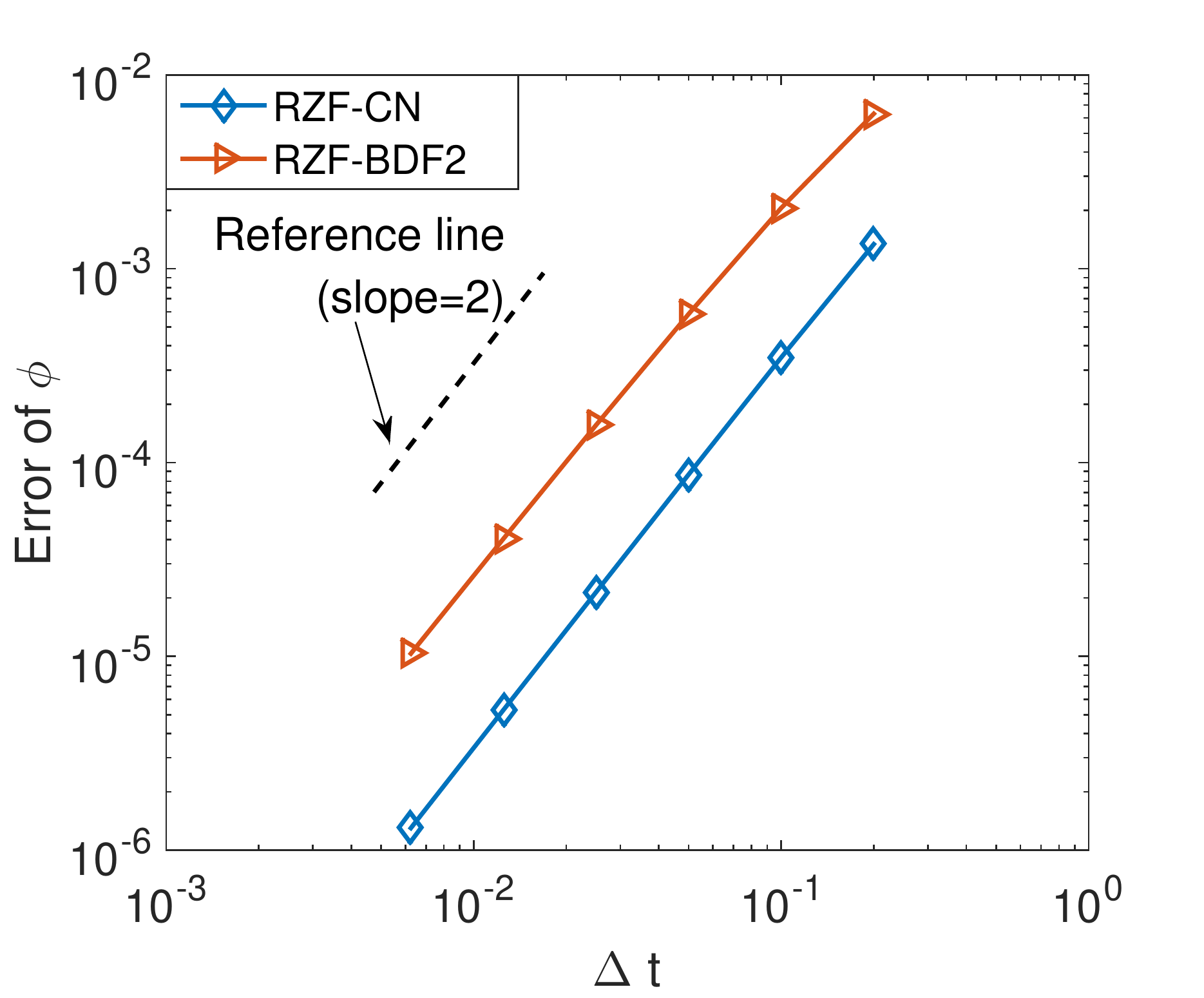}}
\subfigure[Convergence test of the variable $R$]{
\includegraphics[width=8cm,height=8cm]{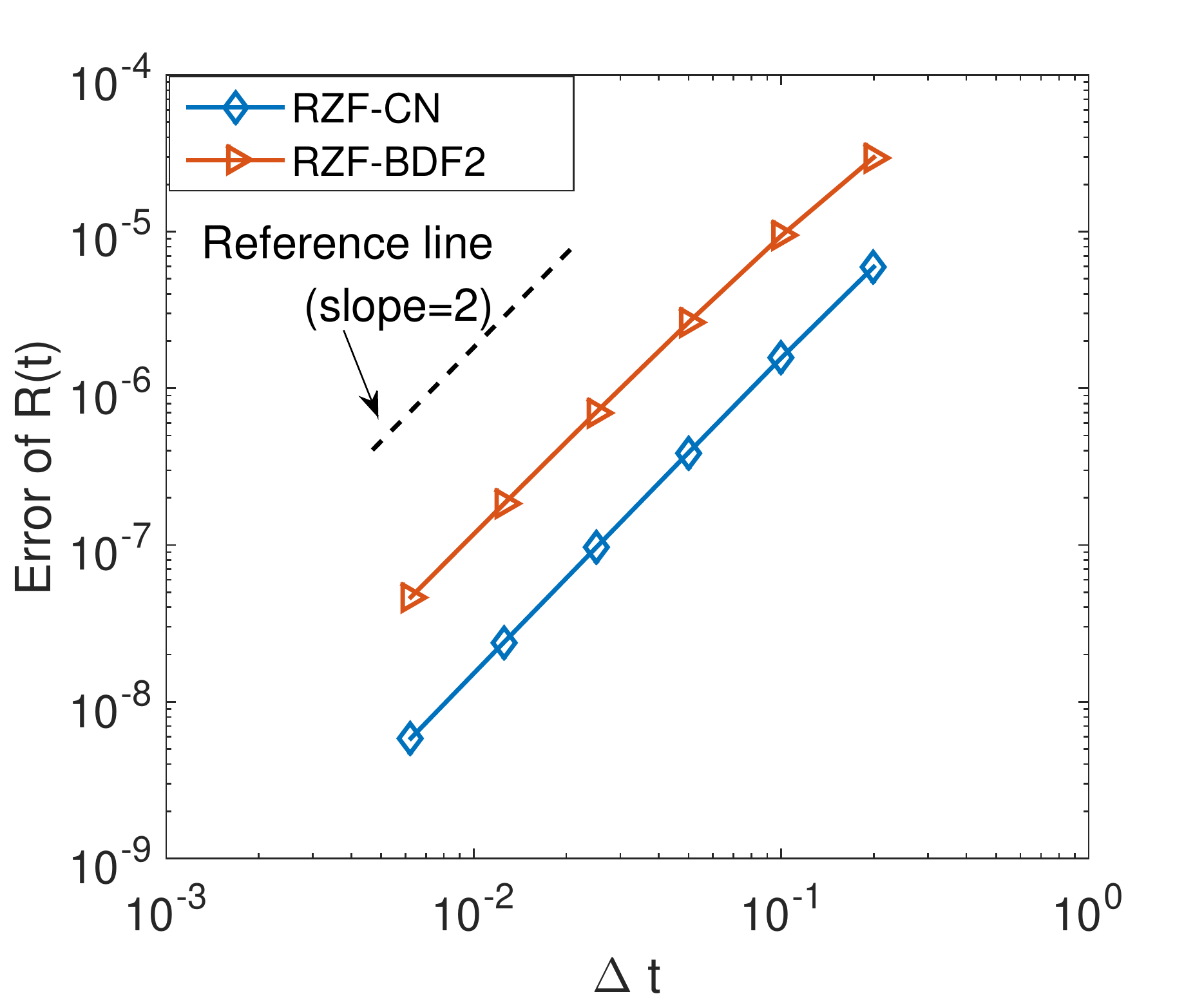}}
\caption{Convergence test for Cahn-Hilliard equation using RZF-CN and RZF-BDF2 schemes.}\label{fig:fig4}
\end{figure}

Next, we investigate the coarsening dynamics driven by the Cahn-Hilliard equation with the initial conditions \eqref{CH-initial-e2} and \eqref{CH-initial-e3}. We set $\Omega=[0,1]^3$, $\mathcal{P}(\eta)=\eta_t$, $\epsilon=0.01$, $M=1$, $r=0.14$, $(x_1,y_1,z_1)=(0.5,0.4,0.5)$, $(x_2,y_2,z_2)=(0.5,0.7,0.5)$. Here, we use $N_x=N_y=N_z=128$, $h=1/128$, and $\Delta t=1e-3$.  Figure \ref{fig:fig5} shows the numerical investigation results at $t=0$, $0.01$, $0.1$ and $0.2$ to the 3D Cahn-Hilliard model with the initial condition \eqref{CH-initial-e2}.  One can see that the initially separated spheres connect with each other gradually and finally merge into a big vesicle. The results are also consistent with those presented in \cite{cheng2020global}.  Figure \ref{fig:fig6} shows the numerical investigation results at $t=0$, $0.01$, $0.1$ and $0.2$ to the 3D Cahn-Hilliard model with the initial condition \eqref{CH-initial-e3}. The above results represent well the coarsening dynamics of the Cahn-Hilliard equation.
\begin{figure}[htp]
\centering
\includegraphics[width=4cm,height=3.5cm]{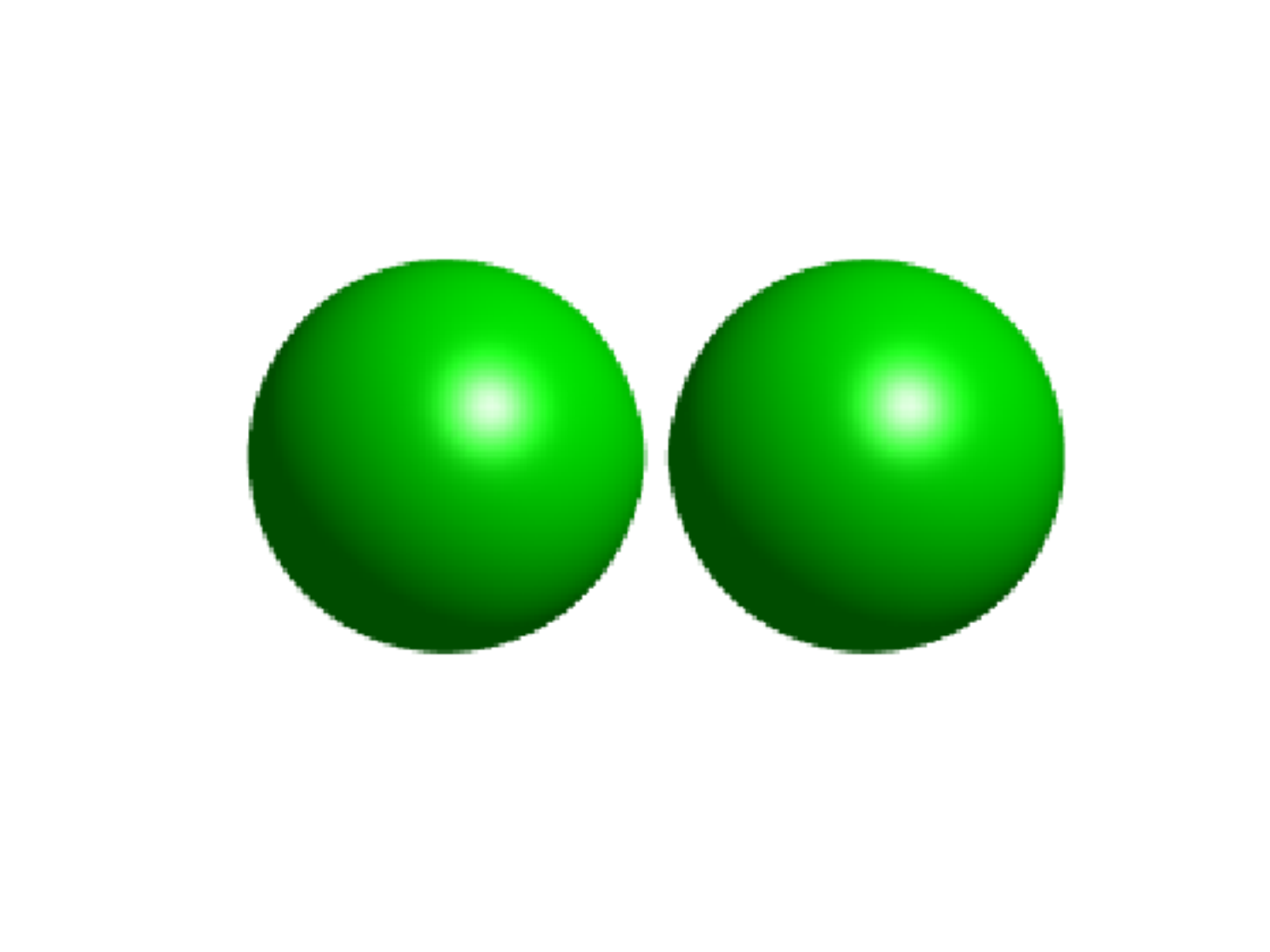}
\includegraphics[width=4cm,height=3.5cm]{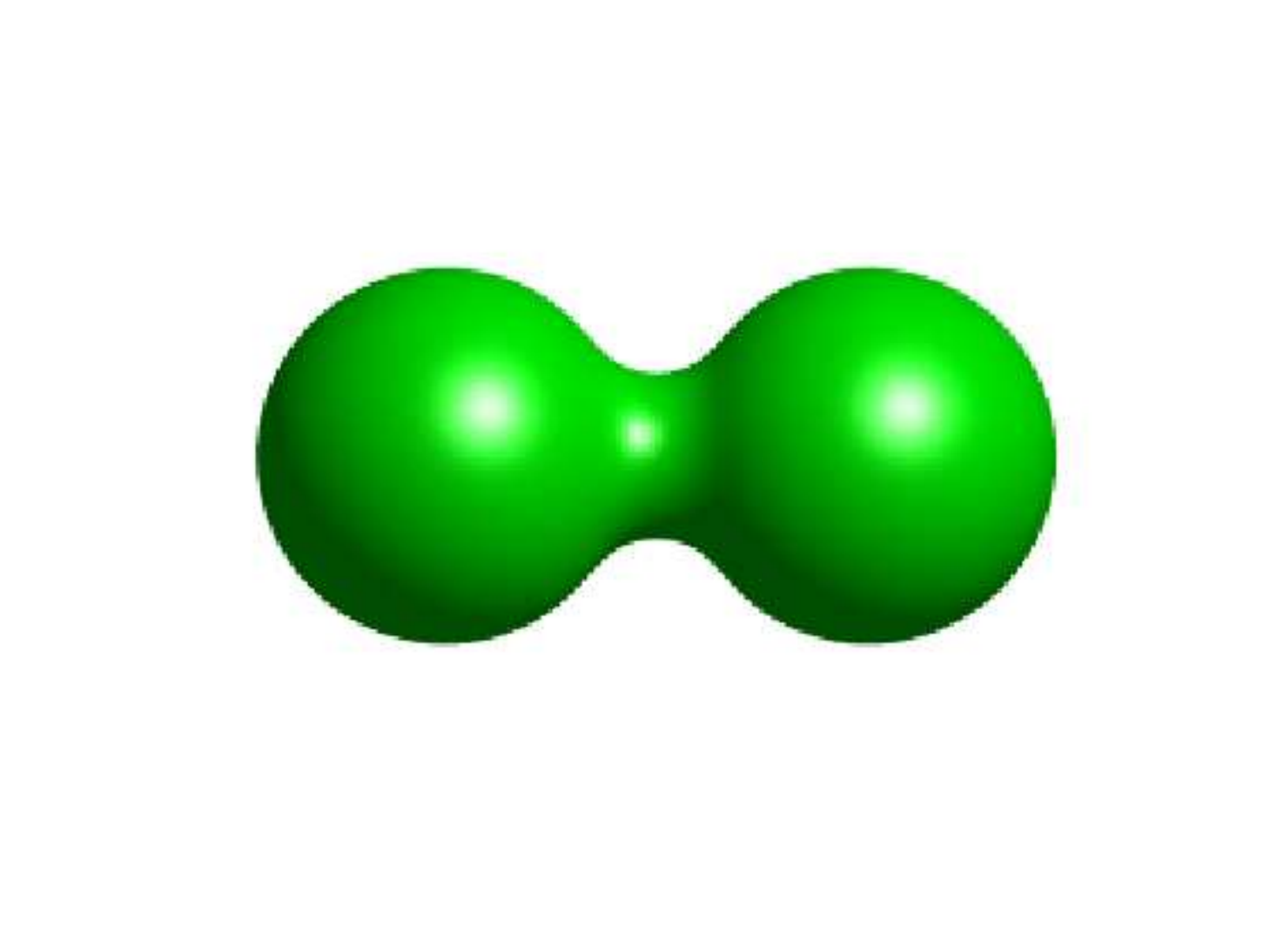}
\includegraphics[width=4cm,height=3.5cm]{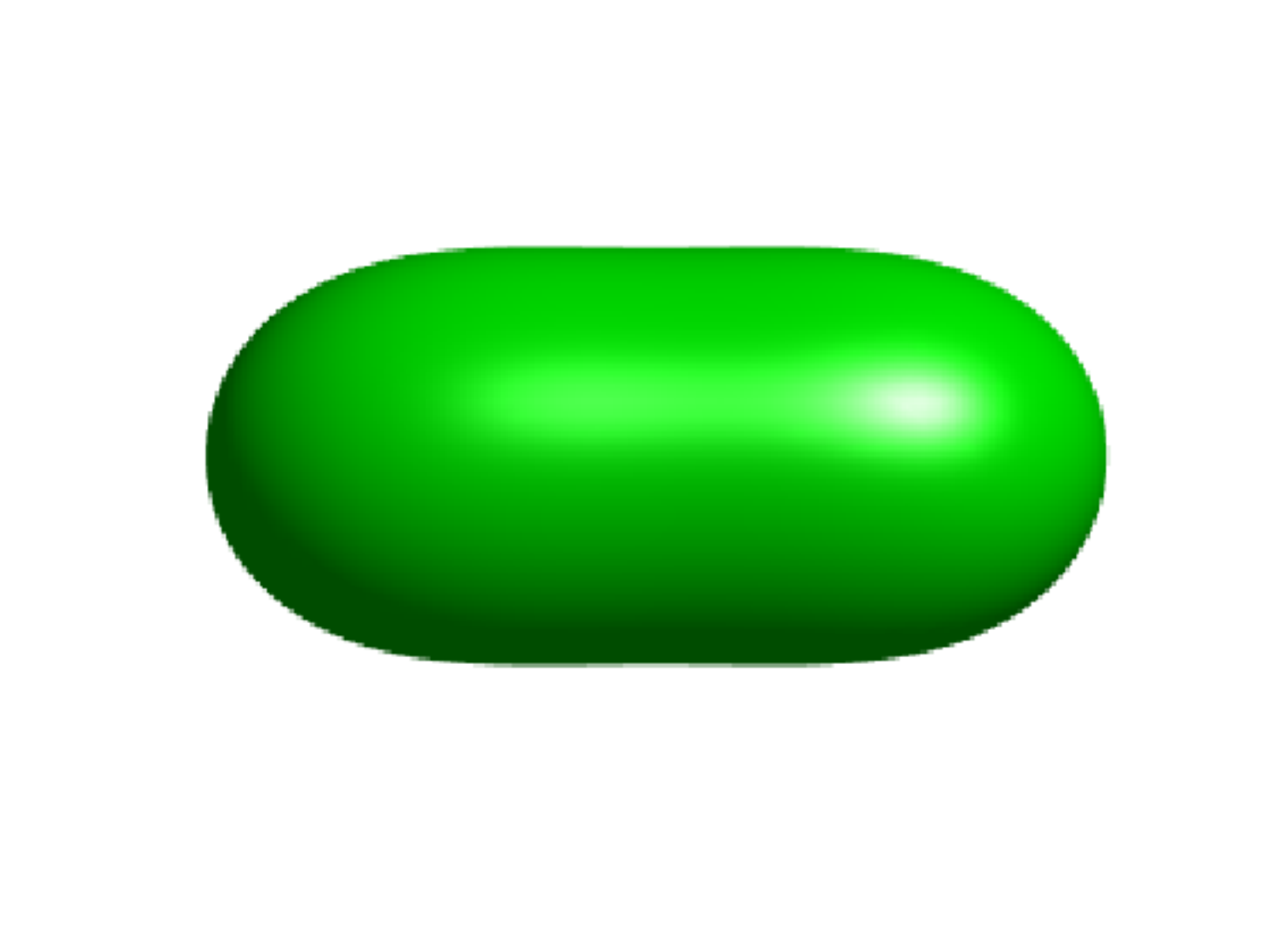}
\includegraphics[width=4cm,height=3.5cm]{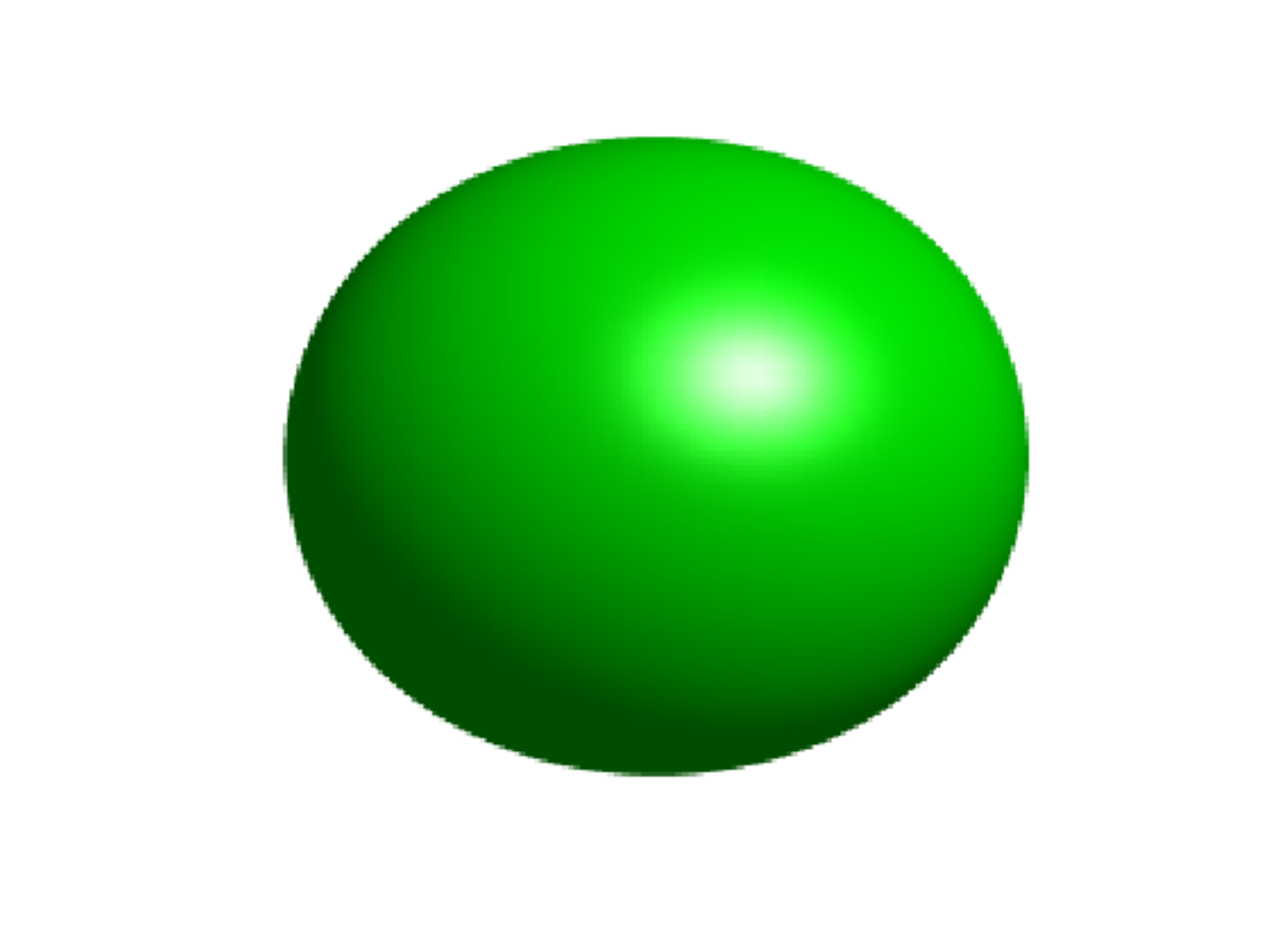}
\caption{Snapshots of zero level set to the numerical solutions of Cahn-Hilliard model with the initial condition \eqref{CH-initial-e2} using RZF-CN scheme at t=0, 0.01, 0.1, 0.2.}\label{fig:fig5}
\end{figure}
\begin{figure}[htp]
\centering
\includegraphics[width=8cm,height=5cm]{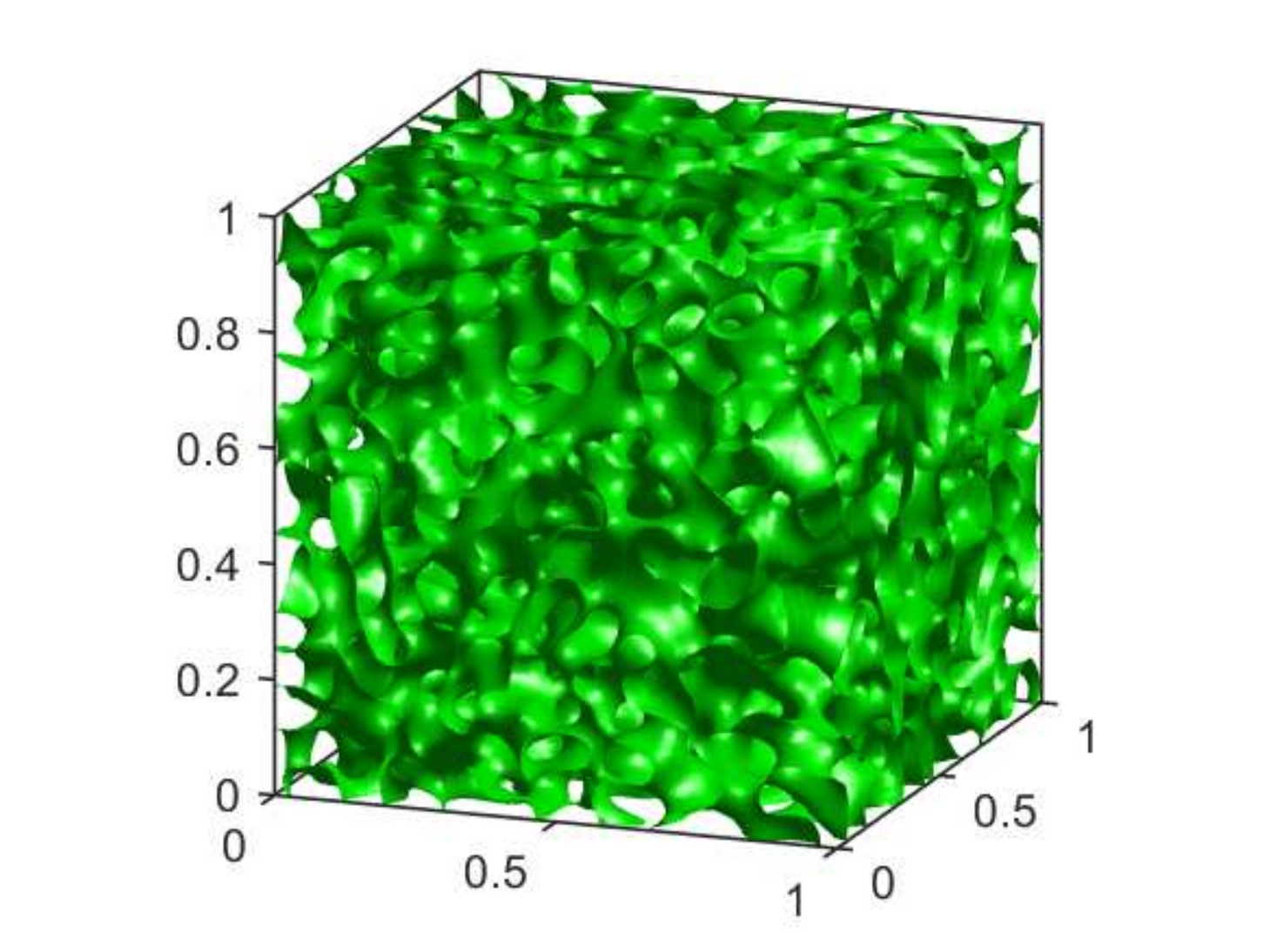}
\includegraphics[width=8cm,height=5cm]{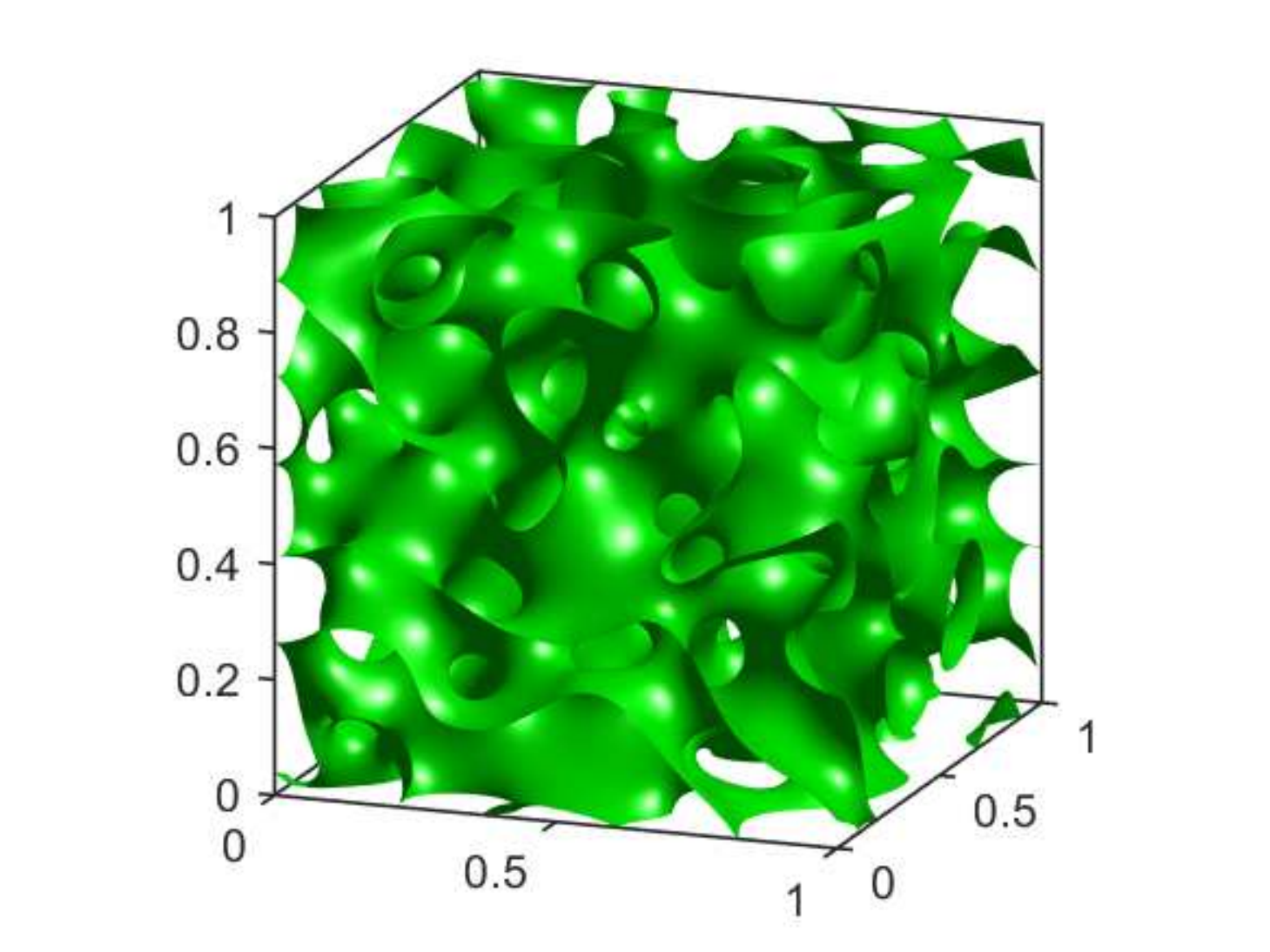}
\includegraphics[width=8cm,height=5cm]{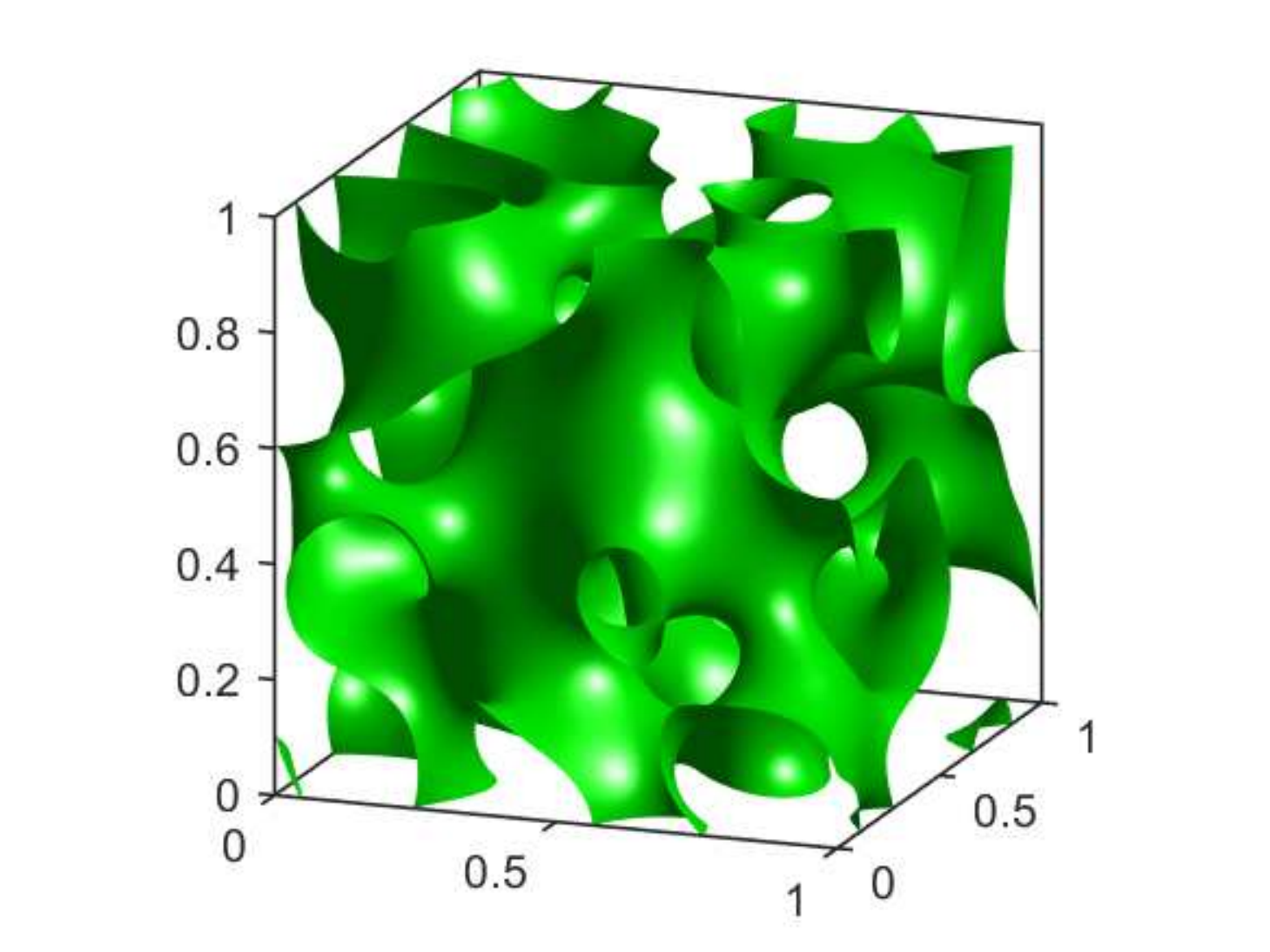}
\includegraphics[width=8cm,height=5cm]{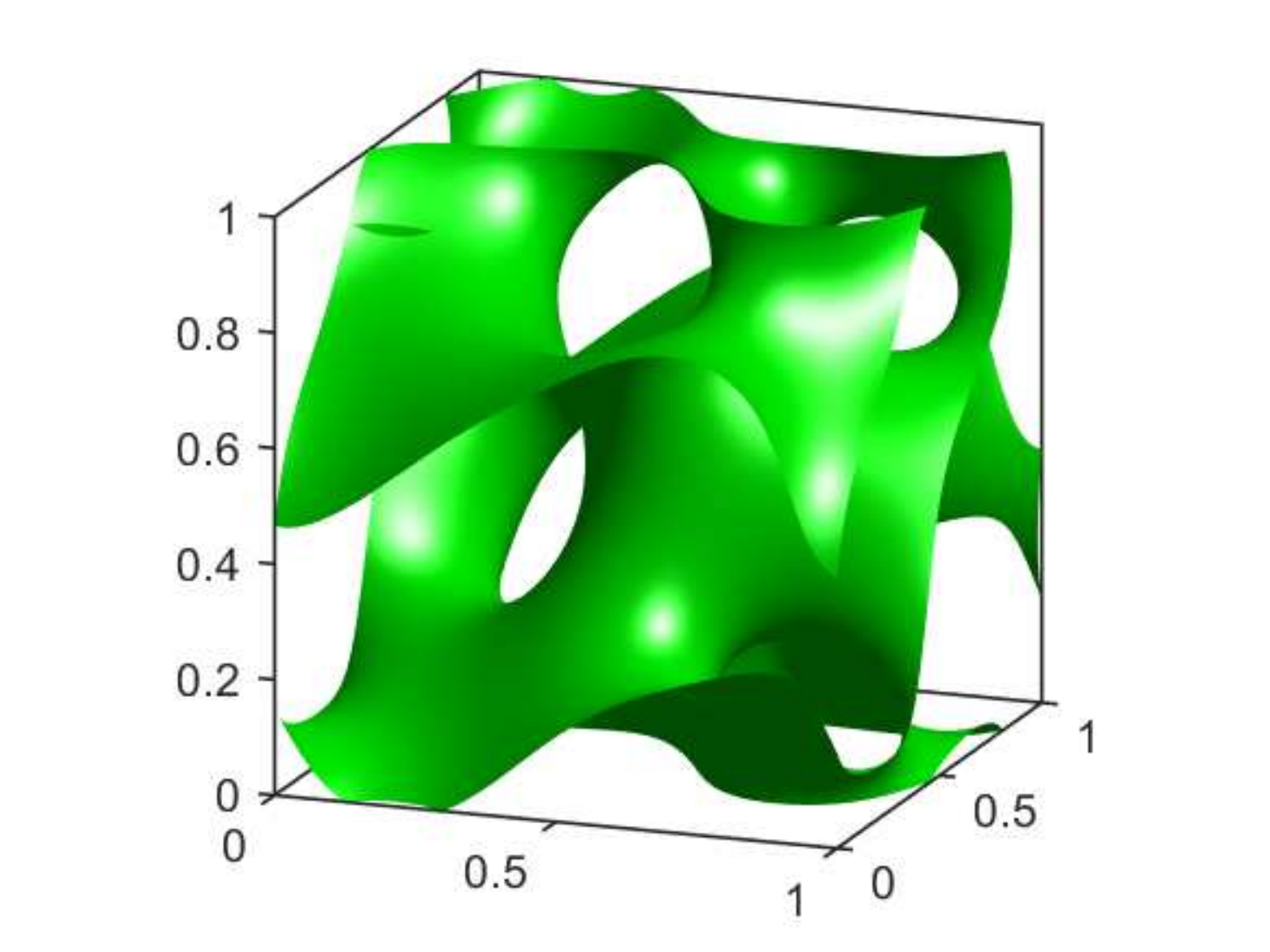}
\caption{Snapshots of zero level set to the numerical solutions of Cahn-Hilliard model with the initial condition \eqref{CH-initial-e3} using RZF-CN scheme at t=0.02, 0.1, 0.5, 2.}\label{fig:fig6}
\end{figure}
\subsection{Phase filed crystal model}
Consider the following Swift-Hohenberg free energy:
\begin{equation*}
E(\phi)=\int_{\Omega}\left(\frac{1}{4}\phi^4+\frac{1}{2}\phi\left(-\epsilon+(1+\Delta)^2\right)\phi\right)d\textbf{x},
\end{equation*}
where $\textbf{x} \in \Omega \subseteq \mathbb{R}^d$, $\phi$ is the density field, $g\geq0$ and $\epsilon>0$ are constants with physical significance, $\Delta$ is the Laplacian operator.

Considering a gradient flow in $H^{-1}$, one can obtain the following phase field crystal (PFC) model:
\begin{equation*}
\frac{\partial \phi}{\partial t}=\Delta\mu=\Delta\left(\phi^3-\epsilon\phi+(1+\Delta)^2\phi\right), \quad(\textbf{x},t)\in\Omega\times Q,
\end{equation*}
which is a sixth-order nonlinear parabolic equation and can be applied to simulate various phenomena such as crystal growth, material hardness and phase transition. Here $Q=(0,T]$, $\mu=\frac{\delta E}{\delta \phi}$ is called the chemical potential.

In the following, we simulate the benchmark simulation for the PFC model. we choose the initial condition $\phi_0(x,y)=\widehat{\phi}_0+0.01\times \text{rand}(x,y)$, where the $\text{rand}(x,y)$ is the random number in $[-1,1]$ with zero mean. In this test, set $\epsilon=0.325$ and adopt uniform meshes $N_x=N_y=128$ in the Fourier spectral method.

We show the phase transition behavior of the density field for different values at various times in Figures \ref{fig:fig7} with different $\widehat{\phi}_0$ and $\Omega$. We observe that for different $\widehat{\phi}_0$, the shape and rate of crystallization of crystals are different. In all cases, the process of the phase transition is qualitative agreement of the density fields. Similar computation results for phase field crystal model can be found in many articles such as in \cite{yang2017linearly}. The energy curves are plotted for PFC model with different initial conditions in Figure \ref{fig:fig8}. It is observed that the computed energy for all cases decays with time.
\begin{figure}[htp]
\centering
\subfigure[$\phi$ at t=10,30,50,100 with $\Omega=\text{[}0,100{]}^2$ and $\widehat{\phi}_0=0.25$]
{
\includegraphics[width=4cm,height=4cm]{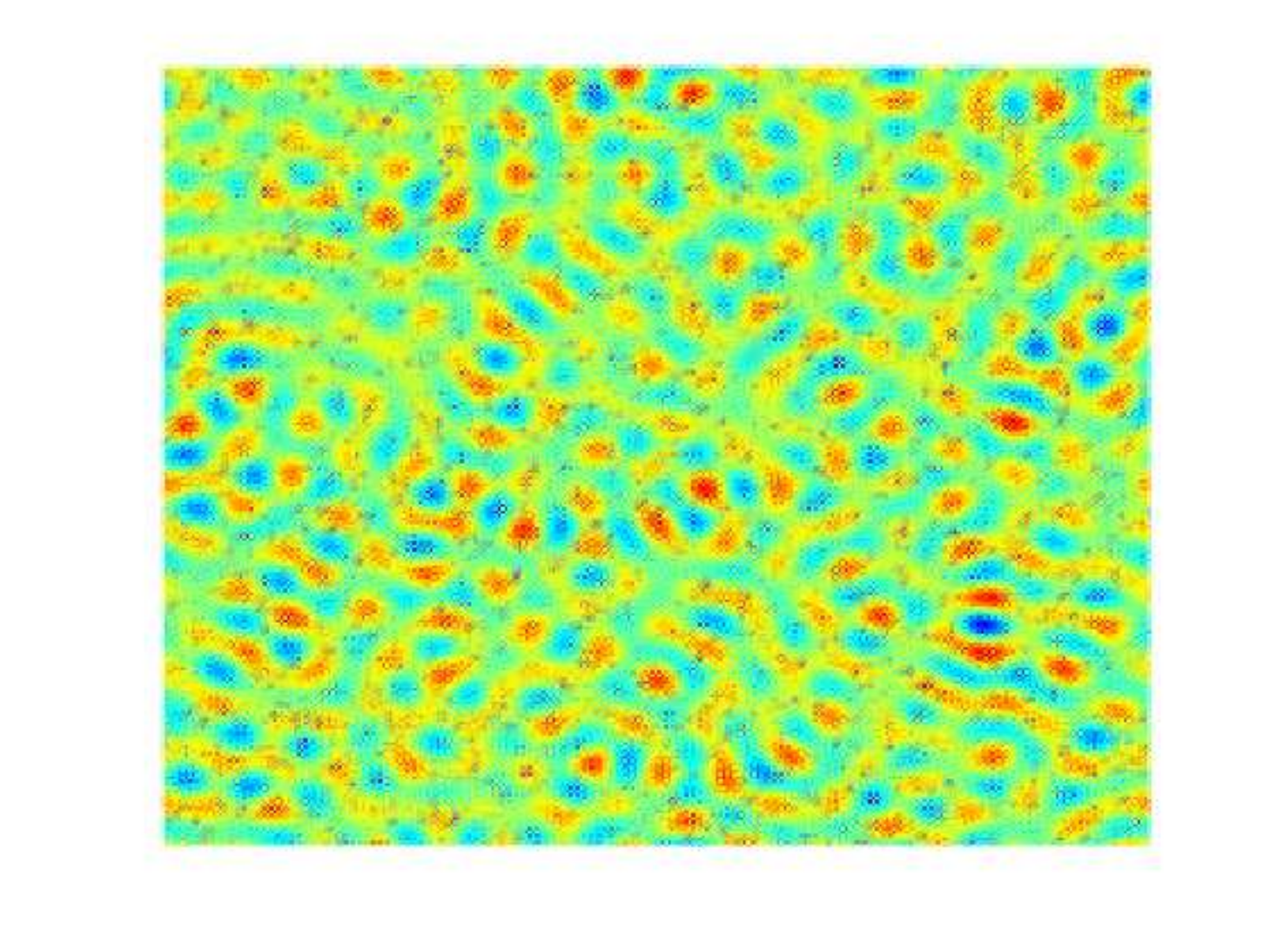}
\includegraphics[width=4cm,height=4cm]{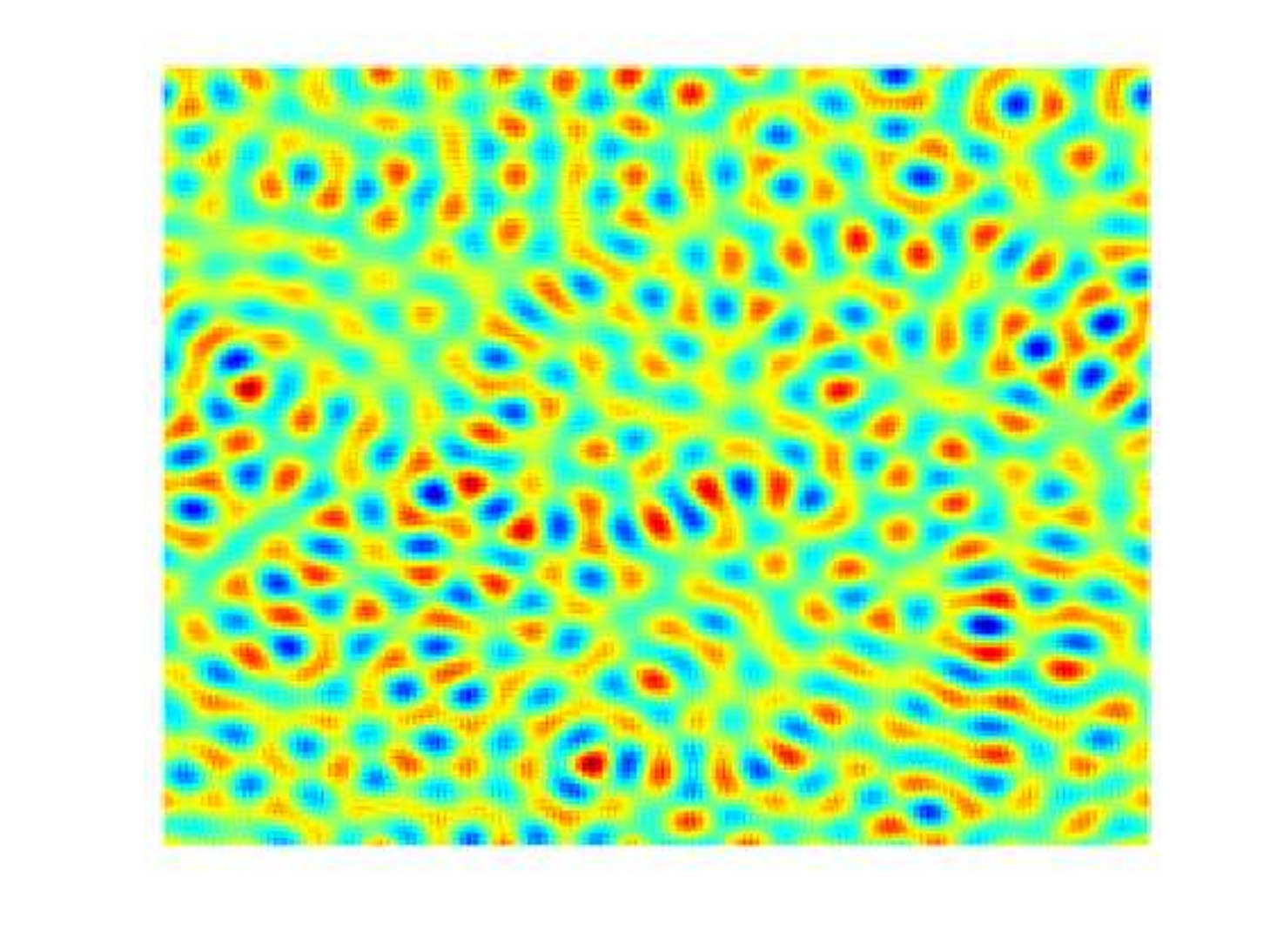}
\includegraphics[width=4cm,height=4cm]{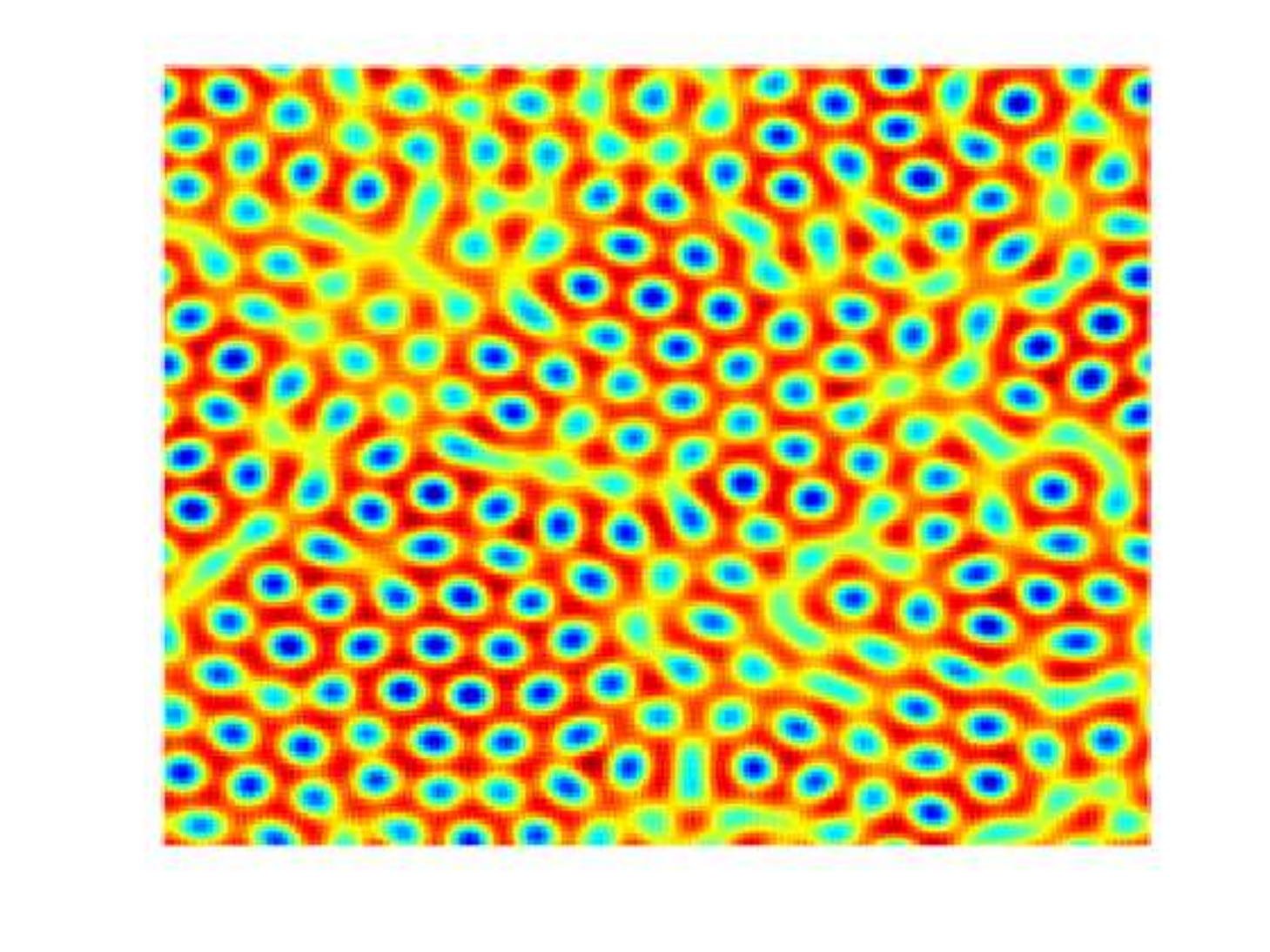}
\includegraphics[width=4cm,height=4cm]{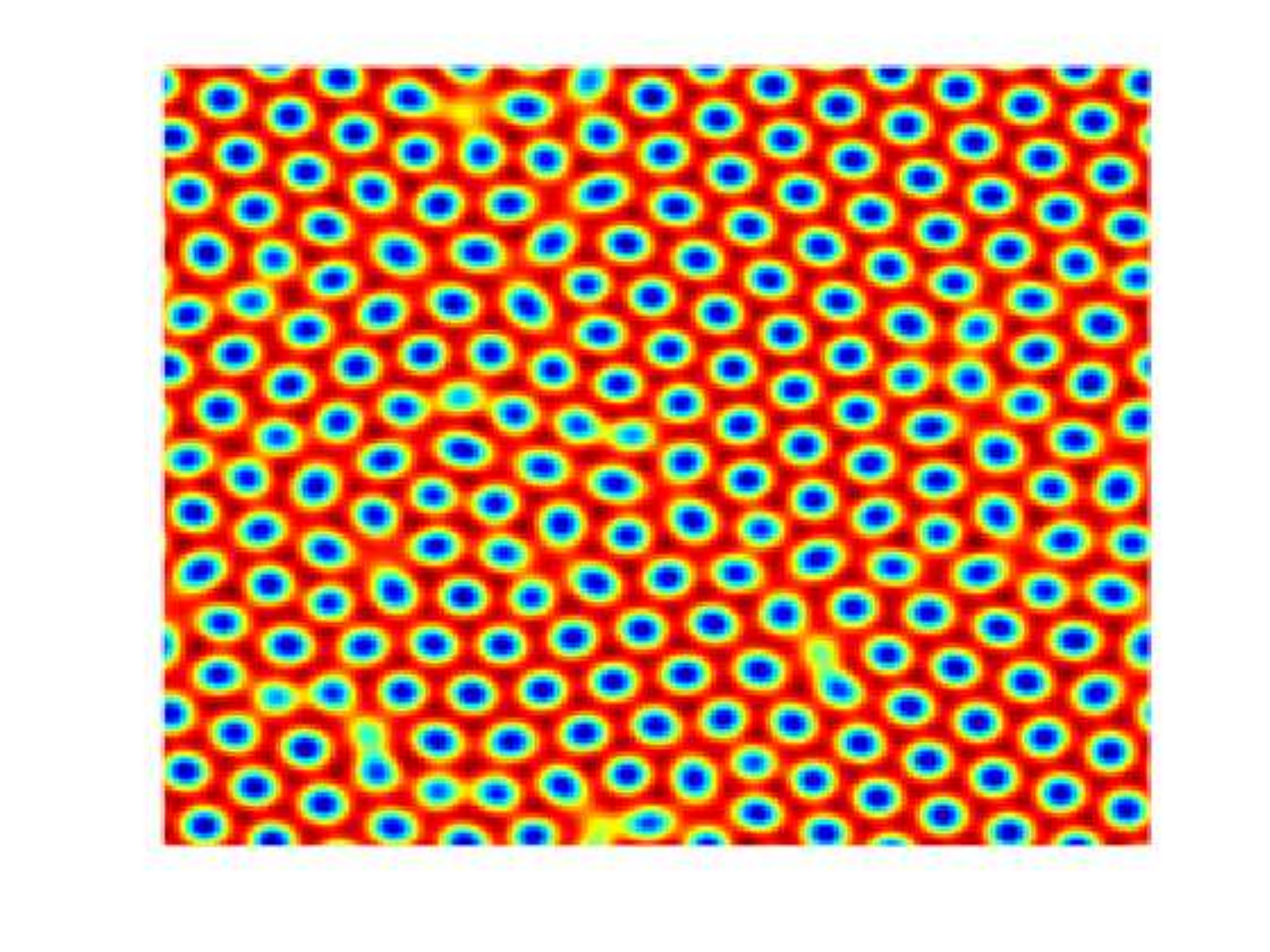}
}
\subfigure[$\phi$ at t=10,20,50,100 with $\Omega=\text{[}0,200{]}^2$ and $\widehat{\phi}_0=0$]
{
\includegraphics[width=4cm,height=4cm]{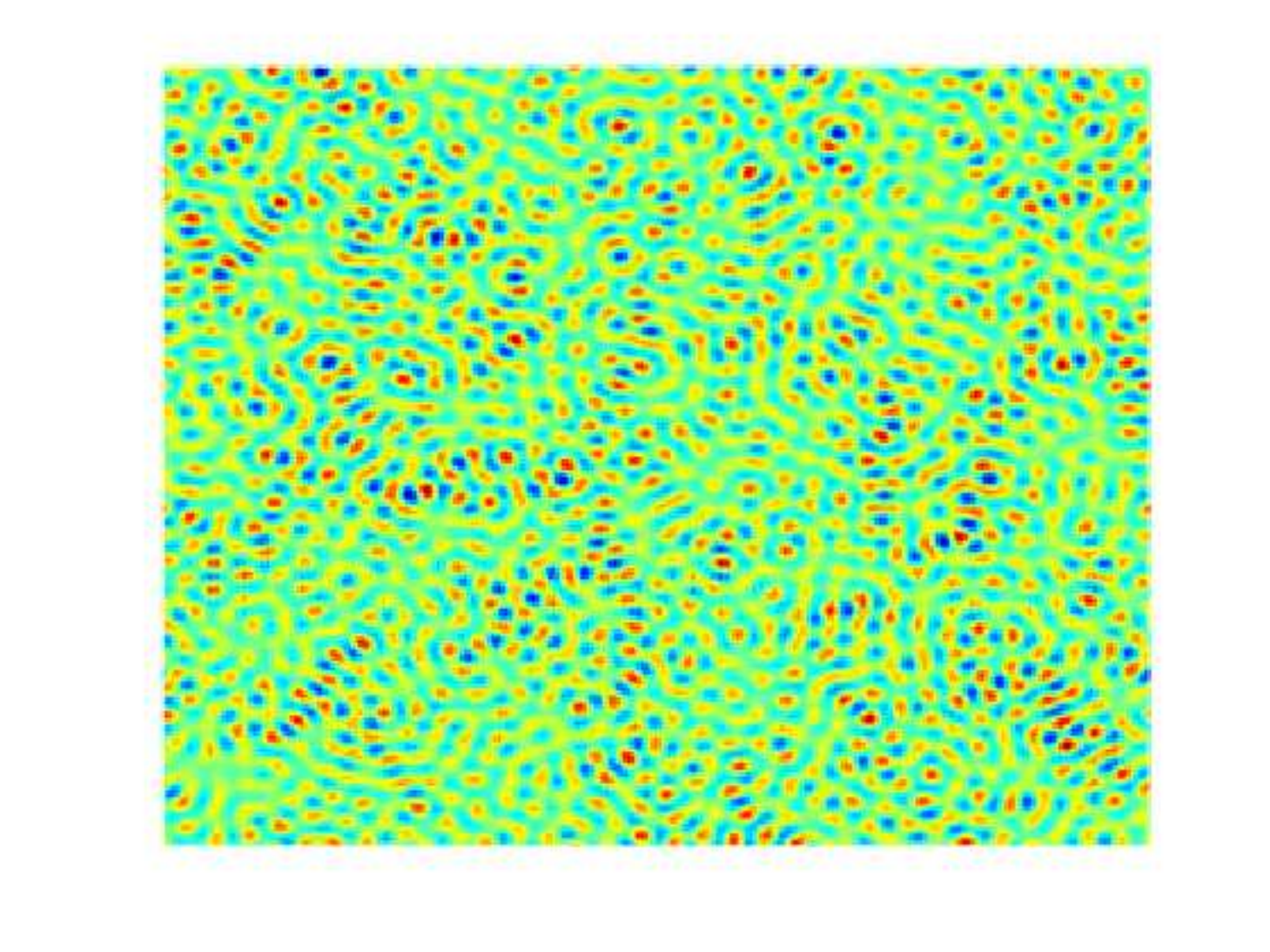}
\includegraphics[width=4cm,height=4cm]{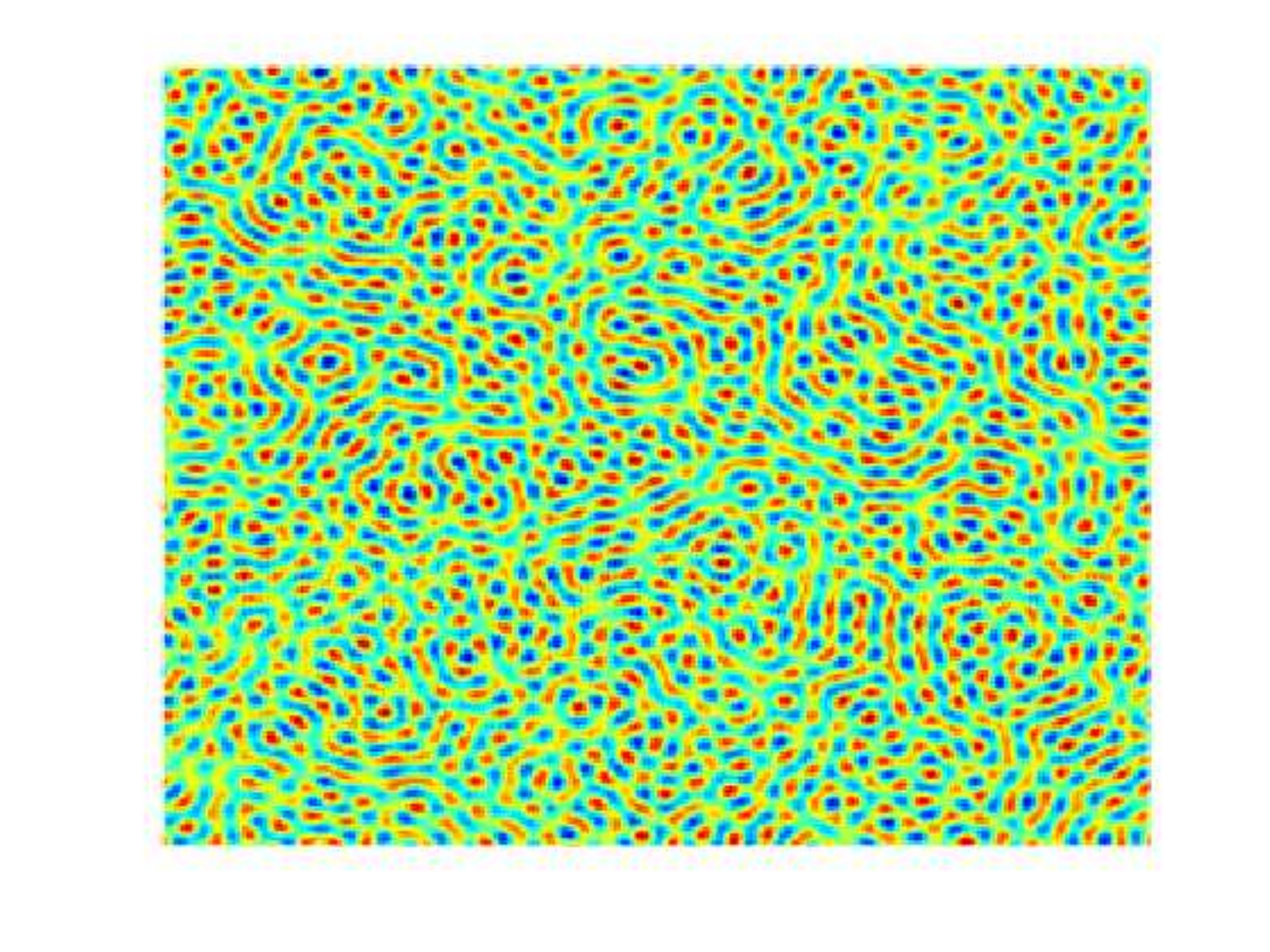}
\includegraphics[width=4cm,height=4cm]{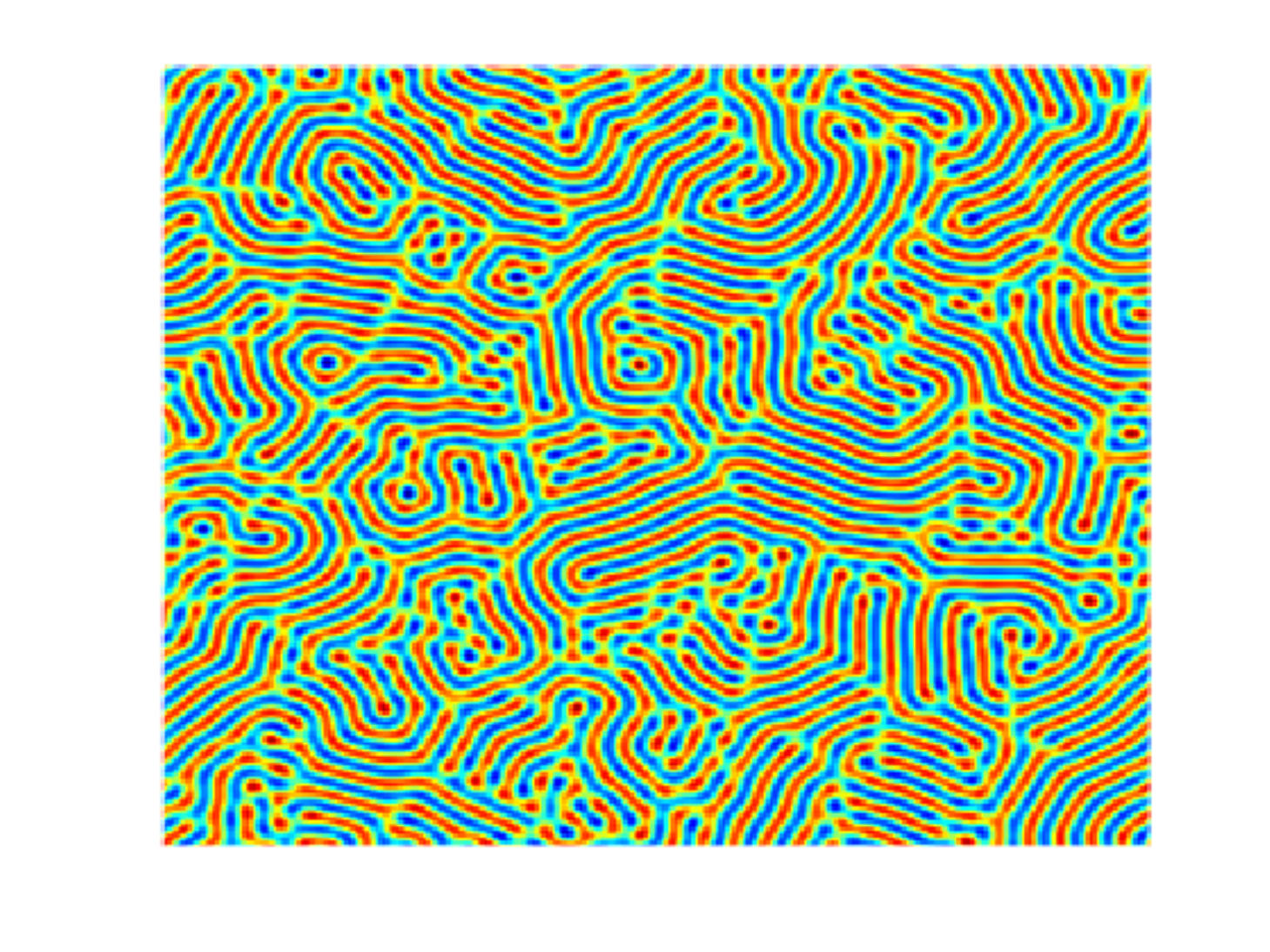}
\includegraphics[width=4cm,height=4cm]{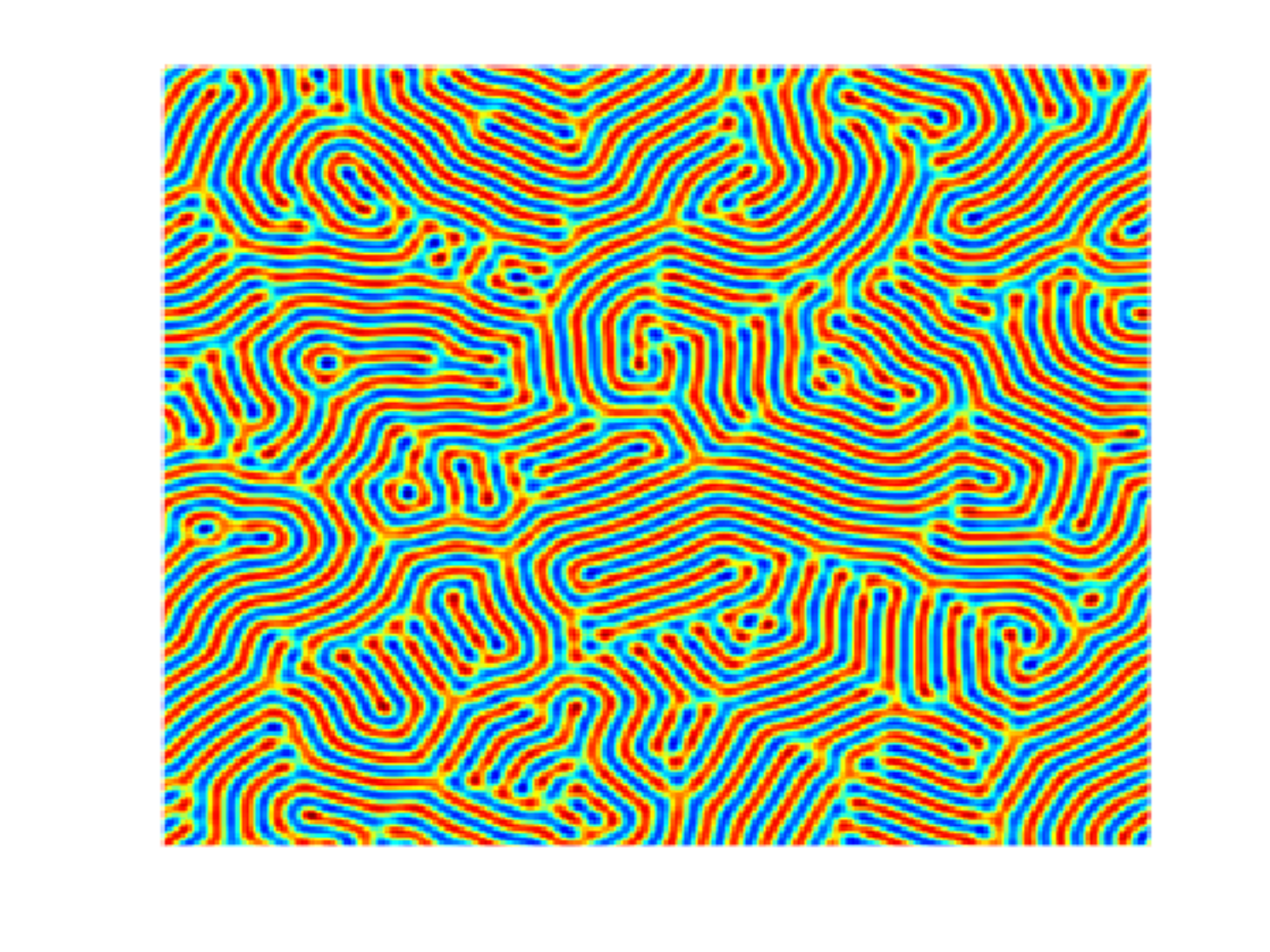}
}
\subfigure[$\phi$ at t=10,20,30,50 with $\Omega=\text{[}0,200{]}^2$ and $\widehat{\phi}_0=0.2$]
{
\includegraphics[width=4cm,height=4cm]{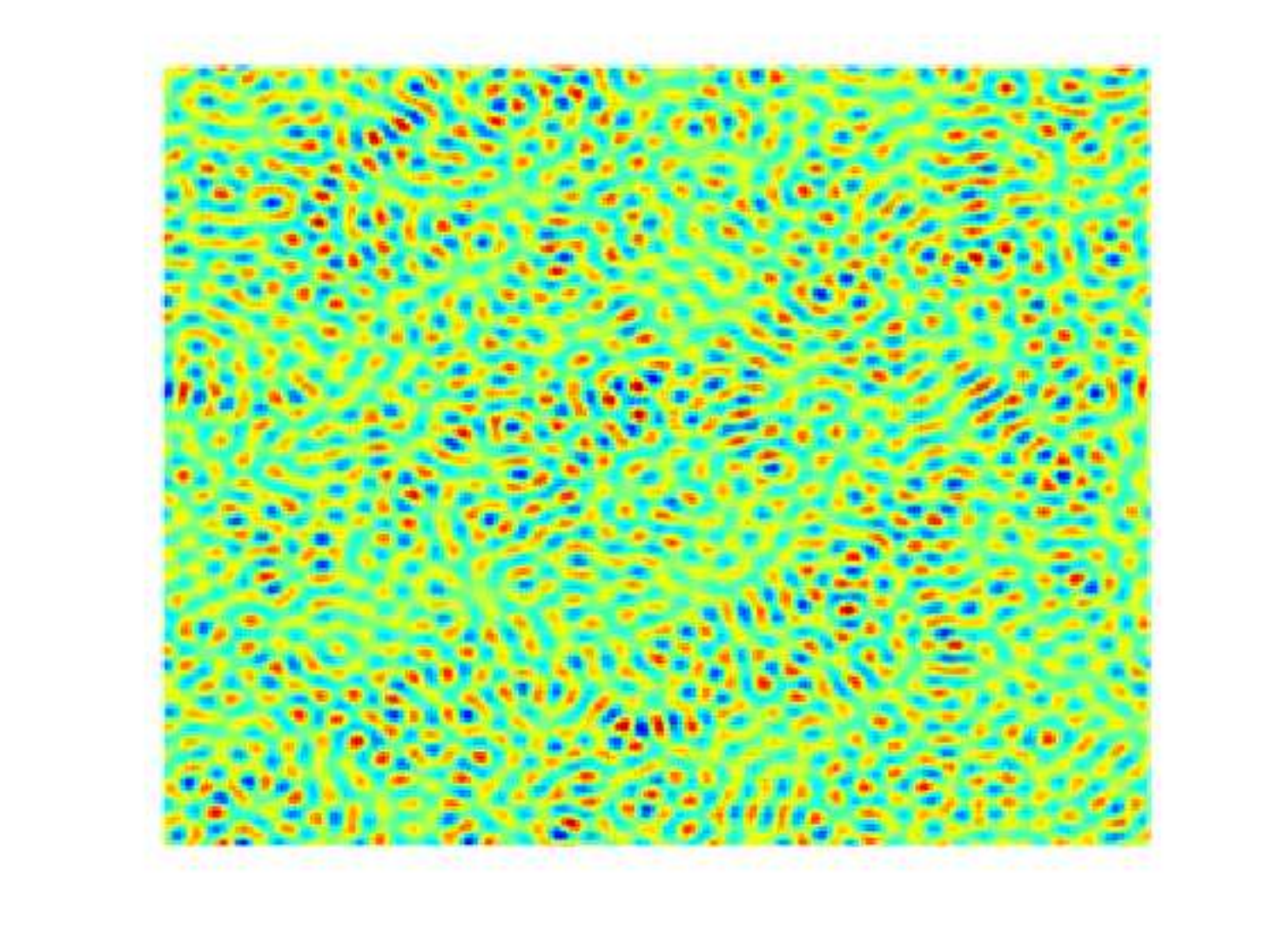}
\includegraphics[width=4cm,height=4cm]{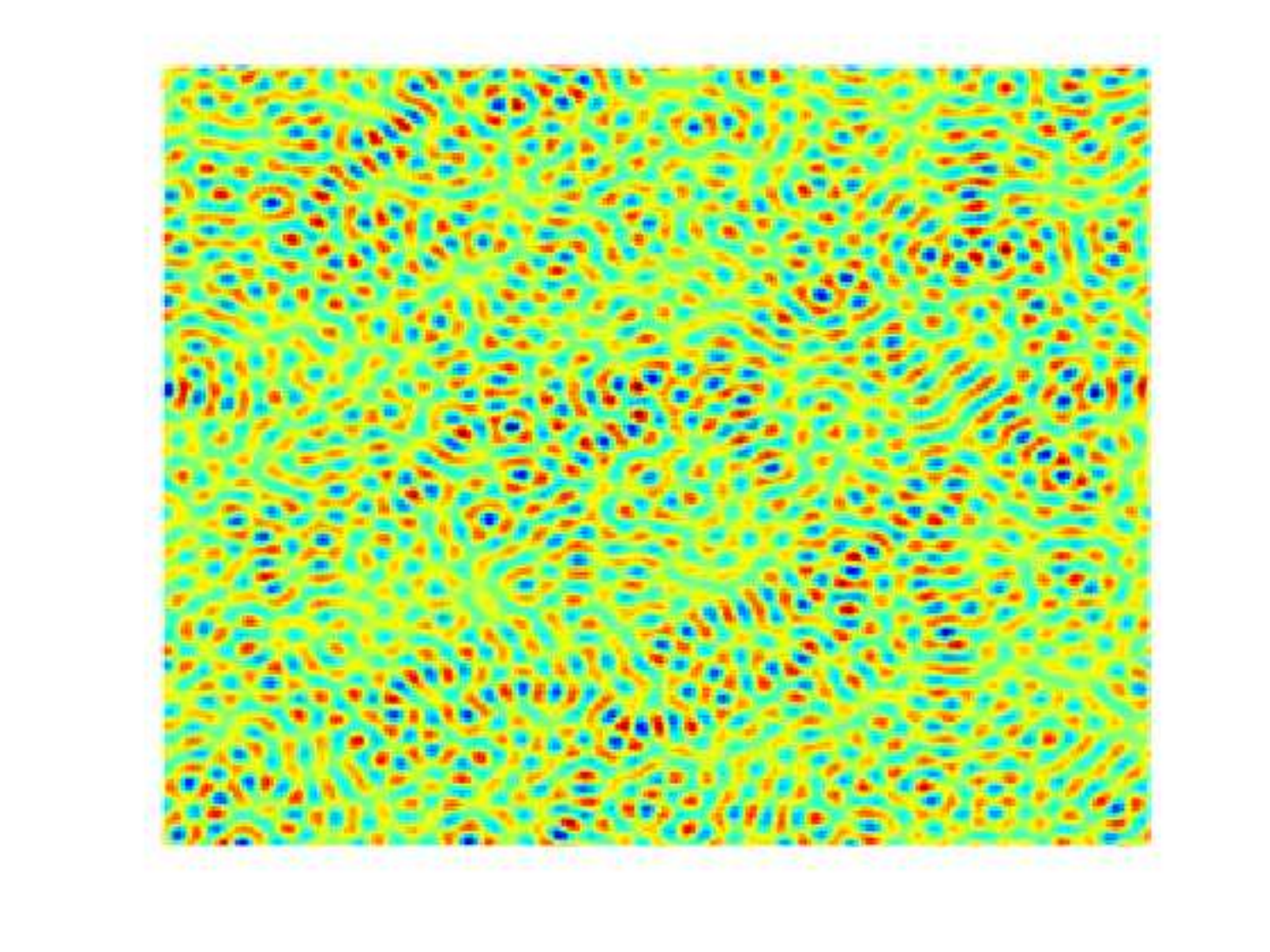}
\includegraphics[width=4cm,height=4cm]{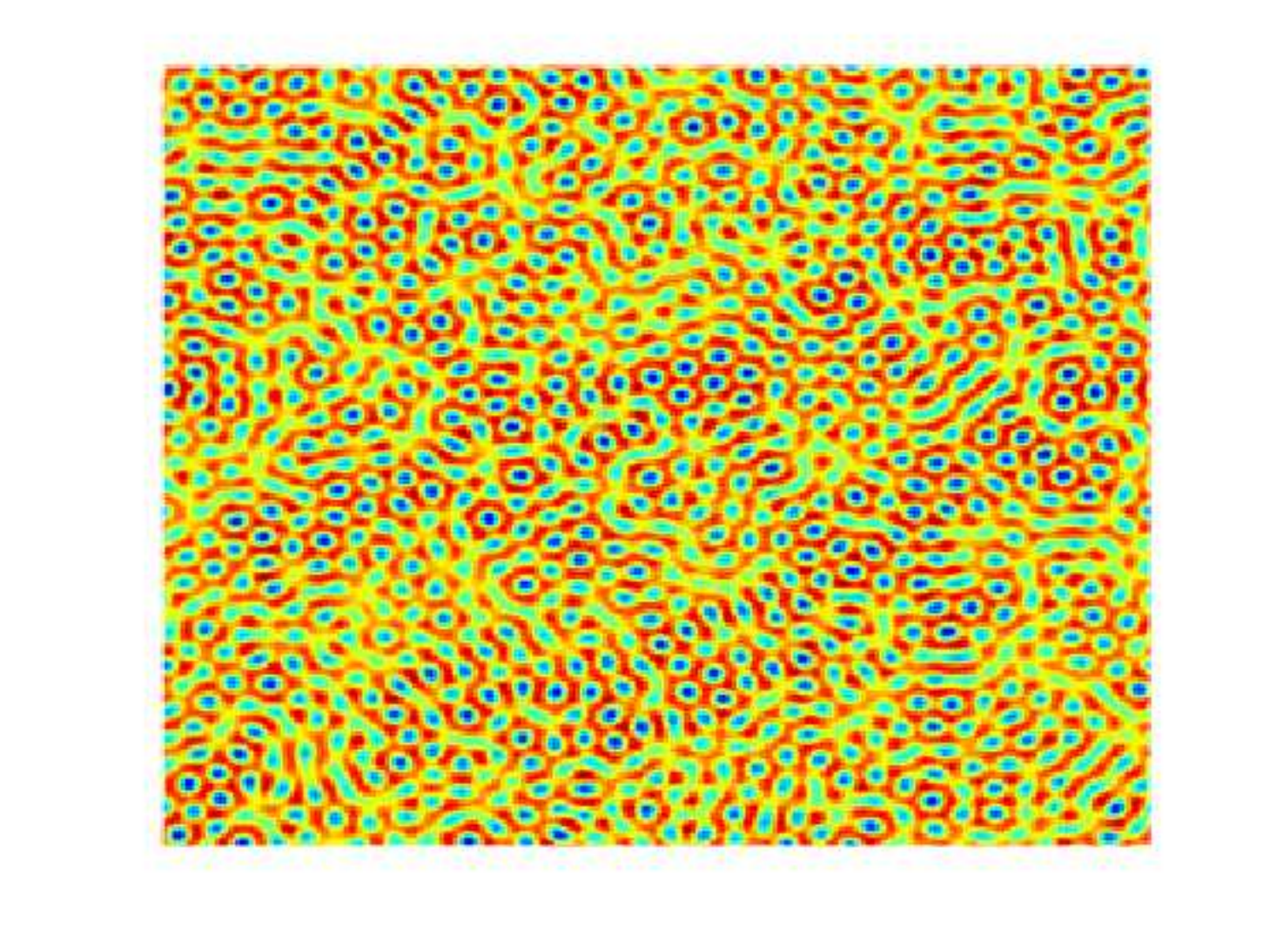}
\includegraphics[width=4cm,height=4cm]{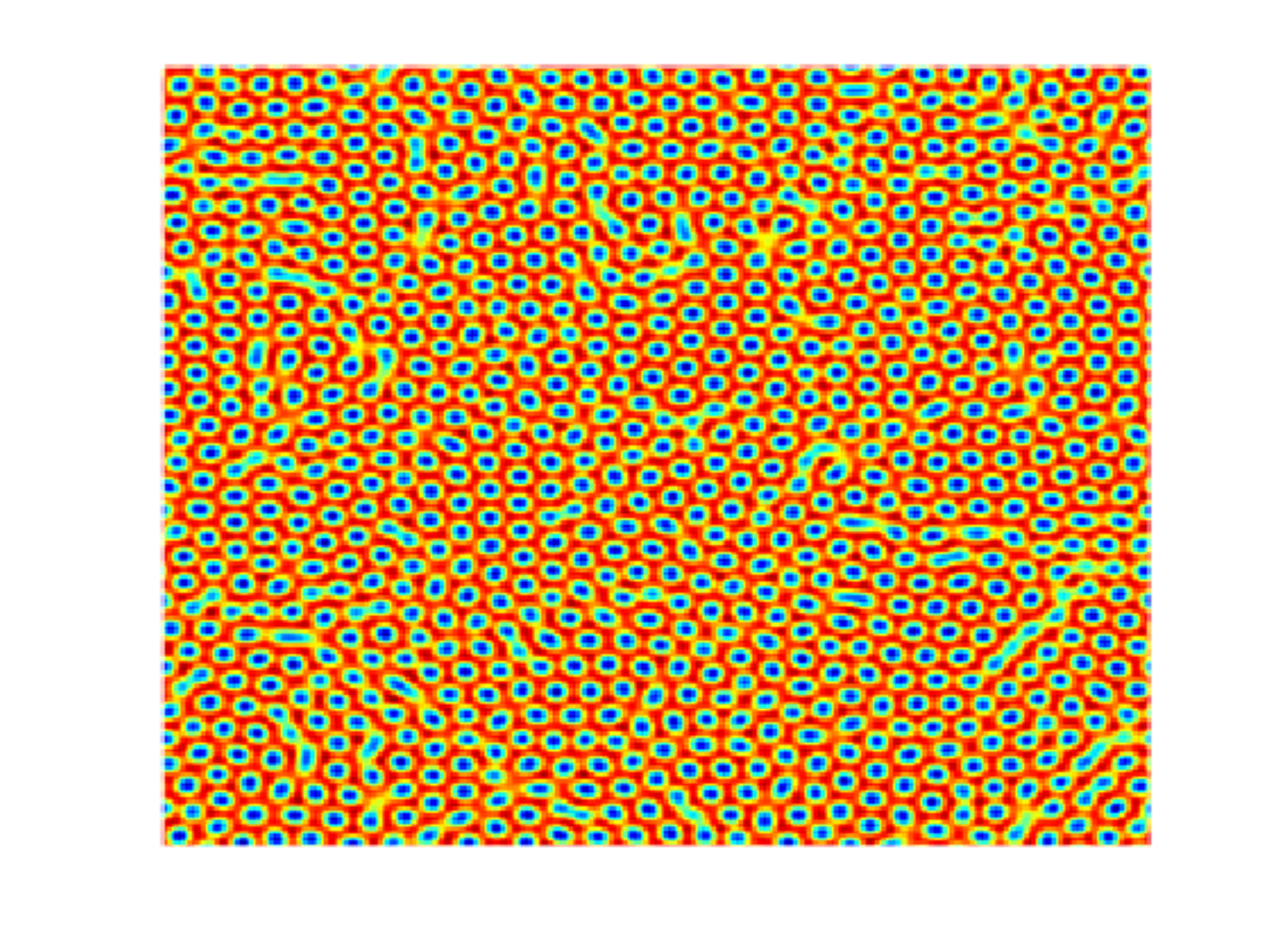}
}
\caption{Crystal growth pattern formation with different initial conditions governed by the PFC model.}\label{fig:fig7}
\end{figure}
\begin{figure}[htp]
\centering
\subfigure[Energy evolution with $\Omega=\text{[}0,100{]}^2$ and $\widehat{\phi}_0=0.1$]{
\includegraphics[width=5cm,height=5cm]{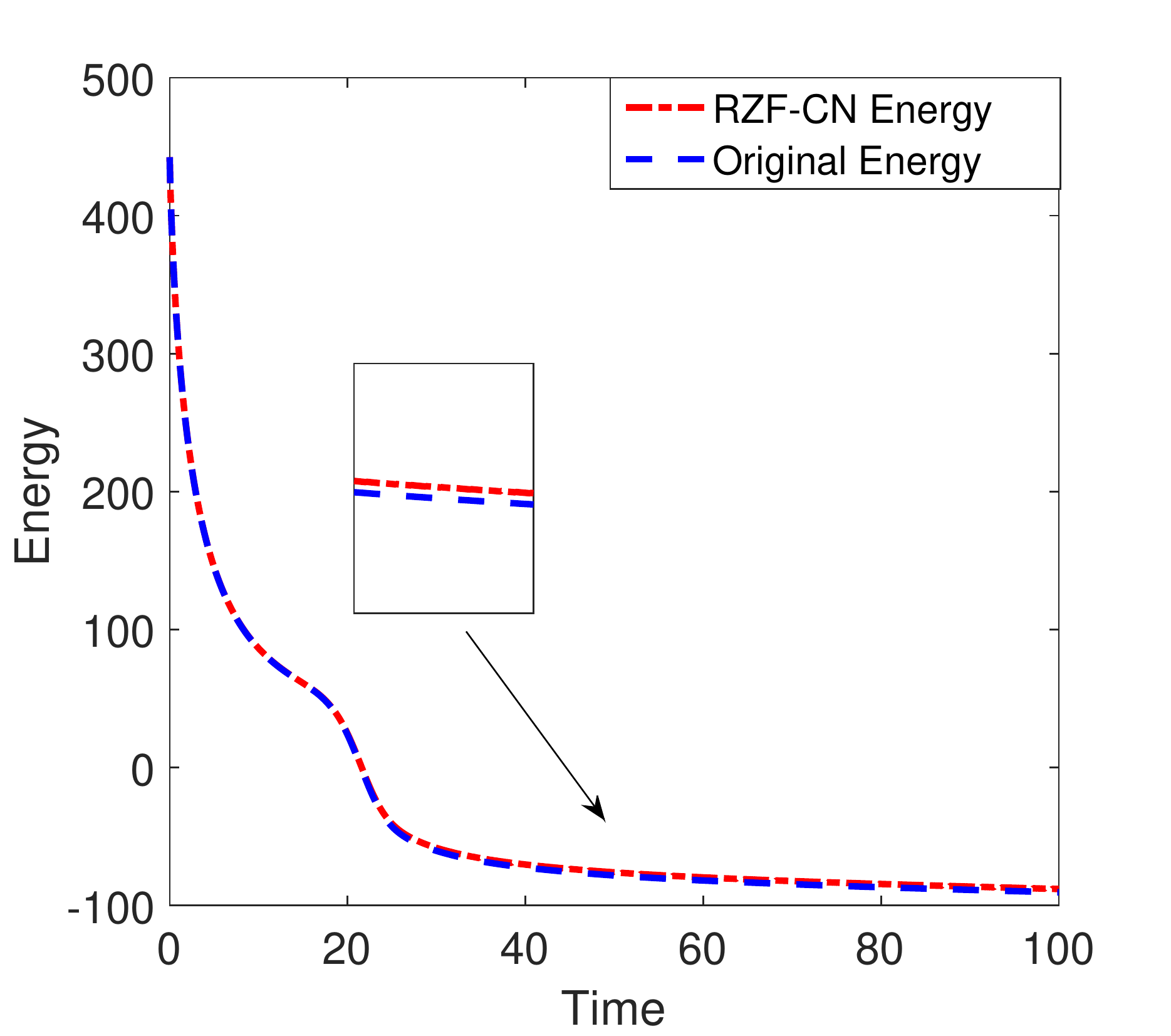}}
\subfigure[Energy evolution with $\Omega=\text{[}0,200{]}^2$ and $\widehat{\phi}_0=0$]{
\includegraphics[width=5cm,height=5cm]{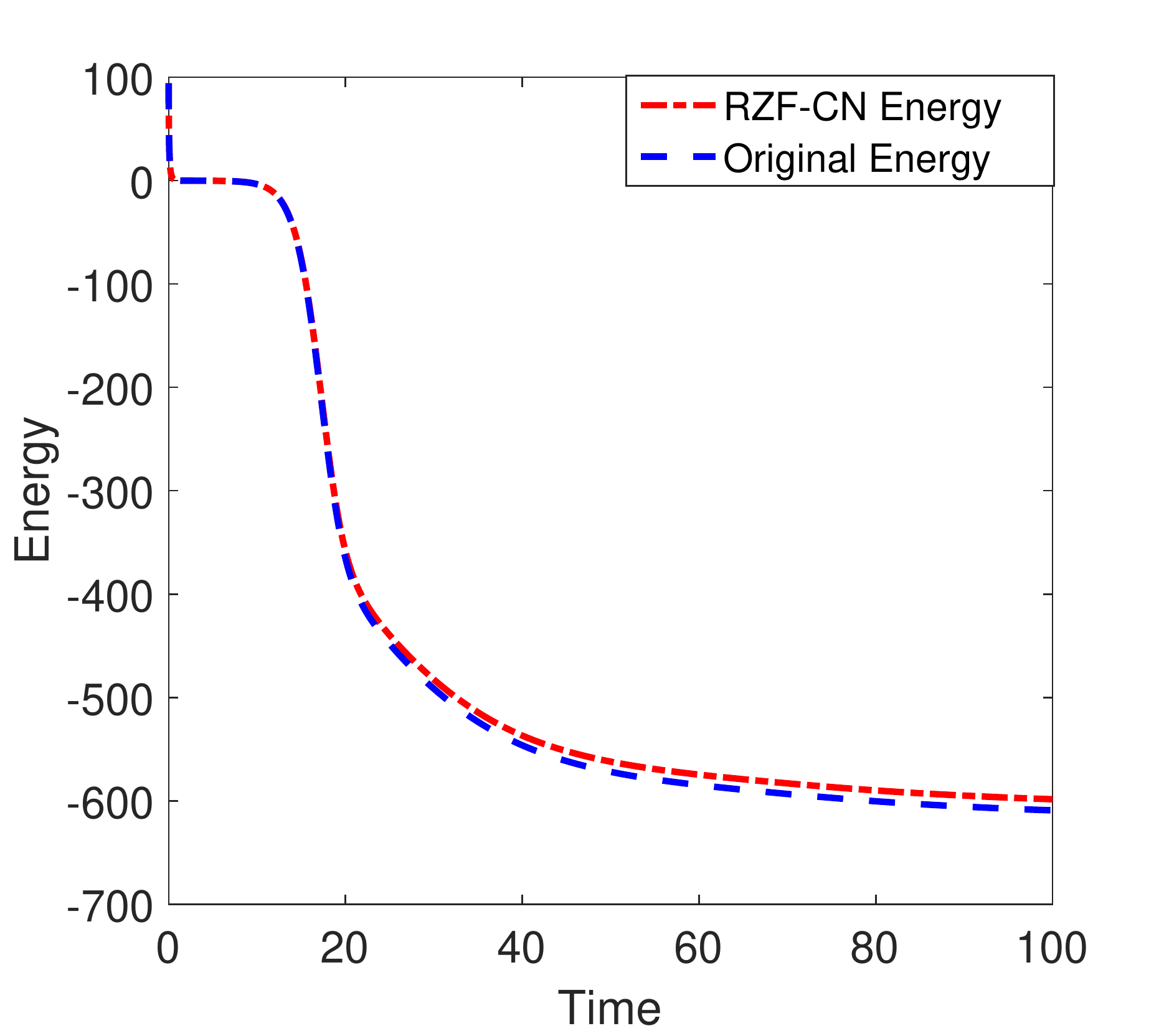}}
\subfigure[Energy evolution with $\Omega=\text{[}0,200{]}^2$ and $\widehat{\phi}_0=0.2$]{
\includegraphics[width=5cm,height=5cm]{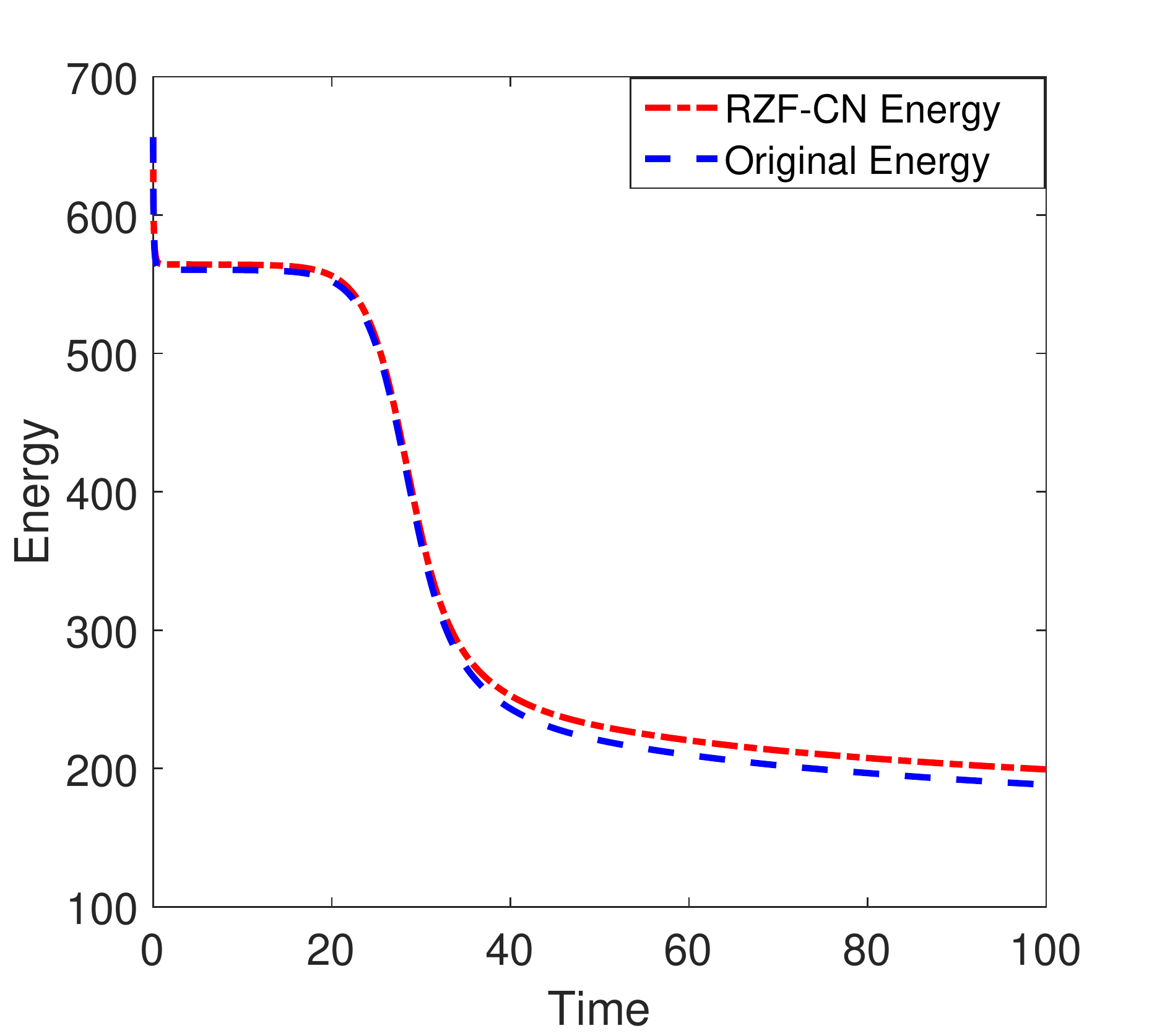}}
\caption{(a) Energy of $\mathcal{\widetilde{E}}(\phi)$ and $E(\phi)$ with $\Omega=[0,100]^2$ and $\widehat{\phi}_0=0.1$, (b) Energy of $\mathcal{\widetilde{E}}(\phi)$ and $E(\phi)$ with $\Omega=[0,200]^2$ and $\widehat{\phi}_0=0$, (c) Energy of $\mathcal{\widetilde{E}}(\phi)$ and $E(\phi)$ with $\Omega=[0,200]^2$ and $\widehat{\phi}_0=0.2$.}\label{fig:fig8}
\end{figure}
\section*{Acknowledgement}
No potential conflict of interest was reported by the author. We would like to acknowledge the assistance of volunteers in putting together this example manuscript and supplement.
\bibliographystyle{siamplain}
\bibliography{Reference}

\end{document}